\newtheorem{theorem}{Theorem}[section]
\newtheorem*{rep@theorem}{\rep@title}
\newcommand{\newreptheorem}[2]{%
\newenvironment{rep#1}[1]{%
 \def\rep@title{#2 \ref{##1}}%
 \begin{rep@theorem}}%
 {\end{rep@theorem}}}
\newtheorem{lemma}[theorem]{Lemma}
\newtheorem{proposition}[theorem]{Proposition}
\theoremstyle{remark}
\newtheorem{remark}[theorem]{Remark}
\theoremstyle{definition}
\newtheorem{definition}[theorem]{Definition}
\newcommand{\R}{\mathcal{R}}
\newcommand{\x}{\mathbf{x}}
\newcommand{\1}{\mathbf{1}}
\newcommand{\A}{{\cal A}}
\newcommand{\CC}{\mathbb{C}}
\newcommand{\FF}{\mathbb{F}}
\newcommand{\NN}{{\mathbb{N}}}
\newcommand{\rk}{\textrm{rk}}
\newcommand{\RR}{\mathbb{R}}
\renewcommand{\SS}{\mathbb{S}}
\newcommand{\T}{\mathcal{T}}
\newcommand{\ZZ}{\mathbb{Z}}
\renewcommand{\span}{\textrm{span}}
\newcommand{\Hom}{\textrm{Hom}}
\renewcommand{\Im}{\textrm{Im }}
\newcommand{\blue}{\textcolor{blue}}
\begin{document}

\title{\textsf{The arithmetic Tutte polynomials \\
of the classical root systems.}}

\author{\textsf{Federico Ardila\footnote{\textsf{San Francisco State University, San Francisco, CA, USA and Universidad de Los Andes, Bogot\'a, Colombia. federico@sfsu.edu}}
 \qquad Federico Castillo\footnote{\textsf{Universidad de Los Andes, Bogot\'a, Colombia, and University of California, Davis, CA, USA. fcastillo@math.ucdavis.edu}}
 \qquad Michael Henley\footnote{\textsf{San Francisco State University, San Francisco, CA, USA. mhenley@sfsu.edu \newline This research was partially supported by the United States National Science Foundation CAREER Award DMS-0956178 (Ardila), the SFSU Math Department's National Science Foundation $\mathsf{(CM)^2}$ Grant DGE-0841164 (Henley), and  the SFSU-Colombia Combinatorics Initiative. This paper includes work from the second author's Los Andes undergraduate reseatch project and the third author's SFSU Master's thesis, carried out under the supervision of the first author.}}}}

\date{}

\maketitle

\begin{abstract}
Many combinatorial and topological invariants of a hyperplane arrangement can be computed in terms of its Tutte polynomial. Similarly, many invariants of a hypertoric arrangement can be computed in terms of its \emph{arithmetic} Tutte polynomial. 

We compute the arithmetic Tutte polynomials of the classical root systems $A_n, B_n, C_n,$ and $D_n$ with respect to their integer, root, and weight lattices. We do it in two ways: by introducing a \emph{finite field method} for arithmetic Tutte polynomials, and by enumerating signed graphs with respect to six parameters.
\end{abstract}

\noindent

\section{\textsf{Introduction}}

There are numerous constructions in mathematics which associate a combinatorial, algebraic, geometric, or topological object to a list of vectors $A$. It is often the case that important invariants of those objects (such as their size, dimension, Hilbert series, Betti numbers) can be computed directly from the \emph{matroid} of $A$, which only keeps track of the linear dependence relations between vectors in $A$. Sometimes such invariants depend only on the (arithmetic) Tutte polynomial of $A$, a two-variable polynomial defined below.

It is therefore of great interest to compute (arithmetic) Tutte polynomials of vector configurations. The first author \cite{Ardila} and Welsh and Whittle \cite{WW}
gave a \emph{finite-field method} for computing Tutte polynomials. In this paper we present an analogous method for computing arithmetic Tutte polynomials, which was also discovered by Br\"anden and Moci \cite{BM}. We cannot expect miracles from this method; computing Tutte polynomials is \#P-hard in general \cite{Welsh} and we cannot overcome that difficulty. However, this finite field method is extremely successful when applied to some vector configurations of interest.

Arguably the most important vector configurations in mathematics are the irreducible root systems, which play a fundamental role in many fields. The first author \cite{Ardila} used the finite field method to compute the Tutte polynomial of the classical root systems $\Phi = A_n, B_n, C_n, D_n$. De Concini and Procesi \cite{DPzonotope} and Geldon \cite{Geldon} computed it for the remaining root systems $E_6, E_7, E_8, F_4, G_2$. 

The main goal of this paper is to compute the \textbf{arithmetic} Tutte polynomial of the classical root systems $\Phi = A_n, B_n, C_n, D_n$. In doing so, we obtain combinatorial formulas for various quantities of interest, such as:
\\
\noindent $\bullet$ The volume and number of (interior) lattice points, of the zonotopes $Z(\Phi)$.
\\
\noindent $\bullet$ Various invariants associated to the hypertoric  arrangement $\T(\Phi)$ in a compact, complex, or finite torus.
\\
\noindent $\bullet$ 
The dimension of the Dahmen-Micchelli space $DM(\Phi)$ from numerical analysis.
\\
\noindent $\bullet$ The dimension of the De Concini-Procesi-Vergne space $DPV(\Phi)$ coming from index theory. 

Our results extend, recover, and in some cases simplify formulas of De Concini and Procesi \cite{DPzonotope}, Moci \cite{Mociroot}, and Stanley \cite{Stanleyzonotope} for some of these quantities.

Our formulas are given in terms of the \emph{deformed exponential function} of \cite{Sokal}, which is the following evaluation of the three variable Rogers-Ramanujan function:
\[
F(\alpha, \beta) = \sum_{n \geq 0} \frac{\alpha^n \, \beta^{n \choose 2}}{n!}.
\]
This function has been widely studied in complex analysis \cite{Langley, Liu, Morris} and statistical mechanics \cite{SS, SS2, Sokal}.

As a corollary, we obtain simple formulas for the characteristic polynomials of the classical root systems. In particular, we discover a surprising connection between the arithmetic characteristic polynomial of the root system $A_n$ and the enumeration of cyclic necklaces.

After introducing our finite-field method in Section \ref{sec:finitefield}, we obtain our formulas in two independent ways. In Section \ref{sec:compute1}, we apply our finite-field method to the classical root systems, reducing the computation to various enumerative problems over finite fields. In Section \ref{sec:compute2} we compute the desired Tutte polynomials by carrying out a detailed enumeration of (signed) graphs with respect to six different parameters. This enumeration may be of independent interest.
In Section \ref{sec:1var} we compute the arithmetic characteristic polynomials, and in Section \ref{sec:computations} we present a number of examples.

\subsection{\textsf{Preliminaries}}

\subsubsection{\textsf{Tutte polynomials and hyperplane arrangements}}

Given a vector configuration $A$ in a vector space $V$ over a field $\FF$, the \emph{Tutte polynomial} of $A$ is defined to be
\[
T_{A}(x,y)= \sum_{B \subseteq {A}} (x-1)^{r(A)-r(B)}(y-1)^{|B|-r(B)}
\]
where, for each $B \subseteq X$, the \emph{rank} of $B$ is $r(B) = \dim \span B$. The Tutte polynomial carries a tremendous amount of information about $A$. Three prototypical theorems are the following.

Let $V^*=\Hom(V,\FF)$ be the dual space of linear functionals from $V$ to $\FF$. Each vector $a \in A$ determines a normal hyperplane
\[
H_a = \{x \in V^* \, : \, x(a) = 0\}
\]
Let 
\[
\A(A) = \{H_a \, : \, a \in A\}, \qquad V(A) = V \, \setminus \bigcup_{H \in \A(A)} H
\]
be the \emph{hyperplane arrangement} of $A$ and its complement.
There is little harm in thinking of $\A(A)$ as the arrangement of hyperplanes perpendicular to the vectors of $A$, but the more precise definition will be useful in the next section.

\begin{theorem}\blue{$(\FF = \RR)$} (Zaslavsky)
\cite{Zaslavsky}
Let $\A(A)$ be a \textbf{real} hyperplane arrangement in $\RR^n$. The complement $V(A)$ consists of $|T_A(2,0)|$ regions.
\end{theorem}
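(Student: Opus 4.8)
The plan is to prove the identity by induction on $|A|$, checking that the number of regions $\rho(\mathcal A(A))$ and the number $T_A(2,0)$ satisfy the same deletion–contraction recursion. Throughout I assume $A$ has no zero vectors, so that each $H_a$ is a genuine hyperplane and the matroid of $A$ is loopless (automatic for root systems). The base case $A=\emptyset$ is immediate: $\mathcal A(A)$ is empty, $V(A)=V^*$ is a single region, and $T_\emptyset(2,0)=1$.

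For the inductive step fix $a\in A$ and set $A'=A\setminus a$. If $a$ is parallel to some $b\in A'$, then $\mathcal A(A)=\mathcal A(A')$ as sets of hyperplanes, so $\rho(\mathcal A(A))=\rho(\mathcal A(A'))$; on the algebraic side $a$ is neither a loop nor a coloop, so $T_A=T_{A'}+T_{A/a}$, and since $b$ becomes a loop in $A/a$ we have $T_{A/a}(x,0)=0$, whence $T_A(2,0)=T_{A'}(2,0)$, and the inductive hypothesis for $A'$ closes this case. If instead $a$ is parallel to no vector of $A$, then $\mathcal A(A')$ is obtained from $\mathcal A(A)$ by deleting exactly the hyperplane $H_a$. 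The geometric heart of the argument is the recursion
\[
\rho\bigl(\mathcal A(A)\bigr)=\rho\bigl(\mathcal A(A')\bigr)+\rho\bigl(\mathcal A(A)^{H_a}\bigr),
\]
where $\mathcal A(A)^{H_a}=\{H_a\cap H_b:b\in A'\}$ is the arrangement induced on $H_a$: each region of $\mathcal A(A')$ is open and convex, hence either disjoint from $H_a$ — in which case it is a region of $\mathcal A(A)$ — or cut by $H_a$ into exactly two regions of $\mathcal A(A)$; moreover $R\mapsto R\cap H_a$ is a bijection from the regions of $\mathcal A(A')$ meeting $H_a$ onto the regions of $\mathcal A(A)^{H_a}$. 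Under the identification $H_a=(V/\langle a\rangle)^*$, the induced arrangement $\mathcal A(A)^{H_a}$ is precisely the hyperplane arrangement $\mathcal A(A/a)$ of the contraction, which is loopless on $|A|-1$ vectors. Algebraically, $T_A(2,0)=T_{A'}(2,0)+T_{A/a}(2,0)$: for $a$ neither a loop nor a coloop this is the identity $T_A=T_{A'}+T_{A/a}$ at $(2,0)$, and for $a$ a coloop both sides equal $2\,T_{A'}(2,0)$, since then $T_A=x\,T_{A'}$ and $T_{A/a}=T_{A'}$. The inductive hypothesis for $A'$ and $A/a$ now completes the induction, and the absolute value in the statement is harmless because $T_A(2,0)$ is already a nonnegative integer.

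The main obstacle is the geometric recursion just displayed — that $H_a$ splits precisely the regions it passes through, and that these correspond bijectively to the regions of the arrangement $H_a$ inherits — together with the bookkeeping it requires: aligning matroid contraction with the geometric restriction to $H_a$ through the $V$/$V^*$ duality, and correctly absorbing repeated hyperplanes, which is why the parallel case had to be dealt with separately. The remaining ingredients are routine small matroid identities.
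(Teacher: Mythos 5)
Your proof is correct, but note that the paper does not actually prove this theorem: it is stated as background and cited directly to Zaslavsky's memoir, so there is no in-paper argument to compare against. Your deletion--restriction argument is the standard modern proof (essentially the one in Stanley's lecture notes on hyperplane arrangements), recast in the language of vector configurations so that matroid contraction $A/a$ lines up with the geometric restriction to $H_a$ via $(V/\langle a\rangle)^*\cong H_a$. Zaslavsky's original 1975 argument instead works through the M\"obius function of the intersection lattice and the formula $r(\mathcal A)=(-1)^{r}\chi_{\mathcal A}(-1)$, which your proof bypasses by checking the same deletion--contraction recursion on both sides.

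A few points you handle correctly that are worth flagging because they are the places a sketch of this proof usually goes wrong. The parallel case must indeed be split off, since the Tutte deletion--contraction identity counts $a$ and $b$ as distinct matroid elements while $\mathcal A(A)$ collapses $H_a=H_b$ to one hyperplane; your observation that $b$ becomes a loop in $A/a$, forcing $T_{A/a}(x,0)=0$, is exactly the right fix. The coloop case has to be treated separately on the algebraic side because $T_A=T_{A'}+T_{A/a}$ fails for coloops, and you correctly note that $T_A=x\,T_{A'}$ together with $T_{A/a}=T_{A'}$ still gives the additive identity at $(2,0)$; geometrically the deletion--restriction recursion for region counts is insensitive to whether $a$ is a coloop, so no case split is needed there. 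Finally, your inductive hypothesis requires the smaller configurations to be loopless, and in the non-parallel case $A/a$ is indeed loopless precisely because no $b\in A'$ is parallel to $a$, so the induction closes. The only cosmetic remark: since $T_A$ has nonnegative coefficients, $|T_A(2,0)|=T_A(2,0)$ as you say, so the absolute value in the statement is redundant for hyperplane arrangements and is presumably there only for uniformity with the characteristic-polynomial formulation $r=(-1)^{r}\chi(-1)$.
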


\begin{theorem}\blue{$(\FF = \CC)$} (Goresky-MacPherson, Orlik-Solomon)
\cite{GM, OS}
Let $\A(A)$ be a \textbf{complex} hyperplane arrangement in $\CC^n$. The cohomology ring of the complement $V(A)$ has Poincar\'e polynomial
\[
\sum_{k \geq 0} \mathrm{rank } \, H^k(V(A), \ZZ) q^k = (-1)^r q^{n-r} T_A(1-q,0).
\]
\end{theorem}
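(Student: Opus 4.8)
The plan is to decouple the statement into a topological core and a combinatorial tail: import the structural results of \cite{GM, OS} for the core, and reprove the identification with $T_A$ by hand. Throughout, let $L=L(\A(A))$ be the poset of intersections $\bigcap_{a\in S}H_a$, $S\subseteq A$, ordered by reverse inclusion, with bottom element $\widehat 0=\CC^n$; this is a geometric (semi)lattice, and for a flat $X$ the corank $r(X):=n-\dim X$ equals the rank $r(S)$ of any $S\subseteq A$ with $\bigcap_{a\in S}H_a=X$ (since $\dim X=n-\dim\span S$). Here $r:=r(A)$ is the rank of the arrangement and $n$ the dimension of the ambient space. I would first reduce the theorem to two assertions: \textbf{(T)} each $H^k(V(A);\ZZ)$ is a free abelian group and $\sum_{k\ge 0}\mathrm{rank}\,H^k(V(A);\ZZ)\,q^k=\sum_{X\in L}\mu(\widehat 0,X)(-q)^{r(X)}$; and \textbf{(C)} the closed form $\sum_{X\in L}\mu(\widehat 0,X)(-q)^{r(X)}=(-1)^{r}q^{\,n-r}T_A(1-q,0)$, which is a statement purely about the geometric lattice $L$ and the corank--nullity polynomial.

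For \textbf{(T)} I would follow the Arnol'd--Brieskorn--Orlik--Solomon line. Each $H=H_a$ carries the closed logarithmic $1$-form $\omega_H=\frac{1}{2\pi i}\,\frac{da}{a}$, with $a\in A$ read as a linear functional on $\CC^n$; Brieskorn's theorem says these classes generate $H^*(V(A);\ZZ)$ and that there is a natural decomposition $H^k(V(A);\ZZ)\cong\bigoplus_{X\in L,\ r(X)=k}H^k\bigl(V(A_X)\bigr)$, where $A_X=\{a\in A:X\subseteq H_a\}$. This reduces each Betti number to the top cohomology of an essential rank-$k$ arrangement, which is free of rank $(-1)^{k}\mu(\widehat 0,X)=|\mu(\widehat 0,X)|$; summing over $X$ gives the Poincar\'e polynomial in \textbf{(T)}. (The additional content of \cite{OS} --- not needed for the displayed identity, but matching the theorem's reference to the cohomology \emph{ring} --- is that the evident surjection onto $H^*(V(A);\ZZ)$ from the Orlik--Solomon algebra $OS(A)=\bigwedge\bigl(\bigoplus_a\ZZ\,e_{H_a}\bigr)/(\text{circuit boundaries})$ is an isomorphism, which holds because the broken-circuit basis shows $OS(A)$ has exactly the graded ranks just computed.) Alternatively one may invoke the Goresky--MacPherson formula, which writes the integral homology of $V(A)$ as a direct sum, over $X\in L\setminus\{\widehat 0\}$, of shifts of the reduced cohomology of the order complexes $\Delta(\widehat 0,X)$ of the open intervals, arranged so that a flat of corank $k$ contributes in degree $k$; since $L$ is geometric, $\Delta(\widehat 0,X)$ is homotopy equivalent to a wedge of $|\mu(\widehat 0,X)|$ spheres of dimension $r(X)-2$, and summing reproduces the same Betti numbers. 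Establishing that the integral cohomology is combinatorial and pinning its ranks to M\"obius numbers is the substance of \cite{GM, OS}; this is the step I would import, and it is the main obstacle.

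Granting \textbf{(T)}, claim \textbf{(C)} is routine bookkeeping. Set $\chi_\A(t):=\sum_{X\in L}\mu(\widehat 0,X)\,t^{\dim X}=\sum_X\mu(\widehat 0,X)\,t^{\,n-r(X)}$; then $\sum_X\mu(\widehat 0,X)(-q)^{r(X)}=(-q)^n\chi_\A(-1/q)$ by direct substitution. Whitney's theorem --- the crosscut form of M\"obius inversion in the geometric lattice $L$ --- rewrites $\chi_\A(t)=\sum_{S\subseteq A}(-1)^{|S|}t^{\,n-r(S)}$, and since $t^{\,n-r}$ divides every summand this equals $t^{\,n-r}\sum_{S}(-1)^{|S|}t^{\,r-r(S)}$. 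Comparing with the $y=0$ specialization of the corank--nullity form $T_A(x,y)=\sum_{S\subseteq A}(x-1)^{\,r-r(S)}(y-1)^{\,|S|-r(S)}$ at $x=1-t$ identifies $\chi_\A(t)$ with $(-1)^{r}t^{\,n-r}T_A(1-t,0)$ after collecting the powers of $-1$; inserting this into $(-q)^n\chi_\A(-1/q)$ and simplifying gives the displayed formula. As a cross-check, one could instead argue by induction: the Orlik--Solomon long exact sequence of a triple (arrangement, deletion, restriction) gives $P_\A(q)=P_{\A'}(q)+q\,P_{\A''}(q)$ for the corresponding Poincar\'e polynomials, which is precisely the deletion--contraction recursion satisfied by the right-hand side, with the empty arrangement in $\CC^n$ as base case. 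None of this tail is delicate; the only genuine input is the topology in \textbf{(T)}.
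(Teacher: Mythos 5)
Your decomposition into a topological core (T) and a combinatorial tail (C) is the right strategy, and (T) is the standard and correct appeal to Brieskorn/Orlik--Solomon or Goresky--MacPherson. The gap is in (C). You correctly establish $\sum_{X}\mu(\widehat 0,X)(-q)^{r(X)} = (-q)^n\chi_\A(-1/q)$ and $\chi_\A(t) = (-1)^r t^{n-r}T_A(1-t,0)$, but ``inserting and simplifying'' does not produce the displayed formula: carrying out the substitution gives
\[
(-q)^n\chi_\A(-1/q) \;=\; (-q)^n(-1)^r(-1/q)^{n-r}\,T_A(1+1/q,0) \;=\; q^{r}\,T_A(1+1/q,0),
\]
which is not $(-1)^r q^{n-r}T_A(1-q,0)$. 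Try $A=\{e_1\}\subset\CC^1$: the complement $\CC^*$ has Poincar\'e polynomial $1+q = q\cdot T_A(1+1/q,0)$, whereas $(-1)^1 q^0\,T_A(1-q,0)=q-1$. In fact $(-1)^r q^{n-r}T_A(1-q,0)$ is precisely the characteristic polynomial $\chi_A(q)$, which by the paper's Theorem~\ref{thm:finfieldhyps} equals $|V(A)|$ over a finite field $\FF_q$ --- a different evaluation of $T_A$ from the Poincar\'e polynomial, and the two are related only through the inversion $q\mapsto -1/q$ you already wrote down. So the identity asserted in (C) is false, and the theorem statement as printed appears to carry a transcription error: the intended right-hand side should be $q^r T_A(1+1/q,0)$, equivalently $(-q)^n\chi_A(-1/q)$. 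The paper cites rather than proves this theorem, so there is no internal proof to compare against; your (T) and your intermediate combinatorial identities are correct and, assembled without the erroneous last simplification, would prove the corrected statement.
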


\begin{theorem}\label{thm:finfieldhyps}\blue{($\FF = \FF_q$: Finite field method)} (Crapo-Rota, Athanasiadis, Ardila, Welsh-Whittle)
\cite{Ardila, Athanasiadis, CrapoRota, WW}
Let $\A(A)$ be a hyperplane arrangement in $\FF_q^n$ where $\FF_q$ is the finite field of $q$ elements for a prime power $q$. Then the complement $V(A)$ has size
\[
| V(A) | = (-1)^r q^{n-r} T_A(1-q,0)
\]
and, furthermore,
\[
\sum_{p \in \FF_q^n} t^{h(p)} = (t-1)^r q^{n-r} T_A \left( \frac{q+t-1}{t-1}, t \right)
\]
where $h(p)$ is the number of hyperplanes of $\A(A)$ that $p$ lies on. 
\end{theorem}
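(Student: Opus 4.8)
The plan is to evaluate $\sum_{p\in\FF_q^n} t^{h(p)}$ by expanding a product over the vectors of $A$ into a sum over subsets $B\subseteq A$, count the points of $\bigcap_{a\in B}H_a$ by a dimension argument, and then recognize the answer directly from the corank--nullity definition of $T_A$ given above. The first identity will fall out as the specialization $t=0$ of the second, so there is really only one computation to do.

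Here are the steps. Fix $p\in\FF_q^n$ and write, using the $\{0,1\}$-valued indicator $\mathbf{1}[p\in H_a]$,
\[
t^{h(p)}=\prod_{a\in A}t^{\mathbf{1}[p\in H_a]}=\prod_{a\in A}\bigl(1+(t-1)\,\mathbf{1}[p\in H_a]\bigr).
\]
Summing over all $p\in\FF_q^n$ and expanding the product, the resulting terms are indexed by subsets $B\subseteq A$:
\[
\sum_{p\in\FF_q^n}t^{h(p)}=\sum_{B\subseteq A}(t-1)^{|B|}\,\Bigl|\textstyle\bigcap_{a\in B}H_a\Bigr|.
\]
Now $\bigcap_{a\in B}H_a$ is the kernel of the linear map $\FF_q^n\to\FF_q^B$, $p\mapsto(p(a))_{a\in B}$; the image of this map has dimension $\dim\span B=r(B)$ (it is the image of the dual of the inclusion $\span B\hookrightarrow V$), so the kernel has dimension $n-r(B)$ and hence $q^{\,n-r(B)}$ points. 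Therefore
\[
\sum_{p\in\FF_q^n}t^{h(p)}=\sum_{B\subseteq A}(t-1)^{|B|}\,q^{\,n-r(B)}.
\]

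Next I would match this with $T_A(x,y)=\sum_{B\subseteq A}(x-1)^{r-r(B)}(y-1)^{|B|-r(B)}$, where $r=r(A)$. Putting $x=\frac{q+t-1}{t-1}$ gives $x-1=\frac{q}{t-1}$, and with $y=t$ one has $y-1=t-1$, so
\[
(t-1)^{r}q^{\,n-r}\,T_A\!\left(\tfrac{q+t-1}{t-1},\,t\right)=\sum_{B\subseteq A}(t-1)^{r}q^{\,n-r}\Bigl(\tfrac{q}{t-1}\Bigr)^{r-r(B)}(t-1)^{|B|-r(B)};
\]
collecting the powers of $q$ and of $t-1$ in each summand collapses the right-hand side to $\sum_{B}(t-1)^{|B|}q^{\,n-r(B)}$, exactly what we computed, which proves the second identity. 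Finally, specialize to $t=0$ with the convention $0^0=1$: every term $t^{h(p)}$ with $h(p)>0$ vanishes, so the left side becomes $\#\{p:h(p)=0\}=|V(A)|$, while $\frac{q+t-1}{t-1}\big|_{t=0}=1-q$, yielding $|V(A)|=(-1)^{r}q^{\,n-r}T_A(1-q,0)$.

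The argument is short and elementary, so there is no serious obstacle; the one point that deserves care is the dimension count $\dim\bigcap_{a\in B}H_a=n-r(B)$ --- equivalently, that the matroid rank of $B$ equals the codimension of the corresponding hyperplane intersection --- a standard linear-algebra fact valid over any field, and precisely what lets the finite-field count be repackaged into the Tutte polynomial. The only other thing to verify is the exponent bookkeeping in the substitution step, which is routine.
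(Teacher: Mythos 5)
Your proof is correct and uses the standard finite-field method. The paper does not actually prove Theorem \ref{thm:finfieldhyps} (it cites Crapo-Rota, Athanasiadis, Ardila, and Welsh-Whittle), but its proof of the arithmetic analogue, Theorem \ref{thm:finfield}, is exactly your argument --- binomial expansion $(1+(t-1))^{h(p)}$ indexed by subsets $B\subseteq A$, exchange of the two sums, the intersection count $|\bigcap_{a\in B}H_a|=q^{n-r(B)}$ (there upgraded to $m(B)q^{n-r(B)}$), and matching against the corank--nullity form of the Tutte polynomial --- merely written with the chain of equalities run in the opposite direction.
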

The first statement was proved by Crapo and Rota \cite{CrapoRota}, and Athanasiadis \cite{Athanasiadis} used it to compute the characteristic polynomial 
$(-1)^r q^{n-r}T_A(1-q,0)$ of various arrangements $A$ of interest. These results were extended to Tutte polynomials by the first author \cite{Ardila} and by Welsh and Whittle \cite{WW}.

\subsubsection{\textsf{Arithmetic Tutte polynomials and hypertoric arrangements}}

If our vector configuration $A$ lives \textbf{in a lattice} $\Lambda$, then the \emph{arithmetic Tutte polynomial} is
\[
M_{A}(x,y)= \sum_{B \subseteq {A}} m(B)(x-1)^{r(A)-r(B)}(y-1)^{|B|-r(B)}
\]
where, for each $B \subseteq A$, the \emph{multiplicity} $m(B)$ of $B$ is the index of $\ZZ B$ as a sublattice of $\span B \cap \Lambda$. The arithmetic Tutte polynomial also carries a great amount of information about $A$, but it does so in the context of \emph{toric arrangements}.

\begin{definition} Let $T=\Hom(\Lambda,\FF^*)$ be the character group, consisting of the group homomorphisms from $\Lambda$ to the multiplicative group $\FF^*=\FF\backslash \{0\}$ of the field $\FF$. We might also consider the unitary characters $T=\Hom(\Lambda,\SS^1)$ where $\SS^1$ is the unit circle in $\CC$. It is easy to check that $T$ is isomorphic to $\FF^*$ and to $\SS^1$, respectively. 

Each element $a \in A$ determines a hypertorus
\[
T_a = \{t \in T \, : \, t(a) = 1\}
\]
in $T$. For instance $a=(2,-3,5)$ gives the hypertorus $x^2y^{-3}z^5=1$. Let 
\[
\T(A) = \{T_a \, : \, a \in A\}, \qquad R(A) = T \, \setminus \bigcup_{T \in \T(A)} T
\]
be the \emph{toric arrangement} of $A$ and its complement, respectively.
\end{definition}


\begin{theorem}\cite{ERS, MociTutte} 
\blue{$(\FF^* = \SS^1)$} (Moci) Let $\T(A)$ be a \textbf{real} toric arrangement in the compact torus $T \cong (\SS^1)^n$. Then $\R(A)$ consists of $|M_A(1,0)|$ regions.
\end{theorem}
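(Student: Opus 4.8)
This is the toric analogue of Zaslavsky's theorem \cite{Zaslavsky}, and the plan is to compute the Euler characteristic with compact supports $\chi_c(\R(A))$ in two different ways. First I would reduce to the \emph{essential} case, where $A$ spans $\Lambda\otimes\RR$: if it does not, then restricting characters to the sublattice $\span A\cap\Lambda$ identifies $\R(A)$ with $\R(A')\times(\SS^1)^{n-r(A)}$, where $A'$ denotes $A$ viewed inside $\span A\cap\Lambda$ (so that $A'$ is essential); since the torus factor is connected, $\R(A)$ and $\R(A')$ have the same number of regions, and $M_{A'}=M_A$ because the multiplicity of a subset $B$ only involves $\span B\cap\Lambda$, which is unchanged.

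So assume $A$ is essential. Pulling the arrangement back along the universal cover $\RR^n\to T\cong\RR^n/\ZZ^n$, each hypertorus $T_a$ lifts to the family of parallel hyperplanes $\{x:\langle a,x\rangle\in\ZZ\}$, so $\R(A)$ lifts to the complement of a $\ZZ^n$-periodic affine hyperplane arrangement whose regions are \emph{bounded} open convex polytopes (bounded because $A$ spans). The covering map is injective on any such region $P$: if $x$ and $x+\lambda$ both lay in $P$ for some nonzero $\lambda\in\ZZ^n$, then, since translation by $\lambda$ permutes regions and carries $P$ to $P+\lambda$, we would get $P=P+\lambda$, which is impossible. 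Hence each region of $\R(A)$ is homeomorphic to a bounded open convex $n$-polytope; in particular there are finitely many of them, and, since $\chi_c$ takes the value $(-1)^n$ on such a polytope, $\chi_c(\R(A))=(-1)^n\cdot(\text{number of regions})$.

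For the second computation, additivity of $\chi_c$ and inclusion--exclusion give $\chi_c(\R(A))=\sum_{B\subseteq A}(-1)^{|B|}\chi_c\bigl(\bigcap_{a\in B}T_a\bigr)$. Here $\bigcap_{a\in B}T_a=\{t\in T:t(a)=1\text{ for all }a\in B\}$ is a closed subgroup, a disjoint union of subtori of dimension $n-r(B)$ whose number of connected components is the order of the torsion subgroup of $\Lambda/\ZZ B$, namely $[\span B\cap\Lambda:\ZZ B]=m(B)$. Since $\chi_c$ vanishes on every positive-dimensional torus, only the spanning subsets $B$ (those with $r(B)=r(A)=n$) contribute, giving $\chi_c(\R(A))=\sum_{B:\,r(B)=n}(-1)^{|B|}m(B)$. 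On the other hand the factor $(x-1)^{r(A)-r(B)}$ kills every non-spanning $B$ in $M_A(1,0)$, so $M_A(1,0)=\sum_{B:\,r(B)=n}m(B)(-1)^{|B|-n}=(-1)^n\chi_c(\R(A))$. Combining this with the first computation, the number of regions of $\R(A)$ equals $M_A(1,0)$; taking absolute values proves the theorem.

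The one step carrying real topological content is the first --- knowing that in the essential case every region of $\R(A)$ is an open $n$-cell --- and this is exactly where essentiality is indispensable (dropping it produces regions with a free torus factor, on which $\chi_c$ vanishes, and the count collapses); the covering-space argument is the crux. Everything else is the standard additivity of $\chi_c$ together with the elementary fact, essentially the definition of the multiplicity, that $\bigcap_{a\in B}T_a$ has $m(B)$ components. Carrying the dimensions $n-r(B)$ through the same inclusion--exclusion moreover yields the refined identity $\sum_{W}\mu(\widehat{0},W)\,t^{\dim W}=(-1)^n M_A(1-t,0)$ for the characteristic polynomial of the poset of layers $W$ of $\T(A)$, the arithmetic toric analogue of Whitney's formula.
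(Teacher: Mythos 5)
The paper does not prove this theorem; it is quoted as background and cited to \cite{ERS} and \cite{MociTutte}, so there is no in-paper proof to compare against. Your argument is correct and complete, and it is the Euler-characteristic proof via the poset of layers. The essentialization step is sound: $\span A\cap\Lambda$ is a saturated (hence direct-summand) sublattice, so $\R(A)\cong\R(A')\times(\SS^1)^{n-r(A)}$, and $M_{A'}=M_A$ because $m(B)$ is computed inside $\span B\cap\Lambda$. Lifting to the universal cover $\Hom(\Lambda,\RR)\to T$ to show every chamber is a bounded open convex $n$-cell is the key geometric point; boundedness needs essentiality, and injectivity of the covering on a chamber follows because the deck group $\Hom(\Lambda,\ZZ)$ permutes chambers and a bounded chamber cannot equal a nontrivial translate of itself, giving $\chi_c(\R(A))=(-1)^n\cdot(\textrm{number of chambers})$. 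The inclusion--exclusion half is routine once one observes that $\bigcap_{a\in B}T_a\cong\Hom(\Lambda/\ZZ B,\SS^1)$ is a disjoint union of exactly $m(B)$ subtori of dimension $n-r(B)$ (the torsion of $\Lambda/\ZZ B$ is precisely $(\span B\cap\Lambda)/\ZZ B$), on which $\chi_c$ vanishes unless $r(B)=n$; collecting the spanning terms recovers $M_A(1,0)$. This is the same circle of ideas as the intersection-poset computations in \cite{ERS}, while Moci's treatment in \cite{MociTutte} is organized around deletion--contraction for $M_A$; both are standard, and your account is self-contained and correct, including the closing remark identifying the refined count with the characteristic polynomial of the poset of layers.
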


\begin{theorem}\cite{DPtoric, MociTutte}
\blue{$(\FF^* = \CC^*)$} (Moci) Let $\T(A)$ be a \textbf{complex} toric arrangement in the torus $T \cong (\CC^*)^n$. The cohomology ring of the complement $\R(A)$ has Poincar\'e polynomial
\[
\sum_{k \geq 0} \mathrm{rank } \, H^k(\R(A), \ZZ) q^k = q^n M_A\left(\frac{2q+1}q,0\right)
\]
\end{theorem}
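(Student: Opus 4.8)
The plan is to reconstruct the proof of De Concini--Procesi and Moci. Throughout I assume, as holds for every root system, that $A$ spans $V$, so that $r(A)=n$; then the claimed right-hand side expands as
\[
q^n M_A\!\left(\tfrac{2q+1}{q},0\right)=\sum_{B\subseteq A}m(B)\,(-1)^{|B|-r(B)}\,q^{r(B)}(1+q)^{n-r(B)}=\sum_{k=0}^{n}q^{k}(1+q)^{n-k}\!\!\sum_{\substack{B\subseteq A\\ r(B)=k}}\!\!m(B)(-1)^{|B|-k},
\]
and the goal is to recognize this as the Poincar\'e polynomial of $\R(A)$.

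Let $\mathcal L$ be the poset of \emph{layers} of $\T(A)$ --- the connected components of the intersections $\bigcap_{a\in S}T_a$, $S\subseteq A$ --- ordered by reverse inclusion with minimum $\0=T$, with $\mu$ its M\"obius function and $\mathrm{codim}\,W=n-\dim W$. The first ingredient, which I would quote from De Concini and Procesi, is that $H^*(\R(A),\ZZ)$ is torsion-free and that, near each layer $W$, the arrangement $\T(A)$ is --- up to homeomorphism --- the product of the torus $(\CC^*)^{\dim W}$ with the complement of the central hyperplane arrangement obtained by linearizing the hypertori through $W$; hence the interval $[\0,W]$ in $\mathcal L$ is the intersection lattice of that arrangement, so $\mu(\0,W)$ has sign $(-1)^{\mathrm{codim}\,W}$, and these local models glue, via a Brieskorn-type decomposition of $H^*(\R(A))$, into
\[
\sum_{k\ge 0}\mathrm{rank}\,H^k(\R(A),\ZZ)\,q^k=\sum_{W\in\mathcal L}q^{\mathrm{codim}\,W}(1+q)^{\dim W}\,|\mu(\0,W)|.
\]
Re-deriving this from scratch would require a local-to-global argument feeding the complex Orlik--Solomon theorem quoted above into the linearizations at all the layers and tracking the outcome along the stratification of $T$ by $\mathcal L$; I expect this to be the main obstacle, largely because of the bookkeeping created by the fact that intersections of hypertori need not be connected.

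Granting that display, it suffices to establish, for each $k$, the identity
\[
\sum_{\substack{W\in\mathcal L\\ \mathrm{codim}\,W=k}}|\mu(\0,W)|=\sum_{\substack{B\subseteq A\\ r(B)=k}}m(B)\,(-1)^{|B|-k},
\]
for substituting it into the two displays above then finishes the proof, the $n+1$ polynomials $q^k(1+q)^{n-k}$ being linearly independent. Using the sign of $\mu(\0,W)$ recorded above, this identity is equivalent to the assertion that the arithmetic characteristic polynomial $\chi^{\mathrm{arith}}_A(t):=\sum_{B\subseteq A}(-1)^{|B|}m(B)\,t^{\,n-r(B)}$ equals the characteristic polynomial $\sum_{W\in\mathcal L}\mu(\0,W)\,t^{\dim W}$ of the poset $\mathcal L$. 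To prove \emph{that}, I would count points over a finite field. Choosing a prime power $q$ with $q-1$ divisible by $m(B)$ for every $B\subseteq A$, the torsion subgroup of $\Lambda/\ZZ B$ --- namely $(\span B\cap\Lambda)/\ZZ B$, of order $m(B)$ --- contributes a factor $m(B)$, so $|\Hom(\Lambda/\ZZ B,\FF_q^*)|=m(B)(q-1)^{\,n-r(B)}$; hence inclusion--exclusion over subsets shows the complement of $\T(A)$ in the finite torus $\Hom(\Lambda,\FF_q^*)$ has $\sum_{B}(-1)^{|B|}m(B)(q-1)^{\,n-r(B)}=\chi^{\mathrm{arith}}_A(q-1)$ points, while the same count organized by M\"obius inversion over $\mathcal L$ --- each layer being a coset of a subtorus, hence having exactly $(q-1)^{\dim W}$ points --- equals $\sum_{W\in\mathcal L}\mu(\0,W)(q-1)^{\dim W}$. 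As these agree for the infinitely many admissible $q$, they agree as polynomials in $t=q-1$. (This is the finite-field count for arithmetic Tutte polynomials of Section~\ref{sec:finitefield}, specialized to $y=0$.) The remaining care points are minor: one controls the finite count only along an arithmetic progression of $q$'s and concludes by polynomiality, and one should double-check the sign normalization of $\mu(\0,W)$ invoked in passing between the last two displays.
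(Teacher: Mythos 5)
The paper does not prove this theorem; it cites it to De~Concini--Procesi \cite{DPtoric} and Moci \cite{MociTutte}, so there is no in-paper proof to compare against. Your sketch is a faithful outline of the argument as it appears in those references, correctly split into two halves: (i) the layer-decomposition formula
\[
\sum_{k}\mathrm{rank}\,H^{k}(\R(A))\,q^{k}\;=\;\sum_{W\in\mathcal L}q^{\mathrm{codim}\,W}(1+q)^{\dim W}\,\bigl|\mu(\widehat{0},W)\bigr|,
\]
which you rightly identify as the hard topological input (the Brieskorn/Leray-type local-to-global step, established by De~Concini--Procesi), and (ii) the purely combinatorial identity equating $\sum_{B}(-1)^{|B|}m(B)\,t^{\,n-r(B)}$ with the characteristic polynomial $\sum_{W}\mu(\widehat{0},W)\,t^{\dim W}$ of the layer poset. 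Your finite-field proof of (ii) is exactly the mechanism of the paper's Section~\ref{sec:finitefield} (with your $q$ equal to the paper's $q+1$), and your reduction is sound: I checked the resulting numerics on $A=\{2\}\subset\ZZ$ and $A=\{1\}\subset\ZZ$ and both agree. Two caveats worth recording. First, as you acknowledge, the finite-field count determines the identity only for $q$ in an arithmetic progression, so one must observe that both sides are polynomials in $t=q-1$ and invoke Dirichlet. Second, the M\"obius inversion over $\mathcal L$ in the finite count tacitly uses that the poset of layers over $\FF_q^*$ is order-isomorphic to the poset of layers over $\CC^*$ for those good $q$; this is true but needs a word (or one can sidestep it by defining $\mathcal L$ lattice-theoretically from the pairs $(\span B\cap\Lambda,\ \text{torsion coset})$). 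Moci's original proof of (ii) in \cite{MociTutte} instead works directly with the arithmetic matroid structure of $\mathcal L$, which avoids the finite-field detour entirely; your route is a legitimate alternative in the spirit of this paper's methods, but it is not literally how the cited references argue.
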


For finite fields we prove the following result, which is also one of our main tools for computing arithmetic Tutte polynomials.

\begin{theorem}\label{thm:finfield}
\blue{($\FF^* = \FF_{q+1}^*$: Finite field method)}
Let $\T(A)$ be a toric arrangement in the torus $T \cong (\FF_{q+1}^*)^n$ where $\FF_{q+1}$ is the finite field of $q+1$ elements for a prime power $q+1$. Assume that $m(B) | q $ for all $B \subseteq A$.
Then the complement $\R(A)$ has size
\[
|\R(A) | = (-1)^r q^{n-r} M_A(1-q,0)
\]
and, furthermore,
\[
\sum_{p \in T} t^{h(p)} = (t-1)^r q^{n-r} M_A \left( \frac{q+t-1}{t-1}, t \right)
\]
where $h(p)$ is the number of hypertori of $\T(A)$ that $p$ lies on. \end{theorem}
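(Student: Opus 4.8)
The plan is to mimic the proof of the finite-field method for hyperplane arrangements (Theorem~\ref{thm:finfieldhyps}), replacing the linear-algebra counts over $\FF_q^n$ by counts of characters of $\Lambda$ into $\FF_{q+1}^*$, while carefully tracking the torsion that produces the multiplicities $m(B)$. I would prove the second, refined identity first; the first statement then follows by setting $t=0$, since $\sum_{p\in T} 0^{h(p)} = |\R(A)|$ and the right-hand side specializes to $(-1)^r q^{n-r} M_A(1-q,0)$. \emph{Step 1 (expand by subsets):} for $p \in T$, viewed as a character of $\Lambda$, we have $h(p) = |\{a \in A : p(a) = 1\}|$, so writing $t^{h(p)} = \bigl(1 + (t-1)\bigr)^{h(p)} = \sum_{B \subseteq \{a \in A \,:\, p(a) = 1\}} (t-1)^{|B|}$ and exchanging the order of summation gives
\[
\sum_{p \in T} t^{h(p)} = \sum_{B \subseteq A} (t-1)^{|B|}\, N(B), \qquad N(B) := \bigl|\{p \in T : p(b) = 1 \text{ for all } b \in B\}\bigr|.
\]

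\emph{Step 2 (evaluate $N(B)$):} a character $p \colon \Lambda \to \FF_{q+1}^*$ satisfies $p(b)=1$ for all $b\in B$ iff it kills the sublattice $\ZZ B$, i.e. iff it factors through $\Lambda / \ZZ B$, whence $N(B) = |\Hom(\Lambda / \ZZ B, \FF_{q+1}^*)|$. Since $\span B \cap \Lambda$ is a saturated sublattice of $\Lambda$ of rank $r(B)$, the structure theorem gives $\Lambda / \ZZ B \cong \ZZ^{\,n - r(B)} \oplus G_B$ where $G_B := (\span B \cap \Lambda)/\ZZ B$ is finite of order $m(B)$, by definition of the multiplicity. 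Hence $N(B) = |\FF_{q+1}^*|^{\,n - r(B)}\cdot |\Hom(G_B, \FF_{q+1}^*)| = q^{\,n - r(B)}\,|\Hom(G_B, \FF_{q+1}^*)|$. Now $\FF_{q+1}^*$ is cyclic of order $q$, and the hypothesis $m(B)\mid q$ forces every invariant factor $d$ of $G_B$ to divide $q$; since $\Hom(\ZZ/d\ZZ, \ZZ/q\ZZ) \cong \ZZ/\gcd(d,q)\ZZ = \ZZ/d\ZZ$, we get $|\Hom(G_B, \FF_{q+1}^*)| = |G_B| = m(B)$, and therefore $N(B) = m(B)\, q^{\,n - r(B)}$.

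\emph{Step 3 (match with $M_A$):} substituting Step 2 into Step 1 yields $\sum_{p\in T} t^{h(p)} = \sum_{B \subseteq A} m(B)\, q^{\,n-r(B)}\,(t-1)^{|B|}$. Writing $r = r(A)$ and plugging $x = \frac{q+t-1}{t-1}$ (so that $x-1 = \frac{q}{t-1}$) and $y=t$ into the definition of $M_A$, a short exponent bookkeeping shows that multiplying by $(t-1)^r q^{\,n-r}$ turns each summand $m(B)(x-1)^{\,r-r(B)}(y-1)^{\,|B|-r(B)}$ into exactly $m(B)\,q^{\,n-r(B)}\,(t-1)^{|B|}$; the factor $(t-1)^r$ clears the apparent poles at $t=1$, so this is a genuine polynomial identity in $t$. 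This proves the refined identity, and then, as noted above, $t=0$ recovers $|\R(A)| = (-1)^r q^{n-r} M_A(1-q,0)$.

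The main obstacle is the computation $|\Hom(G_B, \FF_{q+1}^*)| = m(B)$ in Step 2: this is the only place where the arithmetic hypothesis $m(B)\mid q$ is used, and it is precisely what makes the multiplicities appear with the correct coefficient — without it one would only get $\gcd(m(B),q)$ rather than $m(B)$, and the finite-field count would fail to see the full arithmetic Tutte polynomial. One must also take care to identify the torsion subgroup of $\Lambda/\ZZ B$ correctly as $(\span B \cap \Lambda)/\ZZ B$, which relies on $\span B \cap \Lambda$ being saturated so that the free quotient $\Lambda/(\span B \cap \Lambda) \cong \ZZ^{\,n-r(B)}$ contributes the clean factor $q^{\,n-r(B)}$.
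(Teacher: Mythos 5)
Your proof is correct and is essentially the paper's own argument: the paper first proves the counting lemma $\bigl|\bigcap_{b\in B}T_b\bigr| = m(B)\,q^{n-r(B)}$ (your Step~2, using the same structure-theorem analysis of $\Lambda/\ZZ B$ and the hypothesis $m(B)\mid q$), and then runs your Steps~1 and~3 in the reverse direction, starting from $(t-1)^r q^{n-r}M_A(\cdot)$ and ending at $\sum_p t^{h(p)}$. The only difference is the order in which the double-counting and the lemma are presented.
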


The first part of Theorem \ref{thm:finfield} is equivalent to a recent result of  Ehrenborg, Readdy, and Sloane\cite[Theorem 3.6]{ERS}. A multivariate generalization of the second part was obtained simultaneously and independently by Br\"anden and Moci \cite{BM} -- they consider target groups other than $\FF_{q+1}^*$, but for our purposes this choice will be sufficient.
 
The second statement of Theorem \ref{thm:finfield} is significantly stronger than the first because it involves two different parameters; so if we are able to compute the left hand side, we will have computed the whole arithmetic Tutte polynomial. For that reason, we regard this as a \emph{finite field method} for arithmetic Tutte polynomials.

There are several other reasons to care about the arithmetic Tutte polynomial of $A$; we refer the reader to the references for the relevant definitions.

\begin{theorem}
Let $A$ be a vector configuration in a lattice $\Lambda$.
\begin{itemize}
\item The volume of the zonotope $Z(A)$ is $M_A(1,1)$. \cite{Stanleyzonotope}.
\item The Ehrhart polynomial of the zonotope $Z(A)$ is $q^nM(1+\frac1q,1)$. \cite{Stanleyzonotope, MociEhrhart}
\item The dimension of the Dahmen-Micchelli space $DM(A)$ is $M_A(1,1)$. \cite{DPV1, MociTutte}
\item The dimension of the De Concini-Procesi-Vergne space $DPV(A)$ is $M_A(2,1)$. \cite{DPV1, Mocigeom}
\end{itemize}
\end{theorem}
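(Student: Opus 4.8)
All four statements already appear in the references cited in the theorem, so the plan is to give a single geometric argument covering the first two and to reduce the last two to the structure theory of De Concini, Procesi, and Vergne. The common first step is to record the combinatorial evaluations of $M_A$ at the three relevant points. Directly from the definition,
\[
M_A(1,1) = \sum_{B\ \text{a basis of}\ A} m(B), \qquad M_A(2,1) = \sum_{B \subseteq A\ \text{independent}} m(B),
\]
and, when $A$ has full rank $n$ in $\RR^n$,
\[
q^n M_A\!\left(1 + \tfrac1q,\, 1\right) = \sum_{B \subseteq A\ \text{independent}} m(B)\, q^{|B|},
\]
a polynomial in $q$ of degree $n$ with constant term $m(\emptyset) = 1$. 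Thus each claim reduces to identifying the quantity in question with one of these three sums, weighted by the multiplicities $m(B)$.

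The geometric heart handles the first two statements. By the classical half-open decomposition of a zonotope (Shephard; Stanley \cite{Stanleyzonotope}), the zonotope $Z(A) = \{\sum_{a\in A} t_a a : 0 \le t_a \le 1\}$ is tiled by translated half-open parallelepipeds $\Pi_B = \{\sum_{a \in B} t_a a : 0 \le t_a < 1\}$, one for each basis $B$ of $A$; allowing lower-dimensional faces yields a compatible decomposition indexed by \emph{all} independent subsets. Normalize volume so that a fundamental domain of $\Lambda$ has volume $1$. Then $\Pi_B$ for a basis $B$ has volume equal to the index $[\Lambda : \ZZ B] = m(B)$, and the half-open box $\Pi_B$ contains exactly $m(B)$ points of $\Lambda$. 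Summing over the top-dimensional tiles gives $\vol Z(A) = \sum_B m(B) = M_A(1,1)$; applying the lattice-point refinement to $qZ(A)$ --- each independent $B$ contributing $m(B)$ points per cell of $\ZZ B$, over $q^{|B|}$ relevant cells --- gives $|qZ(A) \cap \Lambda| = \sum_B m(B)\, q^{|B|} = q^n M_A(1 + \tfrac1q, 1)$, the refinement of Stanley's Ehrhart formula recorded by Moci \cite{MociEhrhart}.

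For $DM(A)$ and $DPV(A)$ I would invoke the structure theory of De Concini, Procesi, and Vergne \cite{DPV1} (see also \cite{MociTutte, Mocigeom}). Their analysis exhibits an explicit basis of the Dahmen--Micchelli space indexed by pairs $(B,p)$ where $B$ is a basis of $A$ and $p$ runs over coset representatives of $\ZZ B$ in $\Lambda$, of which there are $m(B)$; hence $\dim DM(A) = \sum_B m(B) = M_A(1,1)$. Likewise $DPV(A)$ receives a basis indexed by independent subsets of $A$ decorated analogously, giving $\dim DPV(A) = \sum_B m(B) = M_A(2,1)$. Combining with the first step completes the proof.

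The main obstacle is analytic rather than combinatorial: the $DM$ and $DPV$ identities rest on the full De Concini--Procesi--Vergne apparatus of box splines, vector partition functions, and the associated systems of difference and differential equations, which we do not reprove here. What the proof actually contributes is the dictionary --- translating each author's count (bases or independent sublists weighted by lattice index, or lattice-point counts in $Z(A)$) into an evaluation of the arithmetic Tutte polynomial --- together with a careful check that the normalizations of volume and of the multiplicity $m(B)$ are consistent across the four sources.
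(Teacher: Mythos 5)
The paper states this theorem as a compilation of results quoted directly from the cited references (\cite{Stanleyzonotope, MociEhrhart, DPV1, MociTutte, Mocigeom}) and gives no proof of its own, so there is no internal argument to compare against. Your sketch is a correct reconstruction of how those references establish the four items: the three evaluations
\[
M_A(1,1)=\sum_{B\ \mathrm{basis}}m(B),\qquad M_A(2,1)=\sum_{B\ \mathrm{indep.}}m(B),\qquad q^nM_A\!\bigl(1+\tfrac1q,1\bigr)=\sum_{B\ \mathrm{indep.}}m(B)\,q^{|B|}
\]
follow immediately from the definition of $M_A$ (the $(y-1)^{|B|-r(B)}$ factor kills dependent $B$, and the $(x-1)^{r(A)-r(B)}$ factor handles the rank defect), the half-open parallelepiped decomposition of $Z(A)$ gives the volume and Ehrhart statements exactly as you describe, and the dimension counts for $DM(A)$ and $DPV(A)$ indeed come from the De Concini--Procesi--Vergne bases indexed by (bases, coset) and (independent sets, coset) pairs. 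One small point worth being explicit about: the Ehrhart formula as written uses $q^n$, which presupposes $A$ has full rank $n$ in $\Lambda$ (as you do note); for lower-rank configurations one should replace $n$ by $r=r(A)$ and take volume and lattice points inside $\operatorname{span}A\cap\Lambda$, otherwise the constant term of $q^nM_A(1+1/q,1)$ is not $1$. Your dictionary between each author's normalization of multiplicity and the evaluation of $M_A$ is the genuinely useful contribution here, and it is sound.
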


\subsubsection{\textsf{Root systems and lattices}}

Root systems are arguably the most fundamental vector configurations in mathematics. Accordingly, they play an important role in the theory of hyperplane arrangements and toric arrangements. In fact, the construction of the arithmetic Tutte polynomial was largely motivated by this special case. \cite{DPzonotope, Mociroot}
We will pay special attention to the four infinite families of finite root systems, known as the \emph{classical root systems}:
\begin{eqnarray*}
A_{n-1} &=& \{e_i-e_j,\, : \, 1\leq i < j\leq n\} \\
B_n &=& \{e_i -  e_j, e_i + e_j \, : \,  1\leq i <  j\leq n\} \cup \{e_i \, : \, 1 \leq i \leq n\} \\
C_n &=& \{e_i -  e_j, e_i + e_j \, : \,  1\leq i <  j\leq n\} \cup \{2e_i \, : \, 1 \leq i \leq n\} \\
D_n &=& \{e_i -  e_j, e_i + e_j \, : \,  1\leq i <  j\leq n\} 
\end{eqnarray*}
Notice that we are only considering the positive roots of each root system. It is straightfoward to adapt our methods to compute the (arithmetic) Tutte polynomials of the full root systems.

We refer the reader to \cite{Bj05} or \cite{Hu90} for an introduction to root systems
and Weyl groups, and  \cite[Chapter 6]{Or92}, \cite{Sthyps} for more information on Coxeter arrangements.

The arithmetic Tutte polynomial of a vector configuration $A$ depends on the lattice where $A$ lives. For a root system $\Phi$ in $\RR^v$ there are at least three natural choices: the integer lattice $\ZZ^v$, the weight lattice $\Lambda_W$, and the root lattice $\Lambda_R$. The second is the lattice generated by the roots, while the third is the lattice generated by the fundamental weights. The root lattices and weight lattices of the classical root systems are the following \cite{FH}:
\begin{eqnarray*}
\Lambda_W(A_{n-1}) &=& \ZZ\{e_1, \ldots, e_n\} / (\varSigma \, e_i=0)  \\
\Lambda_R(A_{n-1}) &=& \{\varSigma \, a_ie_i \, : \, a_i \in \ZZ, \varSigma a_i=0\} / (\varSigma e_i=0) \\
\Lambda_W(B_n) &=& \ZZ\{e_1, \ldots, e_n, (e_1+\cdots+e_n)/2\}\\
\Lambda_R(B_n) &=& \ZZ\{e_1, \ldots, e_n\}\\
\Lambda_W(C_n) &=& \ZZ\{e_1, \ldots, e_n\}\\
\Lambda_R(C_n) &=&  \{\varSigma \, a_ie_i \, : \, a_i \in \ZZ, \varSigma a_i \textrm{ is even}\}\\
\Lambda_W(D_n) &=& \ZZ\{e_1, \ldots, e_n, (e_1+\cdots+e_n)/2\}\\
\Lambda_R(D_n) &=&  \{\varSigma \, a_ie_i \, : \, a_i \in \ZZ, \varSigma a_i \textrm{ is even}\}
\end{eqnarray*}

For example, we are considering five different 2-dimensional lattices. The weight lattice of $A_2$ is the triangular lattice, while its root lattice is an index 3 sublattice inside it. The usual square lattice $\ZZ\{e_1, e_2\}$ contains the root lattice of $C_2$ and $D_2$ as an index 2 sublattice, and it is contained in the weight lattice of $B_2$ and $D_2$ as an index 2 sublattice.


More generally, we have the following relation between the different lattices:
\begin{proposition}\cite[Lemma 23.15]{FH}
The root lattice $\Lambda_R$ is a sublattice of the weight lattice $\Lambda_W$, and the index $[\Lambda_W \, : \, \Lambda_R]$ equals the determinant $\det A_\Phi$ of the Cartan matrix. For the classical root systems, this is:
\[
\det A_{n-1} = n, \qquad 
\det B_n = 2, \qquad
\det C_n = 2, \qquad
\det D_n = 4 
\]
where the last formula holds only for $n \geq 3$.
\end{proposition}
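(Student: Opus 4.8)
The plan is to prove the two general assertions — $\Lambda_R \subseteq \Lambda_W$ and $[\Lambda_W : \Lambda_R] = \det A_\Phi$ — from the root system axioms, and then to evaluate the four Cartan determinants; for the classical types I would also note the shortcut of reading the index directly off the explicit lattice descriptions recalled just above.

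First I would fix a set of simple roots $\alpha_1, \dots, \alpha_\ell$ of $\Phi$, let $\alpha_j^{\vee} = 2\alpha_j/\langle \alpha_j, \alpha_j\rangle$ be the simple coroots, and let $\omega_1, \dots, \omega_\ell$ be the fundamental weights, characterized by $\langle \omega_i, \alpha_j^{\vee}\rangle = \delta_{ij}$; then $\Lambda_R = \ZZ\{\alpha_1, \dots, \alpha_\ell\}$ and $\Lambda_W = \ZZ\{\omega_1, \dots, \omega_\ell\}$, and $\lambda \in \Lambda_W$ precisely when $\langle \lambda, \alpha_j^{\vee}\rangle \in \ZZ$ for every $j$. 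Writing $a_{ij} = \langle \alpha_i, \alpha_j^{\vee}\rangle$ for the entries of the Cartan matrix $A_\Phi$, these are integers by the defining property of a root system, and expanding $\alpha_i$ in the basis $(\omega_j)$ yields $\alpha_i = \sum_j a_{ij}\omega_j$. This gives both halves of the general statement at once: it shows $\alpha_i \in \Lambda_W$, hence $\Lambda_R \subseteq \Lambda_W$; and it exhibits $A_\Phi$ as the integer matrix expressing the basis $(\alpha_i)$ of $\Lambda_R$ in terms of the basis $(\omega_j)$ of $\Lambda_W$, so that $[\Lambda_W : \Lambda_R] = |\det A_\Phi|$. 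Since a finite-type Cartan matrix is positive definite after symmetrization, its determinant is positive, and the index equals $\det A_\Phi$.

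Next I would compute $\det A_\Phi$ for $\Phi = A_{n-1}, B_n, C_n, D_n$. For $A_{n-1}$ the Cartan matrix is the $(n-1)\times(n-1)$ tridiagonal matrix with $2$'s on the diagonal and $-1$'s on the adjacent diagonals, and cofactor expansion along the last row gives the recursion $d_k = 2 d_{k-1} - d_{k-2}$ with $d_1 = 2$, $d_2 = 3$, so $d_{n-1} = n$. The Cartan matrices of $B_n$ and $C_n$ differ from this one only in a single off-diagonal corner entry, which becomes $-2$, and are transposes of one another, so they share a determinant, which the same expansion evaluates to $2$. For $D_n$ with $n \ge 3$ I would expand along the row of a leaf of the fork, reducing the determinant to smaller tridiagonal determinants of type $A$ and obtaining $4$; the low-rank coincidences $D_2 \cong A_1 \times A_1$ and $D_3 \cong A_3$ (with $\det A_3 = 4$) anchor the induction and account for the hypothesis $n \ge 3$ in the statement.

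For the classical types there is a cleaner alternative that avoids the Cartan matrix: read the index off the lattices listed just before the statement. For $A_{n-1}$, the map $\sum a_i e_i \mapsto \sum a_i \bmod n$ annihilates $e_1 + \dots + e_n$, so it descends to a surjection of $\Lambda_W(A_{n-1})$ onto $\ZZ/n\ZZ$ with kernel $\Lambda_R(A_{n-1})$. For $C_n$, the map $\sum a_i e_i \mapsto \sum a_i \bmod 2$ is a surjection $\ZZ^n = \Lambda_W(C_n) \to \ZZ/2\ZZ$ with kernel $\Lambda_R(C_n)$. For $B_n$, one has $\Lambda_R(B_n) = \ZZ^n$ while $\Lambda_W(B_n)/\ZZ^n$ is generated by the order-$2$ class of $(e_1 + \dots + e_n)/2$. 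And for $D_n$, the chain $\Lambda_R(D_n) \subseteq \ZZ^n \subseteq \Lambda_W(D_n)$ has both steps of index $2$ (by the $C_n$ and $B_n$ computations, respectively), so the total index is $4$. The only place any real care is needed — the expected main obstacle — is the $D_n$ case: the cofactor expansion at the branch node must be bookkept correctly, and in the lattice picture $\Lambda_W(D_n)/\Lambda_R(D_n)$ is cyclic of order $4$ when $n$ is odd but isomorphic to $(\ZZ/2\ZZ)^2$ when $n$ is even — although its order is $4$ in either case, which is all the statement requires.
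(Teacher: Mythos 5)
The paper does not prove this proposition; it simply cites Fulton--Harris (Lemma 23.15) and records the four determinants. Your argument is a correct, self-contained proof of everything that is cited. The general part is the standard one: writing $\alpha_i = \sum_j a_{ij}\omega_j$ via $a_{ij} = \langle\alpha_i,\alpha_j^\vee\rangle$ exhibits $A_\Phi$ as the integer change-of-basis matrix from the $\omega$-basis of $\Lambda_W$ to the $\alpha$-basis of $\Lambda_R$, giving both the inclusion $\Lambda_R \subseteq \Lambda_W$ and $[\Lambda_W:\Lambda_R]=|\det A_\Phi|=\det A_\Phi$; the determinant evaluations via the tridiagonal recursion $d_k = 2d_{k-1}-d_{k-2}$ are all correct (in particular $\det B_n = 2\cdot n - 2(n-1) = 2$, and the $D_3\cong A_3$ anchor with $\det A_3 = 4$). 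Your alternative computation, which reads the index directly off the explicit lattice descriptions the paper lists just before the proposition, is worth highlighting: it is closer in spirit to what the rest of the paper actually uses (Lemmas \ref{lemma:A'}--\ref{lemma:D'} do exactly this kind of elementary lattice-index bookkeeping), it avoids any appeal to Cartan matrices, and it even refines the statement by identifying the group $\Lambda_W/\Lambda_R$ rather than just its order — cyclic of order $4$ for $D_n$ with $n$ odd, $(\ZZ/2\ZZ)^2$ for $n$ even — a distinction the Cartan determinant cannot see but which is relevant when one computes multiplicities $m_W$ as in Section \ref{subsec:Tutte}. Since the paper offers no proof to compare against, either of your two routes is an acceptable justification; the lattice-theoretic one is the more natural fit for this paper.
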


\subsection{\textsf{Our formulas}}

We give explicit formulas for the arithmetic Tutte polynomials of the classical root systems. Our results are most cleanly expressed in terms of the \emph{(arithmetic) coboundary polynomial}, which is the following simple transformation of the (arithmetic) Tutte polynomial:
\[
\overline{\chi}_{\A}(X,Y) = (y-1)^{r(\A)} T_{\A}(x,y), \qquad {\psi}_{\A}(X,Y) = (y-1)^{r(\A)} M_{\A}(x,y)
\]
where 
\[
x = \frac{X+Y-1}{Y-1}, \quad y=Y, \qquad \textrm{ and } \qquad X=(x-1)(y-1), \quad Y=y.
\]
Clearly, the (arithmetic) Tutte polynomial can be recovered readily from the (arithmetic) coboundary polynomial. Throughout the paper, we will continue to use the variables $X,Y$ for coboundary polynomials and $x,y$ for Tutte polynomials.

Our formulas are conveniently expressed in terms of the exponential generating functions for the coboundary polynomials:

\begin{definition}\label{def:Tuttegen}
For the infinite families $\Phi=B,C,D$,
of classical root systems, let the \emph{Tutte generating function} and the \emph{arithmetic Tutte generating function}\footnote{It might be more accurate to call it the \emph{arithmetic coboundary generating function}, but we prefer this name because the Tutte polynomial is much more commonly used than the coboundary polynomial.} be
\[
\overline{X}_\Phi(X,Y,Z) = \sum_{n \geq 0} \overline{\chi}_{\Phi_n}(X,Y) \frac{Z^n}{n!}, \qquad
\Psi_\Phi(X,Y,Z) = \sum_{n \geq 0} \psi_{\Phi_n}(X,Y) \frac{Z^n}{n!},
\]
respectively; and for $\Phi=A$ let them be
\[
\overline{X}_A(X,Y,Z) = 1+X \sum_{n \geq 1} \overline{\chi}_{A_{n-1}}(X,Y) \frac{Z^n}{n!}, \quad
\Psi_A(X,Y,Z) = 1+X\sum_{n \geq 1} \psi_{A_{n-1}}(X,Y) \frac{Z^n}{n!}.
\]
For $\Phi=A$ we need the extra factor of $X$, since the root system $A_{n-1}$ is of rank $n-1$ inside $\ZZ^n$.
\end{definition}

Our formulas are given in terms of the following functions which have been studied extensively in complex analysis \cite{Langley, Liu, Morris} and statistical mechanics \cite{SS, SS2, Sokal}:

\begin{definition} \label{def:RR}
Let the \emph{three variable Rogers-Ramanujan function} be
\[
\widetilde{R}(\alpha, \beta, q) = \sum_{n \geq 0} \frac{\alpha^n \,  \beta^{n \choose 2}}{(1+q)(1+q+q^2) \cdots (1+q+\cdots+q^{n-1})}
\]
and the \emph{deformed exponential function} be
\[
F(\alpha, \beta) = \sum_{n \geq 0} \frac{\alpha^n \, \beta^{n \choose 2}}{n!} = \widetilde{R}(\alpha, \beta, 1).
\] 
\end{definition}

%
%
%
%
%

We denote the  arithmetic Tutte generating functions of the root systems with respect to the integer, weight, and root lattices by 
$\Psi_\Phi, \Psi_\Phi^W,$ and $\Psi_\Phi^R$, respectively. Tutte (in Type $A$) and the first author (in types $A, B, C, D$) computed the ordinary Tutte generating functions for the classical root systems:


\begin{theorem} \label{thm:Tutte} \cite{Ardila, Tutte} The Tutte generating functions of the classical root systems are
\begin{eqnarray*}
X_A &=& F(Z,Y)^X\\
X_B &=& F(2Z,Y)^{(X-1)/2}F(YZ,Y^2)\\
X_C &=& F(2Z,Y)^{(X-1)/2}F(YZ,Y^2)\\
X_D &=& F(2Z,Y)^{(X-1)/2}F(Z,Y^2)
\end{eqnarray*}
\end{theorem}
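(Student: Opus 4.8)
The plan is to obtain the four identities by applying the finite field method (Theorem~\ref{thm:finfieldhyps}) to the classical root systems, which turns the computation of each coboundary polynomial into a weighted point count over $\FF_q^n$, and then to recognize the resulting exponential generating functions. First I would fix an odd prime power $q$, so that $\Phi_n$ has the same matroid over $\FF_q$ as over $\mathbb{Q}$ (the relevant subdeterminants of $\Phi_n$ are $\pm$ powers of $2$). Matching Theorem~\ref{thm:finfieldhyps} with the definition $\overline{\chi}_{\A}(X,Y)=(y-1)^{r(\A)}T_{\A}(x,y)$ of the coboundary polynomial gives
\[
\sum_{p\in\FF_q^n}Y^{h(p)}=q^{\,n-r}\,\overline{\chi}_{\Phi_n}(q,Y),
\]
where $h(p)$ is the number of root hyperplanes of $\Phi_n$ through $p$ and $r$ is the rank, so $r=n-1$ in type $A$ and $r=n$ in types $B,C,D$. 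Summing against $Z^n/n!$ and using Definition~\ref{def:Tuttegen}, the right-hand side becomes exactly $\overline{X}_\Phi(q,Y,Z)$; in type $A$ the factor $q^{\,n-r}=q$ is precisely the extra $X$ built into the definition of $\overline{X}_A$. So it remains to compute $\sum_{n\ge0}\big(\sum_{p\in\FF_q^n}Y^{h(p)}\big)Z^n/n!$ for each type.

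To do this I would read off $h(p)$ from the multiset of coordinates of $p$ and group the coordinates accordingly. In type $A$, $p$ lies on $x_i=x_j$ iff $p_i=p_j$, so $h(p)=\sum_{c\in\FF_q}\binom{a_c}{2}$ with $a_c=\#\{i:p_i=c\}$, and grouping coordinates by their common value expresses the count as a product of $q$ copies of the one-class generating function $\sum_{m\ge0}Y^{\binom{m}{2}}Z^m/m!=F(Z,Y)$, i.e.\ $\overline{X}_A(q,Y,Z)=F(Z,Y)^q$. In types $B,C,D$ I would group the coordinates into the zero-class and the $(q-1)/2$ pairs $\{v,-v\}$ with $v\neq 0$; a short computation using $\binom{b}{2}+\binom{c}{2}+bc=\binom{b+c}{2}$ shows that a pair occupying $m$ coordinates contributes $\binom{m}{2}$ to $h(p)$ (the internal choice of which coordinates equal $v$ versus $-v$ washes out, leaving a factor $2^m$), while $a$ zero-coordinates contribute $2\binom{a}{2}$ to $h(p)$ in type $D$ and $2\binom{a}{2}+a=a^2$ in types $B$ and $C$ (the hyperplanes $x_i=0$, respectively $2x_i=0$, coincide since $q$ is odd). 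The one-pair generating function is $\sum_{m\ge0}2^mY^{\binom{m}{2}}Z^m/m!=F(2Z,Y)$, and the zero-class generating function is $\sum_{a\ge0}Y^{2\binom{a}{2}}Z^a/a!=F(Z,Y^2)$ in type $D$ and $\sum_{a\ge0}Y^{a^2}Z^a/a!=F(YZ,Y^2)$ in types $B,C$, so the product rule yields
\[
\overline{X}_B(q,Y,Z)=\overline{X}_C(q,Y,Z)=F(2Z,Y)^{(q-1)/2}F(YZ,Y^2),\qquad \overline{X}_D(q,Y,Z)=F(2Z,Y)^{(q-1)/2}F(Z,Y^2).
\]

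Finally I would upgrade these identities, valid at all odd prime powers $q$, to identities in the indeterminate $X$. For each fixed $n$ the coefficient of $Z^n/n!$ on the left is the polynomial $\overline{\chi}_{\Phi_n}(q,Y)$ (times $q$ in type $A$), and on the right, since $F$ has constant term $1$, writing $F(\cdot,\cdot)^{aq+b}=\exp\!\big((aq+b)\log F(\cdot,\cdot)\big)$ shows that its coefficient of $Z^n$ is again a polynomial in $q$ and $Y$; two polynomials in $q$ agreeing at every odd prime power are equal, so the displayed identities hold verbatim with $q$ replaced by $X$. Interpreting $F(\cdot,\cdot)^X$ as $\exp(X\log F(\cdot,\cdot))$ then gives the stated formulas for $X_A,X_B,X_C,X_D$; the equality $X_C=X_B$ also follows directly from the fact that $B_n$ and $C_n$ realize the same matroid, since $2e_i$ is parallel to $e_i$. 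I expect the main obstacle to be the bookkeeping in the middle step: one must handle the zero-coordinates separately because they lie on both $x_i=x_j$ and $x_i=-x_j$, and then spot the collapses $\binom{b}{2}+\binom{c}{2}+bc=\binom{b+c}{2}$ and $2\binom{a}{2}+a=a^2$ that turn each class's generating function into an evaluation of $F$. Granting these, the rest is a routine exponential-generating-function computation followed by polynomial interpolation.
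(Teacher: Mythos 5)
Your proof is correct, but it takes a genuinely different route from the one the paper gives for this theorem. You use the finite-field method of Theorem~\ref{thm:finfieldhyps}: fix an odd prime power $q$, group coordinates of a point $p\in\FF_q^n$ by value (type $A$) or by the classes $\{0\}$ and $\{v,-v\}$ (types $B,C,D$), compute $h(p)$ classwise using the collapses $\binom{b}{2}+\binom{c}{2}+bc=\binom{b+c}{2}$ and $2\binom{a}{2}+a=a^2$, apply the product formula for exponential generating functions, and finish by interpolating in $q$. This is exactly the strategy of the cited reference \cite{Ardila}, and it works because $q$ odd makes $2x_i=0$ and $x_i=0$ the same hyperplane, so $B_n$ and $C_n$ coincide over $\FF_q$. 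The paper, by contrast, reproves Theorem~\ref{thm:Tutte} in Section~\ref{sec:computebygraphs} via graph and signed-graph enumeration: it establishes that rank in the arrangements $A_G,B_G,C_G,D_G$ is governed by the number of balanced components (Lemmas \ref{lemma:A}--\ref{lemma:D}), proves a master generating function $S(t_+,t_-,t_0,x,y,z)$ for signed graphs with six parameters (Theorem~\ref{thm:master}), and reads off each Tutte generating function as a specialization such as $S(X,1,1,Y-1,Y-1,Z)$. Your approach is shorter and more self-contained for the ordinary Tutte polynomials; the paper's approach requires the bigger machinery of Theorem~\ref{thm:master} but is what the authors then reuse to handle the arithmetic Tutte polynomials with respect to the root and weight lattices, where multiplicities depend on balance, loops, and parity in ways that fit more naturally into the signed-graph bookkeeping than into the finite-field computation. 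Both derivations are sound.
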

De Concini and Procesi \cite{DPzonotope} and Geldon \cite{Geldon} extended those computations to the exceptional root systems $G_2, F_4, E_6, E_7,$ and $E_8$. 

In this paper we compute the \textbf{arithmetic} Tutte polynomials of the classical root systems. Our main results are the following:

\begin{theorem}  \label{thm:TutteZ} The arithmetic Tutte generating functions of the classical root systems \textbf{in their integer lattices} are
\begin{eqnarray*}
\Psi_A &=& F(Z,Y)^X\\
\Psi_B &=& F(2Z,Y)^{\frac{X}2-1}F(Z,Y^2)F(YZ,Y^2)\\
\Psi_C &=& F(2Z,Y)^{\frac{X}2-1}F(YZ,Y^2)^2\\
\Psi_D &=& F(2Z,Y)^{\frac{X}2-1}F(Z,Y^2)^2
\end{eqnarray*}
\end{theorem}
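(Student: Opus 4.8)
The plan is to use the finite field method of Theorem~\ref{thm:finfield}. For each classical root system $\Phi_n$ inside its integer lattice, I would compute the generating function
\[
\sum_{p \in T} t^{h(p)}, \qquad T = (\FF_{q+1}^*)^v,
\]
where $h(p)$ counts the hypertori of $\T(\Phi_n)$ through $p$, and then read off the arithmetic coboundary polynomial via the substitution $X = (x-1)(y-1) = q(t-1)$ and $Y = y = t$; indeed the displayed formula in Theorem~\ref{thm:finfield} says $\psi_{\Phi_n}(X,Y) = (t-1)^{r} M_{\Phi_n}(x,y) / $ (the correcting power of $q^{n-r}$), so after the coboundary change of variables one gets $q^{v}\psi_{\Phi_n}(X,Y)$ equal to a weighted point count over the torus. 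First I would check the hypothesis $m(B)\mid q$: for the integer lattice $\ZZ^v$ the multiplicities of subsets of the classical roots are products of $1$'s and $2$'s (coming from the $e_i+e_j$, $e_i-e_j$, $2e_i$ relations), so it suffices to take $q$ even, i.e.\ $q+1$ an odd prime power; since the arithmetic Tutte polynomial is a polynomial identity in $q$, verifying it for infinitely many such $q$ establishes it in general.

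The core of the argument is then a direct enumeration over the torus $(\FF_{q+1}^*)^v$. Writing $t = (t_1,\dots,t_v)$ with each $t_i \in \FF_{q+1}^*$, I would sort the coordinates into classes according to the value of $t_i$: in types $B,C,D$ the relevant events are $t_i = 1$, $t_i = -1$ (an involution, present because $\FF_{q+1}^*$ is cyclic of even order $q$), $t_i = t_j$, $t_i = t_j^{-1}$, and (in $B$) $t_i^2 = 1$, while in type $C$ the root $2e_i$ contributes the condition $t_i^2 = 1$. Grouping coordinates into blocks that are ``glued'' by the relations $t_i = t_j^{\pm1}$ turns the count into an exponential-formula computation: a single block on $n$ coordinates contributes a factor whose exponential generating function is exactly a deformed exponential $F(\alpha,\beta)$ for appropriate $\alpha,\beta$ recording the number of ways to extend and the number of hyperplanes/hypertori internal to the block. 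This is the standard mechanism by which $F(2Z,Y)$, $F(YZ,Y^2)$, $F(Z,Y^2)$ arise: $F(2Z,Y)$ from blocks where a new coordinate can be glued via $t_i = t_j$ or $t_i = t_j^{-1}$ (the factor $2$) with $\binom n2$ internal relations of ``type $Y$'', and $F(YZ,Y^2)$ or $F(Z,Y^2)$ from the ``one special element per block'' contributions of the short roots $e_i$ resp.\ the absence thereof. The type $A$ case is the cleanest: there the torus point count with the extra projectivization factor reproduces $F(Z,Y)^X$ exactly as in the ordinary case, because every subset of $A_{n-1}$ has multiplicity $1$ in $\ZZ^n$, so $M_{A_{n-1}} = T_{A_{n-1}}$ and $\Psi_A = \overline X_A = F(Z,Y)^X$ is immediate from Theorem~\ref{thm:Tutte}.

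The main obstacle, and the place where the arithmetic computation genuinely diverges from the ordinary one of Theorem~\ref{thm:Tutte}, is the bookkeeping of the multiplicity factors $m(B)$ in the coefficient-by-coefficient count — equivalently, the appearance of the solutions to $t_i = -1$ and $t_i^2 = 1$ in the torus that have no analogue over a vector space. Concretely, in type $B$ the point $t$ with some $t_i = -1$ lies on the hypertorus $T_{e_i}$ but on no $T_{e_i \pm e_j}$ unless another coordinate is also $\pm1$, and one must carefully separate the contribution of coordinates equal to $1$, equal to $-1$, and in ``generic'' glued blocks; this is what replaces the single factor $F(YZ,Y^2)$ of $X_B$ by the product $F(Z,Y^2)F(YZ,Y^2)$ in $\Psi_B$, and analogously produces the squares $F(YZ,Y^2)^2$ and $F(Z,Y^2)^2$ in $\Psi_C$ and $\Psi_D$. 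I would organize this by first handling the sublattice of coordinates pinned to $\{1,-1\}$ (a rank-one-per-coordinate computation giving a clean product of two deformed exponentials), then the complementary generic blocks (giving the $F(2Z,Y)^{X/2-1}$ factor exactly as before, the shift $-1$ in the exponent accounting for the one block that must contain a fixed coordinate), and finally invoking the exponential formula to multiply the two generating functions. A parallel but independent check is available: since Theorem~\ref{thm:Tutte} already gives $\overline X_\Phi$, and the arithmetic and ordinary coboundary polynomials agree except through the multiplicity weights, one can verify the claimed $\Psi_\Phi$ by a low-rank computation ($n \le 3$, say) matched against the closed form, which is the kind of sanity check I would run before trusting the general enumeration.
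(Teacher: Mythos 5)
Your plan is essentially the one the paper follows: apply Theorem~\ref{thm:finfield}, sort the coordinates of a point of $(\FF_{q+1}^*)^v$ into blocks according to whether a coordinate equals $1$, equals $-1$, or lies in a generic inverse pair $\{a,a^{-1}\}$, read off $h(p)$ block by block, and convert the resulting sum over ordered partitions of $[v]$ into a product of deformed exponentials via the compositional formula; the type~$A$ case is indeed immediate from unimodularity and Theorem~\ref{thm:Tutte}. So the route is not a different one.

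There is, however, a concrete error in your handling of the hypothesis $m(B)\mid q$. You observe that the multiplicities for $B_n,C_n,D_n$ in $\ZZ^n$ are powers of~$2$ and then conclude that ``it suffices to take $q$ even.'' That conclusion does not follow: by Lemma~\ref{lemma:B} (and its analogues) the multiplicity of a subset encoded by a signed graph $G$ is $2^{c_-(G)}$ (or $2^{c_-(G)+c_0(G)}$ in type~$C$), and $c_-(G)$ can be as large as $\lfloor n/2\rfloor$. If you pick $q$ merely even, then for $n\geq 4$ there are subsets $B$ with $m(B)=4\nmid q$, Lemma~\ref{finlem} fails for those $B$, and the identity $\sum_{p\in T}t^{h(p)}=(t-1)^r q^{n-r}M(\ldots)$ you want to interpolate is not even true for your chosen $q$. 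The fix is to demand that $q$ be divisible by a sufficiently high power of $2$ (the paper's phrasing: ``assume $X$ is a multiple of $m(B)$ for all $B\subseteq B_n$''); Dirichlet's theorem still supplies infinitely many admissible primes $q+1$, so polynomial interpolation goes through. As a small secondary point, the $-1$ in the exponent $F(2Z,Y)^{X/2-1}$ is not because ``one block must contain a fixed coordinate'': of the $X/2+1$ blocks of coordinate-values, the two singleton classes $\{1\}$ and $\{-1\}$ are treated separately and yield the factors $F(YZ,Y^2)$ and $F(Z,Y^2)$, leaving exactly $X/2-1$ generic pair-classes.
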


\begin{theorem}  \label{thm:TutteR} The arithmetic Tutte generating functions of the classical root systems \textbf{in their root lattices} are
\begin{eqnarray*}
\Psi^R_A &=& F(Z,Y)^X\\
\Psi^R_B &=& F(2Z,Y)^{\frac{X}2-1}F(Z,Y^2)F(YZ,Y^2)\\
\Psi^R_C &=& \frac12 F(2Z,Y)^{\frac{X}2-1} \left[F(2Z,Y)+F(YZ,Y^2)^2\right]\\
\Psi^R_D &=& \frac12 F(2Z,Y)^{\frac{X}2-1}\left[F(2Z,Y) + F(Z,Y^2)^2\right]\\
\end{eqnarray*}
\end{theorem}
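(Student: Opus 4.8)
The plan is to apply the finite field method of Theorem~\ref{thm:finfield} to the root systems $B_n, C_n, D_n$ with respect to their root lattices, following the same strategy that proves Theorem~\ref{thm:TutteZ} for the integer lattices. The key point is that the root lattice $\Lambda_R$ of $B_n$ equals $\ZZ^n$, so $\Psi_B^R = \Psi_B$ and nothing new is needed there; the real content is in types $C_n$ and $D_n$, whose root lattices $\Lambda_R(C_n) = \Lambda_R(D_n) = \{\sum a_i e_i : \sum a_i \text{ even}\}$ are index-$2$ sublattices of $\ZZ^n$. For these, I would fix a prime power $q+1$ (with $q$ even so that the hypothesis $m(B)\mid q$ is met, since all multiplicities here are powers of $2$) and compute $\sum_{p \in T} t^{h(p)}$ where $T = \Hom(\Lambda_R, \FF_{q+1}^*) \cong (\FF_{q+1}^*)^n$, via a point count over the torus that keeps track of how many of the hypertori $T_a$, $a \in \Phi_n$, pass through each point.

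The main structural observation making this tractable is that $T = \Hom(\Lambda_R, \FF_{q+1}^*)$ differs from $\Hom(\ZZ^n, \FF_{q+1}^*)$ by exactly the index-$2$ data. Concretely, since $\Lambda_R$ sits inside $\ZZ^n$ with $\ZZ^n / \Lambda_R \cong \ZZ/2$, we get a short exact sequence $1 \to \Hom(\ZZ/2, \FF_{q+1}^*) \to \Hom(\ZZ^n, \FF_{q+1}^*) \to \Hom(\Lambda_R, \FF_{q+1}^*)$, but it is cleaner to parametrize characters of $\Lambda_R$ directly: a character of $\Lambda_R$ is a tuple $(x_1, \ldots, x_n)$ together with a choice of square root behavior, or equivalently the character group of $\Lambda_R$ is generated by the $e_i$ and one extra element coming from the half-integral vector — but here $\Lambda_R \subseteq \ZZ^n$ so instead $\Hom(\Lambda_R, \FF_{q+1}^*)$ is a \emph{quotient-related} enlargement. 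The practical effect is that summing $t^{h(p)}$ over $T_{\Lambda_R}$ amounts to averaging two evaluations: one over the ``even'' characters (which factor through $\ZZ^n$) and one over a coset. This is precisely what produces the $\frac12[\,F(2Z,Y) + \cdots\,]$ shape: the first term $F(2Z,Y)$ is the ``diagonal'' contribution where all the roots $e_i \pm e_j$ behave as they would with no off-diagonal choices (giving a pure type-$A$-like $F(2Z,Y)$ factor), and the second term reproduces the type-$C$ (resp. type-$D$) integer-lattice answer $F(YZ,Y^2)^2$ (resp. $F(Z,Y^2)^2$).

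Concretely, the steps I would carry out are: (1) recall from the proof of Theorem~\ref{thm:TutteZ} the exponential-generating-function bookkeeping that translates a torus point count into the claimed product formulas, including the combinatorial interpretation of the $F(2Z,Y)^{X/2-1}$ prefactor as coming from the ``$e_i \pm e_j$'' roots and the splitting into connected-component contributions; (2) set up the point count over $T_{\Lambda_R} \cong \Hom(\Lambda_R, \FF_{q+1}^*)$, writing a point as the data needed to specify a homomorphism out of the even-sum sublattice, and observe that this is a $2$-to-$1$ or index-$2$ modification of the $\ZZ^n$ count; (3) split the sum according to a parity/character-sum argument (a standard orthogonality trick: insert $\frac12(1 + \chi)$ where $\chi$ is the order-$2$ character detecting membership in $\Lambda_R$), yielding two summands; (4) identify the first summand with the count that gives $F(2Z,Y)$ overall (after the $F(2Z,Y)^{X/2-1}$ prefactor is accounted for, this is the $F(2Z,Y) \cdot F(2Z,Y)^{X/2-1}$ term) and the second with the $\ZZ^n$-count already computed in Theorem~\ref{thm:TutteZ}; (5) divide by $2$ and assemble the generating function. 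Type $B$ is then immediate from $\Lambda_R(B_n) = \ZZ^n$.

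The hard part will be step (3)–(4): correctly tracking how the extra short root ($2e_i$ in type $C$, and the absence of short roots in type $D$) interacts with the index-$2$ character-sum splitting, and verifying that the ``diagonal'' branch really collapses all the $F(\cdot, Y^2)$ factors down to a single $F(2Z,Y)$. One has to be careful that restricting to the sublattice changes which elements $p \in T$ satisfy $p(2e_i) = 1$ versus $p(e_i) = 1$, and the multiplicities $m(B)$ of subsets $B$ change relative to the $\ZZ^n$ case — so the point-count-to-arithmetic-Tutte dictionary of Theorem~\ref{thm:finfield} must be applied to the \emph{new} lattice, not reused verbatim. Once the two branches are correctly isolated, the rest is the same exponential-generating-function manipulation (splitting by connected components of signed graphs, exactly as in Section~\ref{sec:compute2}) already used for Theorem~\ref{thm:TutteZ}, so I do not expect further obstacles there.
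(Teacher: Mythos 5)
Your high-level plan matches the paper's finite-field proof: restrict attention to $C_n$ and $D_n$, view $\Lambda_R$ as an index-$2$ sublattice of $\ZZ^n$, split the character torus $T_R = \Hom(\Lambda_R,\FF^*_{q+1})$ into two halves, relate one half to the integer-lattice answer via the restriction map $i^*\colon T_{\ZZ}\to T_R$ (which is $2$-to-$1$ onto its image), and compute the other half separately to get the extra $F(2Z,Y)$ term. But your outline has a couple of genuine gaps.

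First, the way you describe the split is off in a way that would lead you astray. You propose inserting $\tfrac12(1+\chi)$ where $\chi$ ``detects membership in $\Lambda_R$,'' but you are summing over characters of $\Lambda_R$, not over lattice vectors, so there is no membership to detect. The actual dichotomy in the paper is on the \emph{values}: $T_R$ is split into $T_R^{\mathrm{even}} = \{f : f(2e_1)\ \text{is a square in}\ \FF^*_{q+1}\}$ and its complement $T_R^{\mathrm{odd}}$; equivalently, $\chi$ is the quadratic (Legendre) character of $\FF^*_{q+1}$ evaluated at $f(2e_1)$. The image of $i^*$ is exactly $T_R^{\mathrm{even}}$, which is what makes the ``half of the $\ZZ^n$ count'' identification (your step (4), second summand) go through. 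Relatedly, the purported short exact sequence $1\to\Hom(\ZZ/2,\FF^*_{q+1})\to\Hom(\ZZ^n,\FF^*_{q+1})\to\Hom(\Lambda_R,\FF^*_{q+1})$ is not right-exact: the cokernel is $\Ext^1(\ZZ/2,\FF^*_{q+1})\cong\ZZ/2$, which is precisely the obstruction that makes $T_R^{\mathrm{odd}}$ nonempty.

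Second, the ``diagonal'' branch is where all the work is, and your heuristic does not amount to a proof. You say the non-extending characters give a ``pure type-$A$-like $F(2Z,Y)$ factor,'' but showing that $\sum_{f\in T_R^{\mathrm{odd}}} Y^{h(f)} = \tfrac12 F(2Z,Y)^{X/2}$ requires a real argument: one notes that $f(2e_i)=f(2e_1)f(e_i-e_1)^2$ is a non-square for every $i$ (so no short-root hypertorus ever contains $f$), pairs the $X/2$ non-squares into $\{\alpha_k,\alpha_k^{-1}\}$, partitions $[n]$ accordingly, and then carefully counts the $2^{n-1}$ sign choices for $f(e_i-e_{i+1})$ within blocks and the $2^{X/4-1}$ cross-block choices before reassembling the exponential generating function. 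Moreover, this pairing of non-squares requires $-1$ to be a square in $\FF^*_{q+1}$, which forces $q\equiv 0\pmod 4$ (the paper takes $X+1\equiv 1\pmod 4$); your hypothesis ``$q$ even'' is not strong enough. Finally, for completeness, note that type $A$ is also part of the statement: there $\Psi^R_A=\Psi_A$ because the multiplicities remain $1$ in $\Lambda_R(A_{n-1})$ (the relevant matrix is totally unimodular), a fact worth stating even if short.
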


\begin{theorem} \label{thm:TutteW} The arithmetic Tutte generating functions of the classical root systems \textbf{in their weight lattices} are
\[
\Psi^W_A = 
\sum_{n \in \NN} \varphi(n)\left(\left[F(Z,Y)F(\omega_n Z, Y) F(\omega_n^2 Z, Y) \cdots F(\omega_n^{n-1} Z, Y)\right]^{X/n}-1 \right)
\]
where $\varphi(n) = \#\{m \in \NN \, : \, 1 \leq m \leq n, \,\, (m,n)=1\}$ is Euler's totient function and $\omega_n$ is a primitive $n$th root of unity for each $n$, 
\begin{eqnarray*}
\Psi^W_B &=& F(2Z,Y)^{\frac{X}4-1}F(Z,Y^2)F(YZ,Y^2)\left[F(2Z,Y)^{\frac{X}4} + F(-2Z,Y)^{\frac{X}4}\right]
\\
\Psi^W_C &=& F(2Z,Y)^{\frac{X}2-1}F(YZ,Y^2)^2\\
\Psi^W_D &=& F(2Z,Y)^{\frac{X}4-1}F(Z,Y^2)^2\left[F(2Z,Y)^{\frac{X}4} + F(-2Z,Y)^{\frac{X}4}\right]
\end{eqnarray*}
\end{theorem}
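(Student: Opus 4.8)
The plan is to run the finite field method of Theorem~\ref{thm:finfield} on the torus $T_W := \Hom(\Lambda_W(\Phi_n),\FF_{q+1}^*)$. First note that for each fixed $n$ the multiplicities $m(B)$, $B\subseteq\Phi_n$, with respect to the weight lattice form a finite set — a short lemma shows they are powers of $2$ when $\Phi=B,D$ and divisors of $n$ when $\Phi=A$ — so there are infinitely many prime powers $q+1$ with $m(B)\mid q$ for all $B$, say $q+1$ a prime $\equiv 1$ modulo a suitable $M_n$; when $\Phi=B,D$ this also forces $q\equiv 0\pmod 4$. For such $q$, Theorem~\ref{thm:finfield} gives $\sum_{p\in T_W}t^{h(p)}=q^{n-r(\Phi_n)}\psi^W_{\Phi_n}(q,t)$ with $h(p)$ the number of hypertori through $p$; since $\psi^W_{\Phi_n}(X,Y)$ is a polynomial, an identity holding for infinitely many such $q$ holds identically, and summing over $n$ yields the generating functions. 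For $\Phi=C$ there is nothing to prove: $\Lambda_W(C_n)=\ZZ^n$ is the integer lattice, so $\Psi^W_C=\Psi_C$ and the formula is Theorem~\ref{thm:TutteZ}.

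For $\Phi=B_n$ or $D_n$ we have $\Lambda_W(\Phi_n)=\ZZ^n+\ZZ f$ with $f=\tfrac12(e_1+\cdots+e_n)$, so a character is a pair $\bigl((p_i)_i;s\bigr)$ with $p_i=p(e_i)$, $s=p(f)$, $s^2=\prod_i p_i$; as all roots lie in $\ZZ^n$, the statistic $h(p)$ depends only on $(p_i)$. Counting the two admissible values of $s$ (two because $q$ is even) via the quadratic character $\lambda\colon\FF_{q+1}^*\to\{\pm1\}$ gives
\[
\sum_{p\in T_W}t^{h(p)}\;=\;\sum_{(p_i)\in(\FF_{q+1}^*)^n}t^{h(p)}\;+\;\sum_{(p_i)\in(\FF_{q+1}^*)^n}\Bigl(\prod_i\lambda(p_i)\Bigr)t^{h(p)}.
\]
The first sum is the integer-lattice count, whose exponential generating function is $\Psi_B$ (resp.\ $\Psi_D$) by Theorem~\ref{thm:TutteZ}. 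For the second, I would reuse the bookkeeping behind Theorem~\ref{thm:TutteZ} from Section~\ref{sec:compute1}: partition $\{1,\dots,n\}$ into the coordinates with $p_i=1$, those with $p_i=-1$, and blocks sharing a generic value-pair $\{c,c^{-1}\}$ with $c\ne\pm1$, over which $h$ is additive. The weight $\prod_i\lambda(p_i)$ leaves the $p_i=\pm1$ factors untouched (because $4\mid q$ makes $\lambda(-1)=1$) but replaces the generic-block factor: a size-$k$ block on a pair with $\lambda(c)=\varepsilon$ contributes $(2\varepsilon)^k t^{\binom k2}Z^k/k!$, and since exactly $q/4-1$ generic pairs have $\lambda(c)=+1$ and $q/4$ have $\lambda(c)=-1$, the factor $F(2Z,Y)^{X/2-1}$ turns into $F(2Z,Y)^{X/4-1}F(-2Z,Y)^{X/4}$. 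Adding the two contributions and pulling out $F(2Z,Y)^{X/4-1}$ gives exactly the stated $\Psi^W_B$ and $\Psi^W_D$.

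For $\Phi=A_{n-1}$ the torus is $T_W=\{(p_1,\dots,p_n)\in(\FF_{q+1}^*)^n:\prod_i p_i=1\}$ and $h(p)=\#\{i<j:p_i=p_j\}$. I would detect the constraint by harmonic analysis on the cyclic group $\FF_{q+1}^*$: write $[\prod_i p_i=1]=\tfrac1q\sum_\chi\prod_i\chi(p_i)$ over the $q$ multiplicative characters, group them into the $\varphi(d)$ characters of each order $d\mid q$, and use that an order-$d$ character maps $\FF_{q+1}^*$ onto the $d$th roots of unity exactly $q/d$-to-one. Since the $h$-statistic on tuples with prescribed coordinate values is governed by $F$ exactly as in Type~$A$ of Theorem~\ref{thm:Tutte}, this gives for $n\ge1$
\[
\psi^W_{A_{n-1}}(q,t)=\sum_{p\in T_W}t^{h(p)}=n!\,[Z^n]\Bigl(\frac1q\sum_{d\mid q}\varphi(d)\bigl(F(Z,t)F(\omega_d Z,t)\cdots F(\omega_d^{\,d-1}Z,t)\bigr)^{q/d}\Bigr).
\]
Feeding this into Definition~\ref{def:Tuttegen} with $X=q$, the $\tfrac1q$ cancels and one obtains $\Psi^W_A\big|_{X=q}=1+\sum_{d\mid q}\varphi(d)\bigl([\cdots]^{X/d}-1\bigr)$ with $[\cdots]=F(Z,Y)F(\omega_d Z,Y)\cdots F(\omega_d^{\,d-1}Z,Y)$.

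The step I expect to be the crux is passing from this divisor sum to the uniform formula of the theorem, a sum over \emph{all} $n\in\NN$. The point is that each bracket $\bigl[\prod_{j=0}^{n-1}F(\omega_n^j Z,Y)\bigr]^{X/n}-1$ is a power series in $Z^n$ of order $\ge n$ with coefficients polynomial in $X$, so only the divisors of $m$ affect the coefficient of $Z^m$; choosing $q$ divisible by $m$ (and by the modulus making Theorem~\ref{thm:finfield} applicable to $A_{m-1}$) makes the identity above produce precisely $\sum_{n\mid m}\varphi(n)\,[Z^m]\bigl[\prod_j F(\omega_n^j Z,Y)\bigr]^{q/n}$, which equals the coefficient of $Z^m$ in the claimed formula for infinitely many $q$, hence identically in $X$. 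For Types~$B,D$ the analogous but milder obstacle is the careful accounting of $\lambda$ over the value-blocks together with the restriction $q\equiv 0\pmod4$; and in every case one still needs the short lemma on sub-configuration multiplicities required by Theorem~\ref{thm:finfield}.
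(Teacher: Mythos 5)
Your proof is correct. For types $B$, $C$, $D$ you follow the same finite-field route as the paper's Section~\ref{sec:compute1}: type $C$ is trivial, and for $B$ and $D$ the paper also passes through the two-to-one cover $T_W \to T_\ZZ^{\mathrm{even}}$ and then splits the partition-counting of Theorem~\ref{thm:TutteZ} according to squares versus non-squares. Your phrasing via $\mathbf 1[\prod_i p_i \text{ square}] = \tfrac12\bigl(1+\prod_i\lambda(p_i)\bigr)$ is a clean repackaging of the same computation (the paper instead writes ``pick out only the even terms by introducing minus signs''), but the underlying bookkeeping — the three kinds of value-blocks $p_i=1$, $p_i=-1$, generic pair $\{c,c^{-1}\}$ with $\lambda(-1)=1$ because $4\mid q$, and the split $q/4-1$ square pairs versus $q/4$ non-square pairs — is identical, and it correctly produces $F(2Z,Y)^{X/4-1}F(-2Z,Y)^{X/4}$ for the signed sum.

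Your type~$A$ argument, however, is genuinely different from the paper's. The paper does not give a finite-field proof of $\Psi_A^W$ at all: it remarks in Section~1.3 that ``we only have one proof for the formula for $\Psi_A^W$, using graph enumeration,'' and that a finite-field proof ``seems more difficult and less natural.'' Its actual proof (in Section~\ref{sec:computebygraphs}) goes through Lemma~\ref{lemma:A'}, which identifies $m_W(A_G)$ with the $\gcd$ of the component sizes, and then computes the weighted graph generating function $H=\sum_G \gcd(v_1,\dots,v_c)\,t^c y^e z^v/v!$ via dual M\"obius inversion over the divisor poset and the root-of-unity filter $CG_n(y,z)=\tfrac1n\sum_j CG(y,\omega_n^j z)$, with $\varphi(n)=\sum_{d\mid n}d\,\mu(n/d)$ assembling the final sum. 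Your proof instead works directly on the torus: $T_W=\{\prod_i p_i=1\}$, the constraint is detected by $\tfrac1q\sum_\chi\prod_i\chi(p_i)$, characters are grouped by order $d\mid q$, and an order-$d$ character is exactly $q/d$-to-one onto $d$th roots of unity, giving $\prod_{v}F(\chi(v)Z,t)=\bigl(\prod_{j}F(\omega_d^j Z,t)\bigr)^{q/d}$ and hence the divisor sum $\tfrac1q\sum_{d\mid q}\varphi(d)(\cdots)^{q/d}$. Since each bracket $\bigl(\prod_j F(\omega_n^j Z,Y)\bigr)^{X/n}-1$ is a power series in $Z^n$, the passage from $\sum_{d\mid q}$ to $\sum_{n\in\NN}$ degree-by-degree, using infinitely many primes $q+1\equiv1\pmod m$ and polynomiality in $X$, is exactly the interpolation argument the paper runs in the other cases. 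So what you buy is: character orthogonality on $\FF_{q+1}^*$ replaces M\"obius inversion on the divisor poset, the roots of unity $\omega_d$ arise as character values rather than as an a-posteriori Fourier filter on the graph side, and the appearance of $\varphi$ is immediate (number of characters of each order) instead of coming from $\varphi(n)=\sum_{d\mid n}d\mu(n/d)$. You have in effect supplied the finite-field proof of $\Psi_A^W$ whose existence the paper conjectured.

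One small remark on consistency rather than correctness: with $\Psi^W_A = 1 + X\sum_{n\ge1}\psi^W_{A_{n-1}}Z^n/n!$ as in Definition~\ref{def:Tuttegen}, the constant term should be $1$, whereas the displayed formula for $\Psi^W_A$ has constant term $0$; your derivation naturally produces the leading ``$1+$'' and is internally consistent, so this is a minor slip in the theorem's statement, not in your argument.
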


%
%

\begin{remark}
The arithmetic Tutte polynomials of type $A$ in the weight lattice are more subtle than the other ones, due to the large index $[\Lambda_W : \Lambda_R]$ in that case. While the formula of Theorem \ref{thm:TutteW} seems rather impractical for computations, at the end of Section \ref{sec:compute2} we give an  alternative formulation which is efficient and easily implemented.
\end{remark}

\begin{remark}
The generating function for the actual Tutte polynomials is obtained easily from the above by substituting
\[
X = (x-1)(y-1), \qquad Y=y, \qquad Z=\frac{z}{y-1}.
\]
For instance, the formula for $\Psi_C^W$ above can be rewritten as:
\[
\sum_{v\geq 0}M^W_{C_v}(x,y) \frac{z^v}{v!}
=
F\left(\frac{2z}{y-1}, y\right)^{\frac12({(x-1)(y-1)-1})}
F\left(\frac{yz}{y-1}, y^2\right)^2.
\]
\end{remark}

This also allows us to give formulas for the respective arithmetic characteristic polynomials
\[
\chi^\Lambda_A(q)= (-1)^r q^{n-r} M^\Lambda_A(1-q,0).
\]
Some representative formulas are the following:

\begin{theorem}\label{thm:charZ} The arithmetic characteristic polynomials of the classical root systems \textbf{in their integer lattices} are
\begin{eqnarray*}
\chi^\ZZ_{A_{n-1}}(q) &=& q(q-1)\cdots (q-n+1)\\
\chi^\ZZ_{B_n}(q) &=& (q-2)(q-4)(q-6)\cdots (q-2n+4)(q-2n+2)(q-2n) \\
\chi^\ZZ_{C_n}(q) &=& (q-2)(q-4)(q-6)\cdots (q-2n+4)(q-2n+2) (q-n) \\
\chi^\ZZ_{D_n}(q) &=& (q-2)(q-4)(q-6)\cdots (q-2n+4) (q^2 - 2(n-1)q + n(n-1))
\end{eqnarray*}
\end{theorem}

These are similar but not equal to the classical characteristic polynomials of the root systems. \cite{Athanasiadis, Sthyps} The following formula is quite different from the classical one.

\begin{theorem}\label{thm:charWA} The arithmetic characteristic polynomials of the root systems $A_{n-1}$ \textbf{in their weight lattices} are given by
\[
\chi^W_{A_{n-1}}(q) =  \frac{n!}q \sum_{m | n} (-1)^{n-\frac nm} \varphi(m) {q/m \choose n/m}
\]
In particular, when $n\geq 3$ is prime, 
\[
\chi^W_{A_{n-1}}(q) = (q-1)(q-2)\cdots (q-n+1) + (n-1)(n-1)!.
\]
\end{theorem}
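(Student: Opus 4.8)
The plan is to derive Theorem~\ref{thm:charWA} directly from the arithmetic Tutte generating function $\Psi^W_A$ of Theorem~\ref{thm:TutteW}, by specializing it to recover the characteristic polynomial and then reading off a single coefficient. First I would reduce the statement to a coefficient extraction. By definition $\chi^W_{A_{n-1}}(q)=(-1)^r q^{\,d-r}M^W_{A_{n-1}}(1-q,0)$, where $r=n-1$ is the rank of $A_{n-1}$ and $d$ is the rank of the ambient lattice; since $A_{n-1}$ spans its weight lattice $\Lambda_W(A_{n-1})$, which has rank $n-1$, we have $d=r$ and the prefactor $q^{\,d-r}$ equals $1$. (This is exactly where the weight lattice behaves differently from the integer lattice of Theorem~\ref{thm:charZ}, where the corresponding prefactor is $q$.) Using the coboundary--Tutte dictionary, the evaluation $(x,y)=(1-q,0)$ corresponds to $(X,Y)=(q,0)$ and $\psi_\A(q,0)=(-1)^{r}M_\A(1-q,0)$, so that $\chi^W_{A_{n-1}}(q)=\psi^W_{A_{n-1}}(q,0)$. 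Finally, by the convention of Definition~\ref{def:Tuttegen} we have $\psi^W_{A_{n-1}}(X,Y)=\tfrac{n!}{X}\,[Z^n]\,\Psi^W_A(X,Y,Z)$ for $n\ge 1$, so it remains to compute $[Z^n]\,\Psi^W_A(q,0,Z)$.

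To do that I would evaluate $\Psi^W_A$ at $Y=0$. With the convention $0^0=1$, only the terms $k=0,1$ of $F(\alpha,\beta)=\sum_{k\ge 0}\alpha^k\beta^{\binom k2}/k!$ survive when $\beta=0$, so $F(\alpha,0)=1+\alpha$; hence the bracketed product in Theorem~\ref{thm:TutteW} becomes $\prod_{j=0}^{m-1}(1+\omega_m^{\,j}Z)=1-(-Z)^m=1+(-1)^{m-1}Z^m$, using the standard identity $\prod_{\zeta^m=1}(1-\zeta w)=1-w^m$. Thus $\Psi^W_A(q,0,Z)=\sum_{m\ge 1}\varphi(m)\bigl((1+(-1)^{m-1}Z^m)^{q/m}-1\bigr)$, and expanding each summand by the binomial series shows that the coefficient of $Z^n$ is supported on the divisors $m\mid n$ (with $k=n/m$), the subtracted $1$'s contributing nothing since $n\ge 1$. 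This gives $[Z^n]\,\Psi^W_A(q,0,Z)=\sum_{m\mid n}\varphi(m)(-1)^{(m-1)n/m}\binom{q/m}{n/m}$; multiplying by $n!/q$ and using the identity $(m-1)\tfrac nm=n-\tfrac nm$ yields exactly the claimed formula. For the prime case $n\ge 3$ only $m=1$ and $m=n$ occur: the $m=1$ term is $\binom qn$ and the $m=n$ term is $(-1)^{n-1}(n-1)\tfrac qn=(n-1)\tfrac qn$, the sign being $+1$ because $n$ is odd, so $\tfrac{n!}{q}$ times the sum is $(q-1)(q-2)\cdots(q-n+1)+(n-1)\,(n-1)!$.

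The only genuinely delicate point is the rank bookkeeping in the reduction step — recognizing that the prefactor $q^{\,d-r}$ is $1$ here, not $q$ as for $\ZZ^n$, and that the hypothesis $n\ge 3$ (not merely $n$ prime) is precisely what makes the correction term positive; everything after that is a mechanical specialization of Theorem~\ref{thm:TutteW}. As a cross-check and an alternative route, the same formula follows directly from Theorem~\ref{thm:finfield}: for prime powers $q+1$ with $q$ divisible by all multiplicities $m(B)$, $B\subseteq A_{n-1}$, the complement $\R(A_{n-1})$ inside $\Hom(\Lambda_W(A_{n-1}),\FF_{q+1}^*)$ is the set of tuples $(z_1,\dots,z_n)\in(\FF_{q+1}^*)^n$ with distinct entries and $z_1\cdots z_n=1$; counting these by the characters of the cyclic group $\FF_{q+1}^*$ together with the M\"obius function of the partition lattice reproduces the divisor sum, and exhibits it as a signed, distinct-entries analogue of the divisor-sum count of $q$-colored necklaces on $n$ beads.
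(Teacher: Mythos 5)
Your proof is correct and is essentially the paper's own argument, modulo which avatar of $\Psi^W_A$ you start from: you specialize the roots-of-unity product form of Theorem \ref{thm:TutteW} at $Y=0$ and use $F(\alpha,0)=1+\alpha$ together with $\prod_{j=0}^{m-1}(1+\omega_m^jZ)=1-(-Z)^m$, whereas the paper substitutes into the equivalent reformulation (\ref{eq:An'}) and uses $CG_m(-1,z)=\tfrac1m\log(1-(-z)^m)$; both routes land on $\sum_m\varphi(m)\bigl((1-(-z)^m)^{q/m}-1\bigr)$ and the same coefficient extraction. Your rank bookkeeping is in fact slightly more careful than the paper's exposition: for the weight lattice of type $A$ the ambient rank equals $r=n-1$, so the $q^{d-r}$ prefactor in the definition of $\chi$ is $1$ and the $q$ multiplying the left-hand side comes solely from the extra factor of $X$ in Definition \ref{def:Tuttegen} (the paper's parenthetical ``where we now have $q^{n-r}=q$'' reads like a carry-over from the $\ZZ^n$ case of Theorem \ref{thm:charZ} and does not match the weight-lattice accounting, though the displayed identity it justifies is nevertheless correct).
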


When  $n$ is odd and $n|q$ we obtain an intriguing combinatorial interpretation:

\begin{theorem}\label{cor:charW}
If $n,q$ are integers with $n$ odd and $n|q$, then $\chi^W_{A_{n-1}}(q)/n!$ equals the number of cyclic necklaces with $n$ black beads and $q-n$ white beads.
\end{theorem}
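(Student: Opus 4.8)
The plan is to compare the closed formula for $\chi^W_{A_{n-1}}(q)$ from Theorem~\ref{thm:charWA} with the classical count of cyclic necklaces obtained from Burnside's lemma, and to observe that the two expressions agree term by term once $n$ is odd and $n \mid q$.

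First I would recall the necklace count. A cyclic necklace with $n$ black beads and $q-n$ white beads is an orbit of the cyclic group $C_q = \langle \sigma \rangle$, acting by rotation, on the set of $\binom{q}{n}$ binary strings of length $q$ having exactly $n$ black beads. For $0 \le k \le q-1$, the rotation $\sigma^k$ partitions the $q$ positions into $\gcd(k,q)$ cycles, each of length $e := q/\gcd(k,q)$, and a string is fixed by $\sigma^k$ exactly when it is constant on each cycle. Such a fixed string has a number of black beads divisible by $e$, and the fixed strings with exactly $n$ black beads number $\binom{q/e}{n/e}$ when $e \mid n$, and $0$ otherwise. Since for each divisor $e$ of $q$ there are precisely $\varphi(e)$ values of $k$ with $q/\gcd(k,q) = e$, Burnside's lemma gives that the number of such necklaces is
\[
N(n,q) = \frac{1}{q} \sum_{e \mid \gcd(n,q)} \varphi(e) \binom{q/e}{n/e}.
\]

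Now I would impose the hypotheses. Since $n \mid q$ we have $\gcd(n,q) = n$, so the sum in $N(n,q)$ ranges over all divisors $e$ of $n$; moreover each $q/e$ is then a genuine nonnegative integer, so every $\binom{q/e}{n/e}$ is an ordinary binomial coefficient. On the other hand, Theorem~\ref{thm:charWA} gives
\[
\frac{\chi^W_{A_{n-1}}(q)}{n!} = \frac{1}{q} \sum_{m \mid n} (-1)^{n - n/m}\, \varphi(m) \binom{q/m}{n/m},
\]
and the only difference from $N(n,q)$ is the sign $(-1)^{n-n/m}$. For any $m \mid n$ the integer $n/m$ also divides $n$; since $n$ is odd, both $n$ and $n/m$ are odd, hence $n - n/m$ is even and $(-1)^{n-n/m} = 1$. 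Therefore the two sums coincide term by term and $\chi^W_{A_{n-1}}(q)/n! = N(n,q)$, as claimed.

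I do not expect a genuine obstacle here: the substance is entirely contained in Theorem~\ref{thm:charWA}, and the remaining work is the standard Burnside computation together with the two elementary observations that $n \mid q$ makes the index sets match (and the binomial coefficients honest integers) and that $n$ odd kills all of the signs. The only point requiring a little care is the bookkeeping in the Burnside sum --- correctly identifying that $e = q/\gcd(k,q)$ ranges over the divisors of $q$ with multiplicity $\varphi(e)$, and that the constraint ``$e$ divides the number of black beads'' becomes ``$e \mid n$''.
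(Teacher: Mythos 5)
Your proof is correct and takes essentially the same approach as the paper. Both apply Burnside's lemma to count the cyclic necklaces as $\frac1q\sum_{m\mid\gcd(n,q)}\varphi(m)\binom{q/m}{n/m}$, and both then observe that $n\mid q$ makes this index set coincide with the $\{m : m\mid n\}$ appearing in Theorem~\ref{thm:charWA}, while $n$ odd forces every $n-n/m$ to be even and thus kills all the signs $(-1)^{n-n/m}$.
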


\subsection{\textsf{Comparing the two methods.}}

For all but one of the formulas above, we will give one ``finite field" proof and one ``graph enumeration" proof. Each method has its advantages.
When the underlying lattice is $\ZZ^n$, the finite field method seems preferrable, as it gives more straightforward proofs than the graph enumeration method. However, this is no longer the case with more complicated lattices. In particular, we only have one proof for the formula for $\Psi_A^W$, using graph enumeration. There should also be a ``finite field method" proof for this result, but it seems more difficult and less natural.

\subsection{{\textsf{An example: $\mathbf{\mathsf{C_2}}$.}}}\label{sec:C_2}
Before going into the proofs, we carry out an example. 
Consider the root system $C_2 = \{2e_1, e_1+e_2, 2e_2, e_1-e_2\}$ in $\ZZ^2$. This vector configuration is drawn in red in Figure \ref{fig:C2} with its associated zonotope
\[
Z(C_2) = \{a(2e_1) + b(e_1+e_2) + c(2e_2) + d(e_1-e_2) \, : \, 0 \leq a,b,c,d \leq 1\}.
\]
Figure \ref{fig:C2} also shows the two natural lattices for $C_2$: $\ZZ^2 = \Lambda_W(C_2)$ and its index 2 sublattice $\Lambda_R(C_2)$. 
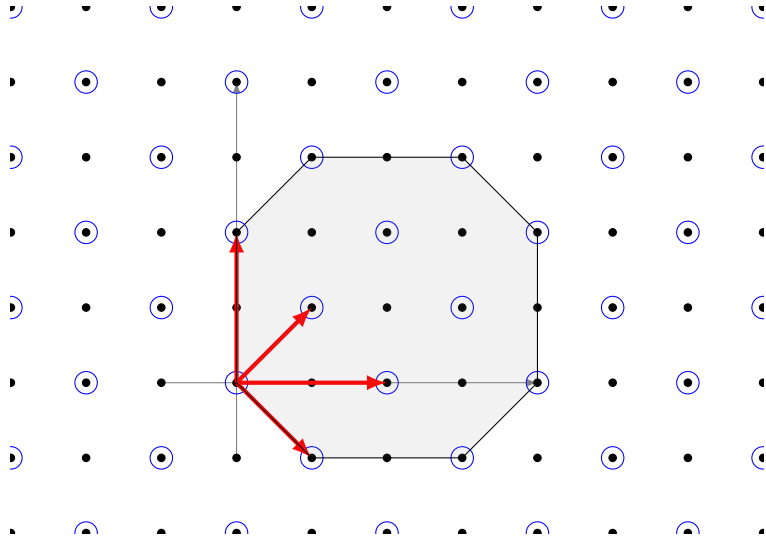
\begin{figure}[ht]\label{fig:C2}
  \centering
  \begin{tikzpicture}
    \coordinate (Origin)   at (0,0);
    \coordinate (XAxisMin) at (-1,0);
    \coordinate (XAxisMax) at (4,0);
    \coordinate (YAxisMin) at (0,-1);
    \coordinate (YAxisMax) at (0,4);
    \draw [thin, gray,-latex] (XAxisMin) -- (XAxisMax);
    \draw [thin, gray,-latex] (YAxisMin) -- (YAxisMax);

    \clip (-3,-2) rectangle (7cm,5cm); 
    \pgftransformcm{1}{0}{0}{1}{\pgfpoint{0cm}{0cm}}

    \foreach \x in {-7,-6,...,7}{
      \foreach \y in {-7,-6,...,7}{
        \node[draw,circle,inner sep=3 pt, blue] at (2*\x,2*\y) {};
            
      }
    }
    \foreach \x in {-7,-6,...,7}{
      \foreach \y in {-7,-6,...,7}{
        \node[draw,circle,inner sep=3pt, blue] at (2*\x+1,2*\y+1) {};
            
      }
    }
    
    \foreach \x in {-7,-6,...,7}{
      \foreach \y in {-7,-6,...,7}{
        \node[draw,circle,inner sep=1pt, fill] at (\x,\y) {};

      }
    }

    \draw [ultra thick,-latex,red] (Origin)
        -- ($(0,2)$) node [above left] {};
   \draw [ultra thick,-latex,red] (Origin)
       -- ($(2,0)$) node [below right] {};
   \draw [ultra thick,-latex,red] (Origin)
      -- ($(1,1)$) node [below right] {};
   \draw [ultra thick,-latex,red] (Origin)
      -- ($(1,-1)$) node [above right] {};
    \draw [thin,-latex,black, fill=gray, fill opacity=0.1] (0,0)
         -- ($(0,2)$)
         -- ($(1,3)$) -- ($(3,3)$) 
         -- ($(4,2)$) -- ($(4,0)$)
         -- ($(3,-1)$) -- ($(1,-1)$) -- cycle;
  \end{tikzpicture}
  \caption{The vector configuration $C_2=\{(0,2),(1,1),(2,0),(1,-1)\}$ and the associated zonotope $Z(C_2)$.}
  \label{Example}
\end{figure}

Let us compute the arithmetic Tutte polynomial $M_{C_2}(x,y)$ with respect to $\ZZ^2$:
\noindent $\bullet$ The empty subset has multiplicity 1 and hence contributes a $(x-1)^{2}$ term. \\
\noindent $\bullet$
The singletons $\{(2,0)\}$ and $\{(0,2)\}$ have multiplicity 2 and $\{1,1\}$ and $\{1,-1\}$ have multiplicity 1, contributing $2(x-1)+2(x-1)+(x-1)+(x-1)=6(x-1)$. \\
\noindent $\bullet$ All pairs are bases. The pair $\{(2,0),(0,2)\}$ has multiplicity $4$, and the other 5 pairs have multiplicity 2, so we get a total contribution of $5\cdot2+4=14.$\\
\noindent $\bullet$ Each triple has rank 2 and multiplicity 2, for a contribution of $4 \cdot 2(y-1)=8(y-1).$\\
\noindent $\bullet$
The whole set contributes  $2(y-1)^2$.

Therefore the arithmetic Tutte polynomial of $C_2$ in $\ZZ^2$ is
\begin{eqnarray*}
M_{C_2}(x,y)&=&(x-1)^2+6(x-1)+14+8(y-1)+2(y-1)^2\\
&=&x ^2+ 2y^2 + 4x + 4y + 3.
\end{eqnarray*}

\noindent This predicts that the Ehrhart polynomial of the zonotope of $C_2$ is
\[
E_{Z(C_2)}(t)=t^2M_{C_2}(1+1/t,1)=14t^2+6t^2+1
\]
which in turn predicts that the zonotope has area $14$, $14+6+1=21$ lattice points, and $14-6+1=9$ interior lattice points.

Consider instead the arithmetic Tutte polynomial with respect to the root lattice:
\begin{eqnarray*}
M^R_{C_2}(x,y)&=&(x-1)^2+4(x-1)+7+4(y-1)+(y-1)^2\\
&=&x ^2+ y^2 + 2x + 2y + 1.
\end{eqnarray*}
\noindent Now the Ehrhart polynomial is
\[
E_{Z(C_2)}^R(t)=t^2M^R_{C_2}(1+1/t,1)=7t^2+4t+1
\]
which in turn predicts that the zonotope has area $7$, $7+4+1=12$ lattice points, and $7-4+1=4$ interior lattice points with respect to the root lattice.

%

\section{\textsf{The finite field method for hypertoric arrangements}}\label{sec:finitefield}

Recall that the \emph{arithmetic Tutte polynomial} of a vector configuration $A \subseteq \Lambda$ is
\[
M_{A}(x,y)= \sum_{B \subseteq {A}} m(B)(x-1)^{r(A)-r(B)}(y-1)^{|B|-r(B)}
\]
where, for each $B \subseteq A$, $m(B)$ is the index of $\ZZ B$ as a sublattice of $\span B \cap \Lambda$.

\subsection{\textsf{The finite field method for hypertoric arrangements: the proof.}}\label{sec:finitefieldproof}

We start by restating Theorem \ref{thm:finfield} more explicitly. We now omit the first statement in the previous formulation, which follows from the second one by setting $t=0$.

\begin{reptheorem}{thm:finfield}
\blue{($\FF^* = \FF_{q+1}^*$: Finite field method)}
Let $A$ be a collection of vectors in a lattice $\Lambda$ of rank $n$. Let $q+1$ be a prime power such that $m(B) | q $ for all $B \subseteq A$ and consider the torus $T = \Hom(\Lambda,\FF_{q+1}^*) \cong (\FF_{q+1}^*)^n$. Let $\T(A)$ be the corresponding arrangement of hypertori in $T$.
Then 
\[
\sum_{p \in T} t^{h(p)} = (t-1)^r q^{n-r} M_A \left( \frac{q+t-1}{t-1}, t \right) 
\]
where $h(p)$ is the number of hypertori of $\T(A)$ that $p$ lies on. \end{reptheorem}

\textit{Remark}: 
The finite field method is cleanly expressed in terms of the arithmetic coboundary polynomial:
\[
\sum_{p \in (\FF_{X+1}^*)^n} Y^{h(p)} = X^{n-r} \psi(X,Y)
\]
whenever $X+1$ is a prime power with $m(B)|X$ for all $B \subseteq A$.

\begin{proof}[Proof of Theorem \ref{thm:finfield}]
The key observation is the following:

\begin{lemma}\label{finlem}
For any $B\subseteq \Lambda$ and any $q$ such that $m(B) | q$, we have
$
\left| \bigcap_{b\in B} T_b \right| = m(B)q^{n-\rk(B)}, 
$
where 
$
T_b = \{t \in T \, : \, t(b) = 1\}
$
is the hypertorus associated to $b$.
\end{lemma}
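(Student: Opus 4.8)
The plan is to compute the size of $\bigcap_{b \in B} T_b$ directly from the structure theory of finitely generated abelian groups. Write $r = \rk(B)$ and let $\Lambda_B = \span B \cap \Lambda$ be the saturation of $\ZZ B$ in $\Lambda$; this is a rank-$r$ primitive (saturated) sublattice, so $\Lambda / \Lambda_B \cong \ZZ^{n-r}$ is free. First I would observe that an element $t \in T = \Hom(\Lambda, \FF_{q+1}^*)$ satisfies $t(b) = 1$ for all $b \in B$ if and only if $t$ kills the subgroup $\ZZ B$, equivalently it factors through the quotient $\Lambda / \ZZ B$. Hence $\bigcap_{b\in B} T_b = \Hom(\Lambda/\ZZ B, \FF_{q+1}^*)$, and the task reduces to counting homomorphisms from the finitely generated abelian group $G := \Lambda / \ZZ B$ into the cyclic group $\FF_{q+1}^* \cong \ZZ/q\ZZ$.

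Next I would analyze $G$. Since $\ZZ B \subseteq \Lambda_B \subseteq \Lambda$ with $[\Lambda_B : \ZZ B] = m(B)$ by definition of the multiplicity, and $\Lambda/\Lambda_B$ free of rank $n-r$, there is a short exact sequence $0 \to \Lambda_B/\ZZ B \to \Lambda/\ZZ B \to \Lambda/\Lambda_B \to 0$ which splits (the quotient is free), so $G \cong (\Lambda_B/\ZZ B) \oplus \ZZ^{n-r}$, where $\Lambda_B/\ZZ B$ is a finite abelian group of order $m(B)$. Therefore
\[
\left|\textstyle\bigcap_{b\in B} T_b\right| = \left|\Hom(G, \ZZ/q\ZZ)\right| = \left|\Hom(\Lambda_B/\ZZ B, \ZZ/q\ZZ)\right| \cdot \left|\Hom(\ZZ^{n-r}, \ZZ/q\ZZ)\right| = \left|\Hom(\Lambda_B/\ZZ B, \ZZ/q\ZZ)\right| \cdot q^{n-r}.
\]
It remains to show $|\Hom(H, \ZZ/q\ZZ)| = |H|$ for any finite abelian group $H$ of order dividing $q$ (we only need $|H| = m(B)$, and the hypothesis $m(B) \mid q$ applies). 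Writing $H \cong \bigoplus_i \ZZ/d_i\ZZ$, one has $|\Hom(\ZZ/d_i\ZZ, \ZZ/q\ZZ)| = \gcd(d_i, q) = d_i$ since each $d_i \mid m(B) \mid q$; multiplying gives $|\Hom(H,\ZZ/q\ZZ)| = \prod_i d_i = |H| = m(B)$. Combining, $\left|\bigcap_{b\in B} T_b\right| = m(B) \, q^{n-r}$, as claimed.

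The only genuinely delicate point is the divisibility bookkeeping: one must be careful that every invariant factor $d_i$ of the finite group $\Lambda_B/\ZZ B$ divides $m(B)$, hence divides $q$, so that $\gcd(d_i, q) = d_i$ rather than something smaller — this is exactly where the hypothesis $m(B)\mid q$ is used, and it is why the theorem needs that assumption. A lightweight alternative to the invariant-factor computation is to note that $\ZZ/q\ZZ \cong \FF_{q+1}^*$ contains a (unique) cyclic subgroup of each order dividing $q$; since $\exp(\Lambda_B/\ZZ B)$ divides $m(B) \mid q$, the group $\FF_{q+1}^*$ contains an isomorphic copy of the (full) character group of $\Lambda_B/\ZZ B$, and for finite abelian groups the character group is (noncanonically) isomorphic to the group itself, giving $|\Hom(\Lambda_B/\ZZ B, \FF_{q+1}^*)| = m(B)$ directly. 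Either way the computation is short; everything else is the standard splitting of the exact sequence and the identification $\bigcap_b T_b = \Hom(\Lambda/\ZZ B, \FF_{q+1}^*)$.
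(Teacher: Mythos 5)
Your proof is correct and follows essentially the same route as the paper: identify $\bigcap_{b\in B} T_b$ with $\Hom(\Lambda/\ZZ B, \FF_{q+1}^*)$, decompose $\Lambda/\ZZ B$ into its free part of rank $n-\rk B$ and a finite torsion part of order $m(B)$, and count homomorphisms from each piece using $m(B)\mid q$. The only cosmetic difference is that you exhibit the torsion part explicitly as $\Lambda_B/\ZZ B$ via the split exact sequence with the saturation $\Lambda_B=\span B\cap\Lambda$, whereas the paper invokes the structure theorem on $\Lambda/\ZZ B$ directly and then observes $m(B)=d_1\cdots d_k$; both say the same thing.
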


\begin{proof}[Proof of Lemma \ref{finlem}]
An element $t\in  \bigcap_{b\in B} T_b$ is a homomorphism $t:\Lambda \rightarrow \FF_{q+1}^*$ such that $B \subseteq \textrm{ker}(t)$; this is equivalent to $\ZZ B \subseteq \textrm{ker}(t)$.
Those maps are in bijection with the maps $\hat{t}:\Lambda/\ZZ B\rightarrow \FF_{q+1}^*$, and we proceed to enumerate them.

By the Fundamental Theorem of Finitely Generated Abelian Groups, 
%
%
we can write $\Lambda / \ZZ B$ uniquely as 
\[
\Lambda/\ZZ B\cong  \ZZ^{n-\rk B} \times \ZZ/d_1\times \ZZ/d_2\times \cdots \ZZ/d_k
\]
where $d_1|d_2|\cdots|d_k$ are its invariant factors. Notice that $m(B) = d_1d_2\cdots d_k$.

Our desired map is determined by maps
\[
t_0: \ZZ^{n - \rk B} \rightarrow \FF_{q+1}^* \quad \textrm{ and } \quad
t_i: \ZZ/d_i \rightarrow \FF_{q+1}^* \quad \textrm{ for } {1 \leq i \leq k}.
\]
for the individual factors. There are $q^{n-\rk B}$ choices for the map $t_0$. 
Since $\FF_{q+1}^* \cong \ZZ/q$ and $d_i$ divides $m(B)$ which in turn divides $q$, there are exactly $d_i$ homomorphisms $\ZZ/d_i \rightarrow \FF_{q+1}^*$ for each $i$. Therefore the number of homomorphisms we are looking for is $q^{n-\rk B} d_1d_2 \cdots d_k  = q^{n-\rk B} m(B)$.
\end{proof}

We are now ready to complete the proof of Theorem \ref{thm:finfield}. 
For each $p \in T$, let $H(p)$ be the set of hypertori of $\T(A)$ in which $p$ is contained, so $h(p)= |H(p)|$.  
Then we have
\begin{eqnarray*}
(t-1)^r q^{n-r}M_{A}\left(\frac{q+t-1}{t-1},t\right)
&=& \displaystyle \sum_{B \subseteq {A}}m(B)q^{n-r(B)}(t-1)^{|B|}  \\
&=&  \displaystyle \sum_{B \subseteq {A}}\left|\bigcap_{b\in B} T_b \right|(t-1)^{|B|}  \\
&=&  \displaystyle \sum_{B \subseteq {A}}\sum_{\stackrel{p \in T_b}{\textrm{ for all } b \in B}}(t-1)^{|B|}  \\    
&=&  \displaystyle \sum_{p\in (\mathbb{F}_{q+1}^*)^m}\sum_{B \subseteq  H(p)}(t-1)^{|B|}  \\
&=&  \displaystyle \sum_{p\in (\mathbb{F}_{q+1}^*)^m}(1+(t-1))^{h(p)}
\end{eqnarray*}
as desired.
\end{proof}

\textit{Remark}: Dirichlet's theorem on primes in arithmetic progressions \cite{Dirichlet} guarantees that for any $A \subseteq \Lambda$ there are infinitely many primes $q+1$ which satisfy the hypothesis of Theorem \ref{thm:finfield}. In particular, by computing the left hand side of Theorem \ref{thm:finfield} for enough such primes, we can obtain $M_A(x,y)$ by polynomial interpolation.

%

\textit{Remark}: The choice of $q$ is critical, and different choices of $q$ will give different answers. For example, another case of interest is when $q$ is a prime power such that $m(B)|q-1$ for all $B \subseteq A$. As Br\"anden and Moci point out \cite{BM}, 
if each $d_i$ divides $q-1$, then each $d_i$ is relatively prime with $q$; so in the proof of Lemma \ref{finlem}, there will be just one trivial choice for each $t_i$. Therefore, in this case the left hand side of Theorem \ref{thm:finfield} computes the \emph{classical} coboundary and Tutte polynomials.
\\

\section{\textsf{Computing Tutte polynomials using the finite field method}}
\label{sec:compute1}


In this section we apply Theorem \ref{thm:finfield}, the finite field method for hypertoric arrangements, to give formulas for all but one of the arithmetic Tutte polynomials of the classical root systems with respect to the integer, root, and weight lattices, proving Theorems \ref{thm:TutteZ}, \ref{thm:TutteR}, \ref{thm:TutteW}. The procedure will be similar to the one used in \cite{Ardila} for classical Tutte polynomials, but the arithmetic features of this computation will require new ideas. We proceed in increasing order of difficulty.

\begin{reptheorem}{thm:TutteZ}
The arithmetic Tutte generating functions of the classical root systems \textbf{in their integer lattices} are
\begin{eqnarray*}
\Psi_A &=& F(Z,Y)^X\\
\Psi_B &=& F(2Z,Y)^{\frac{X}2-1}F(Z,Y^2)F(YZ,Y^2)\\
\Psi_C &=& F(2Z,Y)^{\frac{X}2-1}F(YZ,Y^2)^2\\
\Psi_D &=& F(2Z,Y)^{\frac{X}2-1}F(Z,Y^2)^2
\end{eqnarray*}
\end{reptheorem}

\begin{proof}
Here $\Lambda = \ZZ^n$ and $T_{\ZZ}=\textrm{Hom}(\ZZ^n, \FF_{q+1}^*)$ is isomorphic to $(\FF_{q+1}^*)^n$. Each of the hypertori is a solution to a multiplicative equation.  For example, $T_{e_1+e_2}$ is the set of solutions to $x_1x_2=1$. We need to enumerate points in 
$(\FF_{q+1}^*)^n$ by the number of hypertori that contain them.

\medskip
\noindent 
\textbf{\textsf{Type $\mathsf{A}$}}: 
First we prove the formula when $X+1=q+1$ is prime. We need to compute $\psi_{A_n}(X,Y)$, and we use the finite field method.
For each $p \in (\FF_{X+1}^*)^n$ let $P_k=\{j \in [n] \, : \, a_j = k\}$ for $k=1, 2, \ldots, X$. This is a bijection between points in $(\FF_{X+1}^*)^n$ and ordered partitions $[n]=P_1 \sqcup \cdots \sqcup P_X$. Furthermore, $h(p)$ is the number of pairs of equal coordinates of $p$, so we have $h(p)= 
{|P_1| \choose 2} + \cdots + {|P_X| \choose 2}$. The finite field method then says that for $n \geq 1$
\[
X\psi_{A_{n-1}}(X,Y)= \sum_{[n] = P_1 \sqcup \cdots \sqcup P_X} Y^{{|P_1| \choose 2} + \cdots + \binom{|P_X|}{2}}.
\]
Notice that this equation holds for \textbf{any} prime $X+1$, since the collection $A_{n-1}$ is unimodular in $\ZZ^n$, so $m(B)=1$ for all subsets of it. The compositional formula for exponential generating functions \cite[Theorem 5.1.4]{EC2} then gives
\begin{equation}\label{eq}
1+X\sum_{n \geq 1} \psi_{A_{n-1}}(X,Y) \frac{Z^n}{n!} = 
\left(\sum_{n \geq 0} Y^{n \choose 2} \frac{Z^n}{n!}\right)^X.
\end{equation}
Having established this equation whenever $X+1$ is prime, we now need to prove it as an equality of formal power series in $X,Y,Z$. To do it, we observe that in each monomial on either side, the $X$-degree is less than or equal to the $Z$-degree. On the left-hand, side this follows from the fact that the $X^{n-r}\psi(X,Y) = \sum_B m(B)X^{n-r(B)}(Y-1)^{|B|}$ has $X$-degree equal to $n$. On the right-hand side, which has the form $(1+ZG(Y,Z))^X$, this follows from the binomial theorem.

We conclude that, for any fixed $a$ and $b$, the coefficients of $Y^aZ^b$ on both sides of (\ref{eq}) are polynomials in $X$. Since they are equal for infinitely many values of $X$, they are equal as polynomials. The desired result follows.

\medskip
\noindent 
\textbf{\textsf{Type $\mathsf{B}$}}: 
Again let $X+1=q+1$ be prime and now assume that $X$ is a multiple of $m(B)$ for all $B \subseteq B_n$ for a particular $n$.
%
%
%
%
%
 Split $\FF_{X+1}^*$ into singletons or pairs containing an element and its inverse. In other words, choose $a_1=1,a_2=-1, a_3, a_4,\ldots,a_{X/2+1}$ such that for any $a\in \FF_{X+1}^*$, there exists $k$ such that $a=a_k$ or $a=a_k^{-1}$. 
 
Now for each $p \in (\FF_{X+1}^*)^n$, let $P_{k}=\{j\in\left[n\right]|\hspace{5pt}p_j=a_k\hspace{5pt}\textrm{or}\hspace{5pt}p_j=a_k^{-1}\}$ for $k=1,2, \ldots, X/2+1$.
Now we claim that
\[
h(p)=|P_1|^2+|P_{2}|(|P_{2}|-1)+ \binom{|P_3|}{2}+\cdots+\binom{|P_{X/2+1}|}{2}
\]
There are three types of contributions to $h(p)$. If $k \notin \{1,2\}$, then $a_k \neq a_k^{-1}$, and each pair $c,d$ of coordinates in $P_k$ causes $p$ to be on $T_{e_c-e_d}$ (if $p_c = p_d$) or on $T_{e_c+e_d}$ (if $p_c=p_d^{-1}$), for a total of $\binom{|P_{i}|}{2}$ hypertori. 
When $k=2$, every pair $c,d \in P_2$ causes $p$ to be on $T_{e_c-e_d}$ \textbf{and} on $T_{e_c+e_d}$, for a total of $|P_{2}|(|P_{2}|-1)$ hyperplanes. 
Finally, when $k=1$, every pair $c,d \in P_1$ causes $p$ to be on two hypertori, and every element $c \in P_1$ causes it to be on $T_{e_c}$, for a total of $|P_{1}|^2$ hyperplanes. 

Moreover, for each partition $\left[n\right]=P_1 \sqcup \cdots \sqcup P_{X/2+1}$ there are $2^{|P_3|+\cdots+|P_{X/2+1}|}$ points $p$ assigned to that partition, because for each $i \in P_k$ with $k \neq 1,2$ we need to choose whether $p_i=a_k$ or $p_i=a_k^{-1}$.
Therefore
\[
\psi_{B_n}(X,Y)=\displaystyle \sum_{[n] = P_1 \sqcup P_2 \sqcup \cdots \sqcup P_{X/2+1}} Y^{|P_1|^2}Y^{|P_{2}|(|P_{2}|-1)}2^{|P_3|}Y^{\binom{|P_3|}{2}} \cdots 2^{|P_{X/2+1}|}Y^{\binom{|P_{X/2+1}|}{2}}
\]
This says that, for each fixed $n$, the coefficients of $Z^n/n!$ (which are polynomial in $X$ and $Y$) in both sides of 
\begin{equation}\label{eq:PsiB}
\Psi_B =\left(\displaystyle \sum_{n\geq 0}Y^{n^2}\dfrac{Z^n}{n!}\right)\left(\displaystyle \sum_{n\geq 0}Y^{n(n-1)}\dfrac{Z^n}{n!}\right)\left(\displaystyle \sum_{n\geq 0}2^n Y^{\binom {n}{2}}\dfrac{Z^n}{n!}\right)^{X/2-1}.
\end{equation}
are equal to each other for infinitely many values of $X$. Therefore they are equal as polynomials, and the above formula holds at the level of formal power series in $X,Y,$ and $Z$, as desired.

\medskip
\noindent 
\textbf{\textsf{Types $\mathsf{C}$ and $\mathsf{D}$}}: We omit these calculations, which are very similar (and slightly easier) than the calculation in type $B$.
\end{proof}

\begin{reptheorem}{thm:TutteR} 
The arithmetic Tutte generating functions of the classical root systems \textbf{in their root lattices} are
\begin{eqnarray*}
\Psi^R_A &=& F(Z,Y)^X\\
\Psi^R_B &=& F(2Z,Y)^{\frac{X}2-1}F(Z,Y^2)F(YZ,Y^2)\\
\Psi^R_C &=& \frac12 F(2Z,Y)^{\frac{X}2-1} \left[F(2Z,Y)+F(YZ,Y^2)^2\right]\\
\Psi^R_D &=& \frac12 F(2Z,Y)^{\frac{X}2-1}\left[F(2Z,Y) + F(Z,Y^2)^2\right]\\
\end{eqnarray*}
\end{reptheorem}

\begin{proof}
The formulas in types $A$ and $B$ are the same as those in Theorem \ref{thm:TutteZ}. The proofs in types $C$ and $D$ are very similar to each other; we carry out the proof for type $C$ explicitly. Let $X+1 \equiv 1 \mod 4$ be a prime.

Consider the lattices
\[
\Lambda_R(C_n) =  \{\varSigma \, a_ie_i \, : \, a_i \in \ZZ, \varSigma a_i \textrm{ is even}\}, \qquad \ZZ^n
\]
and the corresponding tori
\[
T_R = \Hom(\Lambda_R, \FF_{X+1}^*), \qquad 
T_\ZZ = \Hom(\ZZ^n, \FF_{X+1}^*).
\]
We need to compute 
\[
\psi^R_{C_n}(X,Y) = \sum_{f \in T_R}Y^{h(f)}
\]
and to do it we will split the torus $T_R$ into two parts:
\begin{eqnarray*}
T_R^{\textrm{even}} &:=& \{f \in T_R \, : \, f(2e_1) \textrm{ is a square in } \FF_{X+1}^*\}\\
T_R^{\textrm{odd}} &:=& \{f \in T_R \, : \, t(2e_1) \textrm{ is \textbf{not} a square in } \FF_{X+1}^*\}
\end{eqnarray*}
Since $\FF_{X+1}^*$ has as many squares as non-squares, we have $|T_R^{\textrm{even}}| = |T_R^{\textrm{odd}}| = X^n/2$. We will compute the contributions of these two pieces to $\Psi^R_{C_n}$ separately; we call them $\Psi^{R, \textrm{even}}_{C_n}$ and $\Psi^{R, \textrm{odd}}_{C_n}$, respectively.

\begin{lemma}\label{lemma:even} We have
\[
\Psi^{R, \textrm{even}}_{C_n} = \frac12 F(2Z,Y)^{\frac{X}2-1} F(YZ,Y^2)^2.
\]
\end{lemma}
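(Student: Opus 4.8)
The plan is to carry out the finite field method of Theorem~\ref{thm:finfield} restricted to the subset $T_R^{\textrm{even}}\subseteq T_R$. First I would unwind what it means for $f\in\Hom(\Lambda_R(C_n),\FF_{X+1}^*)$ to lie in $T_R^{\textrm{even}}$. Since $\Lambda_R(C_n)$ is spanned by the $2e_i$ together with the $e_i-e_j$, a homomorphism $f$ is determined by the values $f(2e_i)=:b_i$ and $f(e_i-e_j)$, subject to the compatibility $f(2e_i)f(2e_j)^{-1}=f(e_i-e_j)^2$, i.e. $b_ib_j^{-1}$ is a square for all $i,j$. Hence all $b_i$ lie in the same coset of the squares, and $f\in T_R^{\textrm{even}}$ exactly when that coset is the squares themselves, i.e. every $b_i$ is a square. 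In that case $f$ extends to a homomorphism $\ZZ^n\to\FF_{X+1}^*$: choosing a square root $a_i$ of $b_i$ (two choices each, hence the $2^n$ discrepancy accounted for by $|T_R^{\textrm{even}}|=X^n/2$) recovers a point $p=(a_1,\dots,a_n)\in(\FF_{X+1}^*)^n$ with $p_i^2=b_i$, and $f\mapsto p^{\pm}$ is a $2^n$-to-one map onto the set of $p$ with all coordinates squares. Crucially, $h(f)$ — the number of hypertori of $\T(C_n)$ through $f$ — is computed using the \emph{same} combinatorial rule as in Type~$C$ of Theorem~\ref{thm:TutteZ}: $f\in T_{e_i-e_j}$ iff $a_i=\pm a_j$ with the sign reading off $a_i a_j^{-1}$, $f\in T_{e_i+e_j}$ iff $a_i a_j=\pm1$, and $f\in T_{2e_i}$ iff $b_i=a_i^2=1$; here the ambiguity in $a_i$ does not matter because all the relevant conditions are expressible in terms of $b_i=f(2e_i)$ and $f(e_i-e_j)$.

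Next I would run the enumeration. Since $X+1\equiv 1\pmod 4$, $-1$ is a square, so the coordinates $a_i$ range over the square elements of $\FF_{X+1}^*$, of which there are $X/2$, and $\{a,a^{-1}\}$-pairs behave as in Type~$C$. Split the squares into $1$ (a fixed point, $a=a^{-1}$, but note $-1$ is also a square so actually $1$ and $-1$ are both squares and both fixed... ) — more carefully: among the $X/2$ squares, the solutions of $a^2=1$ are $a=1$ and $a=-1$, both of which are squares since $X+1\equiv1\pmod4$; the remaining $X/2-2$ squares pair up into $(X/2-2)/2=X/4-1$ inverse-pairs $\{a_k,a_k^{-1}\}$. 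Wait—$X/2-2$ need not be divisible by $2$ unless $X\equiv0\pmod4$; this is exactly the kind of parity bookkeeping that will need care, and I would resolve it by taking $X+1$ in a suitable residue class (the method only needs infinitely many primes, available by Dirichlet as remarked). Grouping coordinates by which inverse-pair of squares they fall in gives a partition $[n]=Q_1\sqcup Q_{-1}\sqcup R_1\sqcup\cdots$, with $h(f)=|Q_1|^2 + |Q_{-1}|(|Q_{-1}|-1)? $ — actually here one must track that being on $T_{2e_i}$ requires $a_i^2=1$, i.e. $i\in Q_1\cup Q_{-1}$, whereas $a_i=-1$ gives $a_i^2=1$ too, so both $Q_1$ and $Q_{-1}$ contribute the ``extra'' hypertorus per element, while $T_{e_i+e_j}$ for $i,j\in Q_{-1}$ comes from $a_ia_j=1$. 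Reassembling via the compositional formula for exponential generating functions \cite[Thm.~5.1.4]{EC2} yields a product: the $Q_1$ and $Q_{-1}$ blocks each contribute $\sum_n Y^{n^2}Z^n/n!=F(YZ,Y^2)$, and the $X/4-1$ free-pair blocks contribute $F(2Z,Y)^{X/4-1}$, but since each of $T_R^{\textrm{even}}$'s points comes with the $2^n$ overcount one divides by $2^n$; I expect the bookkeeping to collapse to $\tfrac12 F(2Z,Y)^{X/2-1}F(YZ,Y^2)^2$ after matching against the analogous Type-$C$, $\ZZ^n$ computation, with the leading $\tfrac12$ coming from $|T_R^{\textrm{even}}|=X^n/2$.

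Finally, as in the proof of Theorem~\ref{thm:TutteZ}, I would promote the identity from ``true for infinitely many primes $X+1$ in the right residue class'' to ``true as formal power series in $X,Y,Z$'': for each fixed $n$ the coefficient of $Z^n/n!$ on both sides is a polynomial in $X$ (on the left by the explicit sum over partitions of $[n]$, whose number of parts is bounded, and on the right by the binomial theorem applied to $F(2Z,Y)^{X/2-1}$), and two polynomials agreeing at infinitely many points coincide.

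The main obstacle I anticipate is precisely the parity/counting bookkeeping: correctly identifying which coordinate values $a_i$ force membership in the ``short'' hypertori $T_{2e_i}$ (the role of $\pm1$ and the fact that $-1$ is a square when $X+1\equiv1\pmod4$), correctly counting how many squares pair into inverse-pairs versus are fixed, and correctly tracking the $2^n$ overcount coming from the two square roots of each $b_i$. Getting the constant $\tfrac12$ and the exponent $X/2-1$ to come out exactly — rather than, say, $X/4-1$ with an extra factor — is where the argument is delicate; the cleanest check is to verify that $\Psi^{R,\textrm{even}}_{C_n}+\Psi^{R,\textrm{odd}}_{C_n}$ reproduces the known $\Psi_C$ from Theorem~\ref{thm:TutteZ} in the appropriate limit, which also serves as a sanity check on the bookkeeping.
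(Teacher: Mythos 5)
The paper's proof is much more direct than what you propose and sidesteps the enumeration entirely: it observes that the restriction map $i^*\colon T_\ZZ\to T_R$ induced by the inclusion $\Lambda_R\hookrightarrow\ZZ^n$ has image exactly $T_R^{\textrm{even}}$, is two-to-one, and preserves the statistic $h$. Therefore
\[
\sum_{f\in T_R^{\textrm{even}}} Y^{h(f)} \;=\; \tfrac12 \sum_{f\in T_\ZZ} Y^{h(f)},
\]
and the right-hand side is exactly the type-$C$, $\ZZ^n$ count already computed in Theorem~\ref{thm:TutteZ}. No fresh partition bookkeeping is required.

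Your blind attempt contains a genuine counting error that would derail the enumeration. You claim that choosing square roots of each $b_i=f(2e_i)$ independently gives $2^n$ extensions of $f$ to $\ZZ^n$, and that $f\mapsto p$ is ``$2^n$-to-one onto the set of $p$ with all coordinates squares.'' Neither half of this is right. An element $f\in T_R$ already records the values $f(e_i-e_j)$, and any extension $\tilde f\colon\ZZ^n\to\FF_{X+1}^*$ must satisfy $\tilde f(e_i)/\tilde f(e_j)=f(e_i-e_j)$; so once $\tilde f(e_1)$ is fixed (two choices, one for each square root of $b_1$), all other $\tilde f(e_i)$ are forced. There are exactly two extensions, not $2^n$. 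Moreover, the preimages under $i^*$ run over \emph{all} of $T_\ZZ=(\FF_{X+1}^*)^n$, not over the $(X/2)^n$ points with square coordinates: indeed any $p\in(\FF_{X+1}^*)^n$ restricts to an $f$ with $f(2e_i)=p_i^2$ a square, so $i^*$ maps all of $T_\ZZ$ onto $T_R^{\textrm{even}}$. Your count $|T_R^{\textrm{even}}|=X^n/2$ is itself inconsistent with a ``$2^n$-to-one onto $(X/2)^n$ points'' picture as soon as $n\ge 2$. The downstream decomposition into $Q_1,Q_{-1}$ and inverse-pair blocks inherits this miscounting (the relevant blocks are organized by $b_i$, not by a chosen $a_i$, and the per-block generating functions you write down do not combine to the stated formula). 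Your closing sanity-check — that $\Psi^{R,\textrm{even}}_{C_n}+\Psi^{R,\textrm{odd}}_{C_n}$ should recover $\Psi_{C_n}$ — is a good instinct, but note that the paper does not need it as a check: it proves the lemma \emph{by} showing $\Psi^{R,\textrm{even}}_{C_n}=\frac12\Psi_{C_n}$ directly via the two-to-one map, which is both the correct count and the cleanest argument.
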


\begin{proof}[Proof of Lemma \ref{lemma:even}]
Since $\Hom(-, G)$ is covariant, the inclusion $i:\Lambda_R \rightarrow \ZZ^n$ gives us a map $i^*:T_\ZZ \rightarrow T_R$. This map is simply given by restriction: it maps $f \in T_\ZZ$ to $i^*f \in T_R$ given by $i^*f(x) = f(i(x)) = f(x)$ for $x \in \Lambda_R$. We claim that $\Im i^* = T_R^{\textrm{even}}$. First notice that if $i^*f \in \Im i^*$ for some $f \in T_\ZZ$ then $i^*f(2e_1) = f(2e_1) = f(e_1)^2$ is a square. In the other direction, assume that $f \in T_R^{\textrm{even}}$ and let $f(2e_1) = x^2$. Define $f_1, f_2: \ZZ^n \rightarrow \FF_{q+1}^*$ by
\begin{eqnarray*}
&f_1(v) =\,  \hat{t}(v) \, \textrm{ and } f_1(v+e_1) = \, x\,  \hat{t}(v) \qquad & \textrm{ for } v \in \Lambda_R. \\
&f_2(v) =\,  \hat{t}(v) \, \textrm{ and } f_2(v+e_1) = \, -x\,  \hat{t}(v) \qquad & \textrm{ for } v \in \Lambda_R. 
\end{eqnarray*}
Clearly $f_1, f_2 \in T_\ZZ$ and $f = i^*f_1 = i^*f_2$. This proves the claim. 

Also, since $|T_R| = |T_\ZZ|= 2|T_R^{\textrm{even}}| = q^n$ and we have constructed two preimages for each element of $\Im i^* = T_R^{\textrm{even}}$, we conclude that $i^*$ is 2-to-1. We also have that $h(\hat{f}) = h(f_1) = h(f_2)$. Therefore
\begin{eqnarray*}
\sum_{f \in T_R^{\textrm{even}}} Y^{h(f)}&=&
\sum_{f_1 \, : \, f \in T_R} Y^{h(f_1)} = \sum_{f_2 \, : \, f \in T_R} Y^{h(f_2)} \\
&=& \frac12 \left(\sum_{f_1 \, : \, f \in T_R} Y^{h(f_1)} + \sum_{f_2 \, : \, f \in T_R} Y^{h(f_2)}\right)  = \frac12 \sum_{f \in T_\ZZ} Y^{h(f)},
\end{eqnarray*}
from which the result follows by Theorem \ref{thm:TutteZ}.
\end{proof}

\begin{lemma}\label{lemma:odd} We have
\[
\Psi^{R, \textrm{odd}}_{C_n} = \frac12 F(2Z,Y)^{\frac{X}2} 
\]
\end{lemma}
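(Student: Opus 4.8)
The plan is to analyze directly the structure of the characters $f \in T_R^{\textrm{odd}}$, using the fact that now $f(2e_1)$ is a non-square in $\FF_{X+1}^*$. The key point is that on $T_R^{\textrm{odd}}$ none of the hypertori $T_{2e_i}$ can contain $f$ — indeed $f(2e_i) = f(2e_1)$ times a square (since $2e_i - 2e_1 \in \Lambda_R$ and $f$ restricted to squares is determined as in the previous lemma), hence $f(2e_i)$ is a non-square and in particular $f(2e_i) \neq 1$. So the hypertori $T_{2e_i}$ play no role, and $h(f)$ only counts the equations coming from $e_i - e_j$ and $e_i + e_j$. First I would set this up carefully: parametrize $T_R^{\textrm{odd}}$ by choosing, as in the Type $B/C$ computation over $\ZZ^n$, a set of representatives $a_1, \dots, a_{X/2}$ of the $\pm1$-orbits among the \emph{non-square} cosets is not quite right — instead the cleaner route is to use the index-two inclusion $\Lambda_R \subseteq \ZZ^n$ and a twist.

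Concretely, I would fix one non-square $g \in \FF_{X+1}^*$ and a square root situation: since $X+1 \equiv 1 \bmod 4$, $-1$ is a square, so $g$ non-square forces $-g$ non-square too, and the map $v \mapsto g^{\langle \1, v\rangle}$ is not a well-defined character of $\ZZ^n$ but the point is that any $f \in T_R^{\textrm{odd}}$ differs from an element of $T_R^{\textrm{even}}$ by the character $\Lambda_R \to \FF_{X+1}^*$ sending $\sum a_i e_i \mapsto g^{(\sum a_i)/2}$ (well-defined since $\sum a_i$ is even). Thus $T_R^{\textrm{odd}} = \chi_g \cdot T_R^{\textrm{even}}$ where $\chi_g$ is that fixed twist. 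Now for $f = \chi_g \cdot f_0$ with $f_0 \in T_R^{\textrm{even}}$, a pair $c,d$ contributes to $h(f)$ via $e_c - e_d$ iff $f(e_c - e_d) = 1$ iff $f_0(e_c - e_d) = 1$ (the twist is trivial on $e_c - e_d$), and via $e_c + e_d$ iff $f(e_c+e_d) = 1$ iff $f_0(e_c+e_d) = g^{-1}$. So on each $\pm1$-orbit block of coordinates of the underlying $\ZZ^n$-character lifting $f_0$, exactly one of the two equations can hold generically, and I would redo the block-enumeration of the Type $C$ (over $\ZZ^n$) argument with this shift. The upshot I expect is that the generating function collapses: the ``$+$'' equations get shifted off of the value $1$ except in degenerate blocks, and one obtains a plain exponential contribution, giving $\Psi^{R,\textrm{odd}}_{C_n}$ equal to $\frac12 F(2Z,Y)^{X/2}$ — the $\frac12$ from $|T_R^{\textrm{odd}}| = X^n/2$, the $F(2Z,Y)^{X/2}$ from the $X/2$ generic two-element blocks each contributing a factor $\sum_n 2^n Y^{\binom n2} Z^n/n! = F(2Z,Y)$, and the special blocks ($P_1$, $P_2$ in the earlier notation) now contributing trivially because their defining hypertori $T_{e_c} $, $T_{2e_c}$, or the simultaneous $e_c \pm e_d$ coincidences, are incompatible with $f$ being odd.

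The main obstacle I anticipate is bookkeeping the ``special blocks'': in the $\ZZ^n$ computation the blocks $P_1$ (coordinates equal to $\pm 1 = 1$) and $P_2$ (coordinates equal to $-1$) were exactly the ones producing the extra factors $F(Z,Y^2)$ and $F(YZ,Y^2)$, and I must check that the odd twist genuinely kills both of them — i.e. that no $f \in T_R^{\textrm{odd}}$ can have a coordinate whose value squares to $1$ in a way that creates a $T_{2e_i}$ incidence, and that the ``both $e_c - e_d$ and $e_c + e_d$'' phenomenon cannot occur. This amounts to showing that the relevant coordinate values are non-squares and hence cannot equal $\pm 1$; once that is nailed down, the enumeration reduces cleanly to $X/2$ identical generic blocks and the formula $\frac12 F(2Z,Y)^{X/2}$ follows, completing (together with Lemma \ref{lemma:even} and $\Psi^R_{C_n} = \Psi^{R,\textrm{even}}_{C_n} + \Psi^{R,\textrm{odd}}_{C_n}$) the proof of the Type $C$ case of Theorem \ref{thm:TutteR}.
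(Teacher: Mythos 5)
Your plan to realize $T_R^{\textrm{odd}}$ as a coset $\chi_g \cdot T_R^{\textrm{even}}$, pull the sum back to $T_\ZZ$ via the $2$-to-$1$ map of Lemma \ref{lemma:even}, and then track how the twist shifts the incidence conditions is a genuinely different (and, if done correctly, cleaner) route than the paper's, which parametrizes $T_R^{\textrm{odd}}$ directly by the $n$-tuple of non-squares $(f(2e_1),\dots,f(2e_n))$ together with $2^{n-1}$ sign choices and then sub-partitions each block into two indistinguishable halves. Your opening observations are all correct: $T_{2e_i}$ is never met on $T_R^{\textrm{odd}}$, $e_c-e_d$ incidences for $f=\chi_g f_0$ are detected by $p_c=p_d$ where $p\in T_\ZZ$ lifts $f_0$, and $e_c+e_d$ incidences are detected by $p_cp_d=g^{-1}$.

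The gap is in the block structure. You take the blocks to be the ``$\pm1$-orbits'' $\{j: p_j\in\{a,a^{-1}\}\}$, exactly as in the untwisted Type $B/C$ computation over $\ZZ^n$. But the twisted $e_c+e_d$ condition $p_cp_d=g^{-1}$ pairs a coordinate value $a$ with $g^{-1}a^{-1}$, not with $a^{-1}$, and since $a$ and $g^{-1}a^{-1}$ lie in \emph{different} $\{a,a^{-1}\}$-orbits (because $a^2=g^{-1}$ and $g=1$ are both impossible), this decomposition produces \emph{cross-block} $e_c+e_d$ incidences that your factorization into a product over blocks does not account for. This is also visible as an internal inconsistency in the write-up: the $\{a,a^{-1}\}$ pairing on $\FF_{X+1}^*$ has two fixed points $\pm1$ and only $X/2-1$ generic two-element orbits, whereas you need $X/2$ identical factors of $F(2Z,Y)$ with no special blocks. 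The correct pairing is by the fixed-point-free involution $a\mapsto g^{-1}a^{-1}$ (fixed-point-free because $g^{-1}$ is a non-square), which has exactly $X/2$ two-element orbits. With blocks $P_k=\{j:p_j\in\{a_k,g^{-1}a_k^{-1}\}\}$ one checks that every pair $c,d$ in the same block contributes exactly one hypertorus (namely $e_c-e_d$ if $p_c=p_d$ and $e_c+e_d$ if $p_c\ne p_d$), and no pair in different blocks contributes any. Hence $h(f)=\sum_k\binom{|P_k|}{2}$, and counting the $\prod_k 2^{|P_k|}$ lifts of each block structure gives $\sum_{p\in T_\ZZ}Y^{h(\chi_g i^*p)}$ the exponential generating function $F(2Z,Y)^{X/2}$; the $2$-to-$1$ map then supplies the factor $\frac12$. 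With this repair your argument is correct and in fact avoids the paper's sub-decomposition of each block into two halves $P_k^{\pm}$.
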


\begin{proof}
Let $f \in T_R^{\textrm{odd}}$. Notice that $f(2e_i) = f(2e_1)f(e_i-e_1)^2$ is not a square for any $i$, so we have
\[
\sum_{f \in T_R^{\textrm{odd}}} Y^{h(f)} = \sum_{\stackrel{c_1, \ldots, c_n}{\textrm{ non-squares}}}\,\,  \sum_{f \in T_R^{\textrm{odd}} \, : f(2e_k)=c_k}  Y^{h(f)} 
\]
For each choice of $c_1, \ldots, c_n$ there are $2^{n-1}$ possible choices for $f$ since, for each $i$, $f(e_i-e_{i+1})$ must be one of the two square roots of $f(2e_i)/f(2e_{i+1}) = c_ic_{i+1}^{-1}$, and we can choose freely which one it is. (The product of the two non-squares $c_i$ and $c_{i+1}^{-1}$ is indeed a square.) This determines the remaining values of $f$.

Now notice that $-1$ is a square since $X+1 \equiv 1 \mod 4$, so the $X/2$ non-squares of $\FF_{X+1}^*$ can be split into pairs $\{\alpha_k, \alpha_k^{-1}\}$ for $k=1, \ldots, X/4$. As before, define $P_k = \{i \in [n] \, : \, f(2e_i) = \alpha_k \textrm{ or } \alpha_k^{-1}\}.$ We claim that the inner sum equals
\begin{equation}\label{innereq}
\sum_{f \in T_R^{\textrm{odd}} \, : f(2e_k)=c_k}  Y^{h(f)} = 
2^{X/4-1} \prod_{k=1}^{X/4} \left(\frac12 \sum_{i=0}^{|P_k|} {|P_k| \choose i} Y^{ {i \choose 2} + {|P_k|-i \choose 2}}\right)
\end{equation}
Notice that if $f \in T_{e_i \pm e_j}$, then $f(e_i \pm e_j)=1$ so $f(2e_i)f(2e_j)^{\pm1} = 1$, and $i,j \in P_k$ for some $k$. Therefore each hypertorus containing $f$ is ``contributed" by one of $P_1, \ldots, P_{X/4}$.

Assume  that $P_1 = \{1, \ldots, b\}$ and $f(2e_1) = \cdots = f(2e_b) = c$ for simplicity.\footnote{This choice makes the notation simpler. The reader is invited to see how the argument (very slightly) changes for other choices of $c_1, \ldots, c_n$.}
For $i=1, \ldots, b-1$ we have that $f(e_i-e_{i+1})^2 = f(2e_i)/f(2e_{i+1}) = 1$, so we can choose $f(e_i-e_{i+1})=1$ or $-1$. 
When we have made these $b-1$ decisions, we will be left with one of the $2^{b-1}$ partitions of $P_1$ into two indistinguishable parts $P_1^+$ and $P_1^-$, so that $f(e_i-e_j)=1$ for $i,j$ in the same part, and $f(e_i-e_j)=-1$ for $i,j$ in different parts. This puts $f$ on ${|P_1^+| \choose 2} + {|P_1^-| \choose 2}$ hypertori. (Notice that when $i,j \in P_1$, $f(e_i+e_j) = \pm c \neq 1$ because $c$ is not a square.) This explains the terms $\frac12 \sum
{|P_k| \choose i} Y^{ {i \choose 2} + {|P_k|-i \choose 2}}$ in (\ref{innereq}). However, $f$ is not fully determined yet; for each $1 \leq a \leq \frac{X}4 - 1$
we still have to choose the value of $f(e_i-e_j)$ for some $i \in P_a$ and $j \in P_{a+1}$. There are $2^{X/4-1}$ such choices. This proves (\ref{innereq}).

It remains to observe that for fixed $P_1, \ldots, P_{\frac X 4}$, there are $2^n$ choices of $c_1, \ldots, c_k$. Therefore
\begin{eqnarray*}
\sum_{f \in T_R^{\textrm{odd}}} Y^{h(f)} &=& 
\sum_{[n] = P_1 \sqcup \cdots \sqcup P_{X/4}} 2^{n+ X/4-1} \prod_{k=1}^{X/4} \left(\frac12 \sum_{i=0}^{|P_k|} {|P_k| \choose i} Y^{ {i \choose 2} + {|P_k|-i \choose 2}}\right) \\
&=& 
\frac12 \sum_{[n] = P_1 \sqcup \cdots \sqcup P_{X/4}}  \prod_{k=1}^{X/4} \left(\sum_{i=0}^{|P_k|} 2^{|P_k|} {|P_k| \choose i} Y^{{i \choose 2}+{|P_k|-i \choose 2}}\right)
\end{eqnarray*}
which has exponential generating function
\[
\frac12 \left[ \sum_{n \geq 0} \left(\sum_{i=0}^{n} 2^{n} {n \choose i} Y^{{i \choose 2}+{n-i \choose 2}}\right)\frac{Z^n}{n!}\right]^{X/4} = \frac12 \left(\sum_{n \geq 0}2^nY^{n \choose 2} \frac{Z^n}{n!}\right)^{X/2} = \frac12 F(2Z,Y)^{X/2}
\]
as desired.
\end{proof}
To conclude, we simply combine Lemmas \ref{lemma:even} and \ref{lemma:odd} with the finite field method. Again, some care with ``good primes" is needed: we proceed as in Theorem \ref{thm:TutteZ}.
\end{proof}

\begin{reptheorem}{thm:TutteW} The arithmetic Tutte generating functions of the classical root systems \textbf{in their weight lattices} are
\[
\Psi^W_A = 
\sum_{n \in \NN} \varphi(n)\left(\left[F(Z,Y)F(\omega_n Z, Y) F(\omega_n^2 Z, Y) \cdots F(\omega_n^{n-1} Z, Y)\right]^{X/n}-1 \right)
\]
where $\varphi$ is Euler's totient function and $\omega_n$ is a primitive $n$th root of unity, 
\begin{eqnarray*}
\Psi^W_B &=& F(2Z,Y)^{\frac{X}4-1}F(Z,Y^2)F(YZ,Y^2)\left[F(2Z,Y)^{\frac{X}4} + F(-2Z,Y)^{\frac{X}4}\right]
\\
\Psi^W_C &=& F(2Z,Y)^{\frac{X}2-1}F(YZ,Y^2)^2\\
\Psi^W_D &=& F(2Z,Y)^{\frac{X}4-1}F(Z,Y^2)^2\left[F(2Z,Y)^{\frac{X}4} + F(-2Z,Y)^{\frac{X}4}\right]
\end{eqnarray*}
\end{reptheorem}

%

\begin{proof} 
\noindent 
\textbf{\textsf{Type $\mathsf{A}$}}: We postpone this proof until Section \ref{sec:computebygraphs}.

\noindent 
\textbf{\textsf{Type $\mathsf{B}$}}: 
In this proof we will restrict our attention to primes $X+1$ such that $X$ is a multiple of $4$. Consider the lattices
\[
\ZZ^n = \ZZ\{e_1, \ldots, e_n\}, \qquad 
\Lambda_W =  \ZZ\{e_1, \ldots, e_n, (e_1+\cdots+e_n)/2\} 
\]
and the corresponding tori
$$
T_\ZZ = \Hom(\ZZ^n, \FF_{X+1}^*), \qquad T_W= \Hom(\Lambda_W, \FF_{X+1}^*).
$$
Again, the inclusion $i: \ZZ^n \rightarrow \Lambda_W$ gives a map $i^*: T_W \rightarrow T_{\ZZ}$. Clearly, 
\[
\textrm{Im } i^* = T_W^{\textrm{even}} := \{f \in T_\ZZ \, : \,  f(e_1) f(e_2) \cdots f(e_n) \textrm{ is a square.}\}
\]
and every $f \in  T_W^{\textrm{even}}$ with $f(e_1)\cdots f(e_n) = x^2$ is the image of exactly two maps $f^{\pm1}$, which extend $f$ by defining $f^{\pm 1}(e_1+\cdots+e_n)/2 = \pm x$. Also $h(f)=h(f^+) = h(f^-).$ Therefore
\[
\psi^W_{B_n}(X,Y)= 2 \sum_{p \in T_\ZZ^{\textrm{even}}} Y^{h(p)}
\]
for all primes $X+1$ such that $X$ is a multiple of $m(B)$ for all $B \subseteq B_n$.
%

Now we proceed as in Theorem \ref{thm:TutteZ} for type $B$. Choose $a_1=1, a_2=-1, a_3, \ldots,$ $a_{X/4+1}, a_{X/4+2}, \ldots a_{X/2}$ such that for any $a\in \FF_{X+1}^*$, there exists $k$ such that $a=a_k$ or $a=a_k^{-1}$. Furthermore, since $X$ is a multiple of $4$, $-1$ is a square, and we can assume that $a_1, a_2, \ldots, a_{X/4}+1$ and their inverses are squares, while $a_{X/4}+2, \ldots a_{X/2}$ and their inverses are non-squares.
 
Again, for each $p \in (\FF_{X+1}^*)^n$, let $P_{k}=\{j\in\left[n\right]|\hspace{5pt}p(e_j)=a_k\hspace{5pt}\textrm{or}\hspace{5pt}p(e_j)=a_k^{-1}\}$ for $k=1,2, \ldots, X/2+1$. We still have that 
\[
h(p)=|P_1|^2+|P_{2}|(|P_{2}|-1)+ \binom{|P_3|}{2}+\cdots+\binom{|P_{X/2+1}|}{2}.
\]
Also, for each partition $\left[n\right]=P_1 \sqcup \cdots \sqcup P_{X/2+1}$ there are $2^{|P_3|+\cdots+|P_{X/2+1}|}$ points $p$ assigned to it.

However, we are now interested only in those $p$ such that $p(e_1)\cdots p(e_n)$ is a square. Since the product of non-squares is a square, this holds if and only $|P_{X/4+2}| + \cdots + |P_{X/2}+1|$ is even. Therefore $\psi^W_{B_n}(X,Y) / 2$ equals
\[
\sum_{\stackrel{[n] = P_1 \sqcup P_2 \sqcup \cdots \sqcup P_{X/2+1}}{|P_{X/4+2}| + \cdots + |P_{X/2}+1| \textrm{ is even}.}} 
Y^{|P_1|^2}Y^{|P_{2}|(|P_{2}|-1)}2^{|P_3|}Y^{\binom{|P_3|}{2}} \cdots 2^{|P_{X/2+1}|}Y^{\binom{|P_{X/2+1}|}{2}}.
\]
Now, referring to the formula 
(\ref{eq:PsiB})
for $\Psi_B$ in the proof of Theorem \ref{thm:TutteZ}, we can pick out only the ``even" terms in $\Psi_B$ by means of the following generating function:
\begin{eqnarray*}
\frac{\Psi_B^W}2 &=&
\left(\displaystyle \sum_{n\geq 0}Y^{n^2}\dfrac{Z^n}{n!}\right) \cdot
\left(\displaystyle \sum_{n\geq 0}Y^{n(n-1)}\dfrac{Z^n}{n!}\right) \cdot
\left(\displaystyle \sum_{n\geq 0}2^n\,Y^{\binom {n}{2}}\dfrac{Z^n}{n!}\right)^{X/4-1} \cdot
\\
&& \cdot \,  \frac12 \left[\left(\displaystyle \sum_{n\geq 0}2^n\,Y^{\binom {n}{2}}\dfrac{Z^n}{n!}\right)^{X/4} +
\left(\displaystyle \sum_{n\geq 0}(-1)^n\, 2^n\, Y^{\binom {n}{2}}\dfrac{Z^n}{n!}\right)^{X/4} \right].
\end{eqnarray*}
In the last factor, by introducing minus signs appropriately, we are eliminating the odd terms (which do not contribute to $\Psi_B^W$)  and doubling the even terms (which do contribute). Dividing by $2$, we get the desired formula.

\medskip
\noindent 
\textbf{\textsf{Type $\mathsf{C}$}}: The weight lattice and integer lattice coincide, so $\Psi_C^{W}=\Psi_C$. 

\medskip
\noindent 
\textbf{\textsf{Type $\mathsf{D}$}}: We omit the proof, which is very similar to type $B$ and slightly easier.
\end{proof}

\section{\textsf{Computing arithmetic Tutte polynomials by counting graphs}}
\label{sec:compute2}

In this section we present a different approach towards computing arithmetic Tutte polynomials. 
The key observation is that these computations are closely related to the enumeration of (signed) graphs. To find the arithmetic Tutte polynomials of $A_n, B_n, C_n,$ and $D_n$ with respect to the various lattices of interest, it becomes necessary to count (unsigned / signed) graphs according to (three / six) different parameters. In Section \ref{subsec:graphs} we carry out this enumeration, which may be of independent interest, in Theorems \ref{thm:masterA} and \ref{thm:master}. Then, in Section \ref{subsec:Tutte}, we explain the relationship between signed graphs and classical root systems, and  obtain our main Theorems \ref{thm:Tutte}, \ref{thm:TutteZ}, \ref{thm:TutteR}, and \ref{thm:TutteW} as corollaries.

\subsection{\textsf{Enumeration of graphs and signed graphs}}\label{subsec:graphs}

\subsubsection{\textsf{Enumeration of graphs}}

In this section we consider simple graphs; that is, undirected graphs without loops and parallel edges. Let $g(c,e,v)$ be the number of graphs with $c$ connected components, $e$ edges, and
$v$ vertices labelled $1, \ldots, v$. Let $g'(e,v)$ be the number of \textbf{connected} graphs with $e$ edges and $v$ labelled vertices. Let
\[
G(t,y,z) = \sum_{c,e,v} g(c,e,v) t^c y^e \frac{z^v}{v!}, \qquad
CG(y,z) = \sum_{e,v} g'(e,v) y^e \frac{z^v}{v!}
\]
The main result on graph enumeration that we will need is the following:
\begin{theorem}\label{thm:masterA}
The generating functions for enumerating (connected) graphs are
\[
G(t,y,z) = F(z, 1+y)^t, \qquad CG(y,z) = \log F(z,1+y),
\]
where 
\[
F(\alpha, \beta) = \sum_{n \geq 0} \frac{\alpha^n \, \beta^{n \choose 2}}{n!}
\]
is the deformed exponential function of Definition \ref{def:RR}.
\end{theorem}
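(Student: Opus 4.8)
The plan is to first establish the formula for $G(t,y,z)$ and then deduce the one for $CG(y,z)$ by the standard exponential-formula relationship between all structures and connected structures. For the first formula, I would fix the number of vertices $v$ and count all simple graphs on $v$ labelled vertices weighted by $t^{c}y^{e}$. The key observation is that a simple graph on $v$ vertices is determined by choosing, for each of the $\binom{v}{2}$ possible edges, whether it is present or not; if we ignore the component-counting variable $t$ for a moment, this immediately gives $\sum_{e} (\#\{\text{graphs on }v\text{ vertices with }e\text{ edges}\}) y^e = (1+y)^{\binom{v}{2}}$. Hence $\sum_v (1+y)^{\binom v 2} \frac{z^v}{v!} = F(z,1+y)$, which is exactly the $t=1$ case of the claimed identity. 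To bring in the variable $t$ tracking connected components, I would invoke the exponential formula (the compositional formula for exponential generating functions, \cite[Theorem 5.1.4]{EC2}, already cited in the excerpt): if $CG(y,z)$ is the e.g.f.\ (in $z$) for connected graphs weighted by $y^{e}$, then the e.g.f.\ for arbitrary graphs weighted by $t^{c}y^{e}$ is $\exp\!\big(t\, CG(y,z)\big)$, since a graph is a set of connected components and $c$ counts them. Thus $G(t,y,z) = \exp\!\big(t\,CG(y,z)\big) = \big(\exp CG(y,z)\big)^{t} = G(1,y,z)^{t} = F(z,1+y)^{t}$, using the $t=1$ computation for the base.

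Running this argument carefully, the cleanest logical order is: (1) compute $G(1,y,z) = F(z,1+y)$ directly from the edge-subset description above; (2) observe that $G(1,y,z)$ has constant term $1$ (the empty graph on zero vertices), so $\log G(1,y,z)$ is a well-defined formal power series; (3) apply the exponential formula to get $G(t,y,z) = \exp\!\big(t \log G(1,y,z)\big) = G(1,y,z)^{t} = F(z,1+y)^{t}$, which proves the first identity for all $t$; and (4) extract $CG(y,z) = \log G(1,y,z) = \log F(z,1+y)$, which is the second identity. Here I am reading $M^{t}$ for a power series $M$ with constant term $1$ and a formal variable $t$ as $\exp(t\log M)$, which is the standard convention and is exactly what the exponential formula produces.

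I do not expect a serious obstacle here; this is essentially a textbook application of the exponential formula, and the only subtlety is purely bookkeeping. The one point that deserves a sentence of care is the justification that a graph canonically decomposes as a \emph{set} (not a sequence) of connected graphs on the induced vertex partition — this is what licenses the use of the exponential (rather than ordinary) compositional formula, and it is why the factor $\frac{z^v}{v!}$ appears in the definitions of $G$ and $CG$. A second minor point is matching the argument $1+y$ of $F$ to the edge weight: each potential edge contributes a factor $(1+y)$ (absent $=1$, present $=y$) and there are $\binom{v}{2}$ of them, which is exactly the exponent of $\beta = 1+y$ in the $n=v$ term of $F(\alpha,\beta)=\sum_{n\ge 0}\alpha^{n}\beta^{\binom n2}/n!$ with $\alpha = z$. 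Once these two observations are in place, both formulas follow.
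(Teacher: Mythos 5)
Your proposal is correct and matches the paper's proof essentially step for step: both compute $G(1,y,z)=\sum_v(1+y)^{\binom v2}z^v/v! = F(z,1+y)$ by the edge-subset argument and then invoke the exponential (compositional) formula to obtain $G(t,y,z)=\exp(t\,CG(y,z))=F(z,1+y)^t$ and $CG(y,z)=\log F(z,1+y)$. The only difference is cosmetic ordering of the two steps; nothing substantive is added or missing.
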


The following proof is standard; see for instance \cite[Example 5.2.2]{EC2}. We include it since it is short, and it sets the stage for the more intricate proof of Theorem \ref{thm:master}, the main goal of this section.

\begin{proof}
The compositional formula \cite[Theorem 5.1.4]{EC2} gives
\[
G(t,y,z) = e^{t \, CG(y,z)}
\]
so to compute $G$ it suffices to compute $CG$. In turn, since the above formula gives $CG(y,z) = \log G(1,y,z)$, it suffices to compute $G(1,y,z)$. To do that, observe that there are ${v(v-1)/2 \choose e}$ graphs on $v$ labelled vertices and $e$ edges, so
\[
G(1,y,z) = \sum_{v,e} {{v \choose 2} \choose e}y^e \frac{z^v}{v!} = 
\sum_{v} (1+y)^{v \choose 2} \frac{z^v}{v!} = F(z,1+y).
\]
The desired formulas follow.
 \end{proof}

\subsubsection{\textsf{Enumeration of signed graphs}}

Our goal in this section is to compute the \emph{master generating function} for signed graphs, which enumerates signed graphs according to six parameters. This will be the signed graph analog of Theorem \ref{thm:masterA}.

\begin{definition}  \cite{Z2}
A \emph{signed graph} is a set of vertices, together with a set of positive edges, negative edges, and loops connecting them. A \emph{positive edge} (resp. \emph{negative edge}) is an edge between two vertices, labelled with a $+$ (\emph{resp.} with a $-$). A \emph{loop} connects a vertex to itself; we regard it as a negative edge. There is at most one positive and one negative edge connecting a pair of vertices, and there is at most one loop connecting a vertex to itself. 
\end{definition}

\begin{definition}
A signed graph $G$ is \emph{connected} if and only if its underlying graph $\overline{G}$ (ignoring signs) is connected. The \emph{connected components} of $G$ correspond to those of $\overline{G}$. A \emph{cycle} in $G$ corresponds to a cycle of $\overline{G}$; we call it \emph{balanced} if it contains an even number of negative edges, and \emph{unbalanced} otherwise. We say that $G$ is \emph{balanced} if all its cycles
are balanced.
\end{definition}

\begin{remark}
Note that a loop is an unbalanced cycle, so a signed graph with loops is necessarily unbalanced.
\end{remark}

\begin{definition}
Let $s(c_+,c_-,c_0,l,e,v)$ be the number of signed graphs with 
  $c_+$ balanced components, 
  $c_-$ unbalanced components with no loops, 
  $c_0$ components with loops (which are necessarily unbalanced), 
  $l$ loops, 
  $e$ (non-loop) edges, and
  $v$ vertices.
Let
\[
S(t_+,t_-,t_0,x,y,z) = \sum s(c_+,c_-,c_0,l,e,v)\, t_+^{c_+} \,t_-^{c_-}\, t_0^{c_0} \, x^l y^e \frac{z^v}{v!}
\]
be the \emph{master generating function} for signed graphs.
\end{definition}

\begin{figure}[h]
\begin{center}
\includegraphics[height=1in]{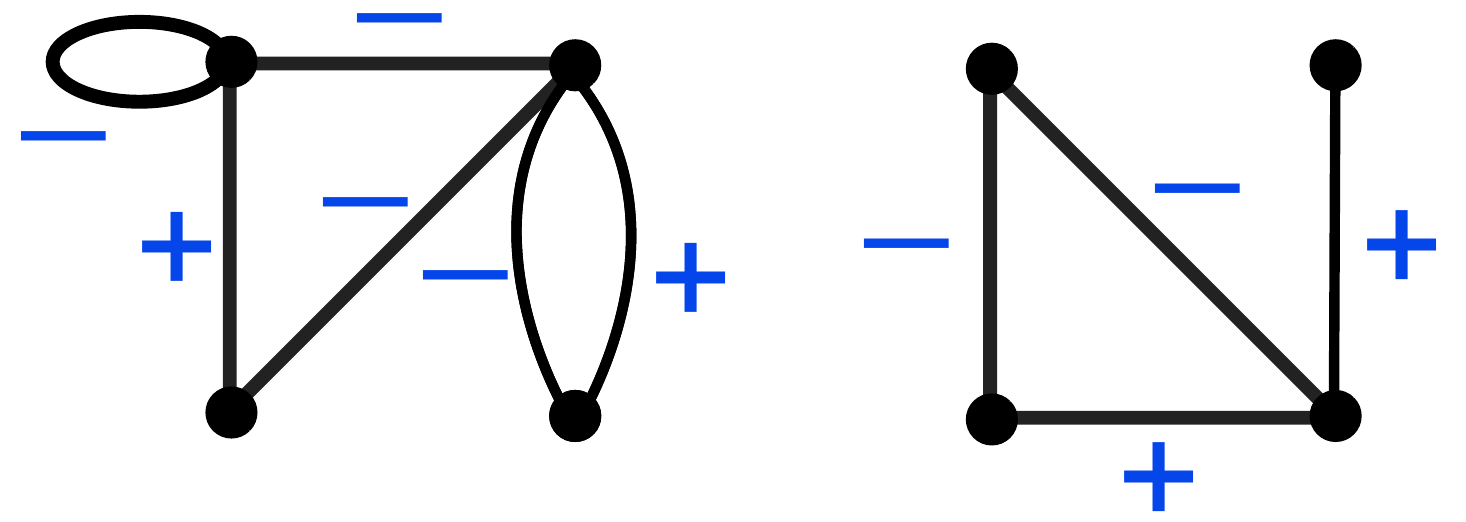} 
\caption{An unbalanced signed graph having one unbalanced and one balanced component.}
\end{center}
\end{figure}

The goal of this section is to prove the following formula.

\begin{theorem}\label{thm:master}
The master generating function for signed graphs is
\[
S(t_+,t_-,t_0,x,y,z) = F(2z, 1+y)^{\frac12(t_+ - t_-)} F(z, (1+y)^2)^{t_--t_0} F((1+x)z, (1+y)^2)^{t_0}
\]
where $F(\alpha, \beta)$ is the deformed exponential function of Definition \ref{def:RR}.
\end{theorem}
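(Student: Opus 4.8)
The plan is to mimic the structure of the proof of Theorem~\ref{thm:masterA}: first reduce the master generating function to the generating function for \emph{connected} signed graphs via a suitable form of the exponential/compositional formula, then compute the connected generating function by a direct count, and finally reassemble everything. The complication is that there are three \emph{types} of connected signed graphs (balanced, unbalanced-without-loops, and unbalanced-with-loops), each weighted by its own variable ($t_+$, $t_-$, $t_0$), so the first step must be an exponential formula over three families. Concretely, if $CS_+(x,y,z)$, $CS_-(x,y,z)$, $CS_0(x,y,z)$ denote the exponential generating functions (in $z$, with $x$ tracking loops and $y$ tracking non-loop edges) for connected signed graphs that are balanced, unbalanced-without-loops, and with-loops respectively, then the exponential formula gives
\[
S(t_+,t_-,t_0,x,y,z) = \exp\bigl(t_+ CS_+ + t_- CS_- + t_0 CS_0\bigr).
\]
So it suffices to identify the three connected generating functions, or equivalently the three ``total'' generating functions obtained by setting the corresponding $t$ to $1$ and the others to $0$.

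Next I would compute the relevant \emph{all-signed-graph} counts. A signed graph on $v$ labelled vertices with a prescribed underlying simple graph is obtained by choosing, for each edge of the underlying graph, whether it is positive, negative, or doubled (both a $+$ and a $-$ edge) — that is $1+y$ contributions per potential edge if we only allowed one edge, but here each of the $\binom v2$ vertex pairs independently carries a $+$ edge or not and a $-$ edge or not, giving a factor $(1+y)^2$ per pair; and each of the $v$ vertices independently carries a loop or not, giving $(1+x)$ per vertex. Hence the generating function for \emph{all} signed graphs (no connectivity or balance constraint) is
\[
\sum_v (1+x)^v (1+y)^{2\binom v2}\frac{z^v}{v!} = F\bigl((1+x)z,\,(1+y)^2\bigr).
\]
Separately, the generating function for all \emph{balanced} signed graphs is more delicate: a balanced signed graph is, up to switching (vertex sign changes), an ordinary graph, and the number of switching classes together with the count of switchings per class needs to be handled — the cleanest route is to observe that on a connected graph with $v$ vertices there are exactly $2^{v-1}$ switchings and $2^{\#\text{edges} - v + 1}$ of them fixing any given balanced signature, leading (after the compositional bookkeeping) to the factor $F(2z,1+y)^{1/2}$; I would verify this against Theorem~\ref{thm:masterA} by noting $G(1,y,z) = F(z,1+y)$ and that balanced signed graphs on a fixed underlying graph correspond to $2^{c}$ signatures up to switching where $c$ is the number of independent cycles. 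Matching these counts with the $t$-substitutions in the exponential formula then pins down $CS_+, CS_-, CS_0$ and hence the claimed product formula.

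The main obstacle I anticipate is the balanced case and, relatedly, correctly splitting the ``unbalanced without loops'' versus ``with loops'' contributions. The factor $F(2z,1+y)^{1/2}$ for balanced components is exactly the subtle point: it encodes that a connected balanced signed graph is an ordinary connected graph equipped with a switching class, and the exponent $\tfrac12(t_+-t_-)$ in the final formula reflects a cancellation between balanced and unbalanced-loopless components sharing the same underlying graph structure. I would derive this by computing the generating function for \emph{all} signed graphs with no loops (set $x=0$): this equals $F(z,(1+y)^2)$, and it must factor as $\exp(CS_+ + CS_-)$; combined with the independently-computed balanced total $\exp(CS_+)$, one solves for both. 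Getting the loops right is then easy: loops only affect the ``$c_0$'' components and only through the vertex factor $(1+x)$ versus $1$, so $CS_0$ and $CS_- $ differ exactly by replacing one $F(z,(1+y)^2)$-type building block with $F((1+x)z,(1+y)^2)$, which is precisely the shape of the last two factors in the theorem. Once the three connected generating functions are known, substituting back into $S = \exp(t_+CS_+ + t_-CS_- + t_0CS_0)$ and collecting exponents yields the stated product; this final algebra is routine.
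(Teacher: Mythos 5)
Your overall strategy matches the paper's exactly: decompose via the exponential formula $S = \exp(t_+CS_+ + t_-CS_- + t_0CS_0)$, compute three ``total'' specializations --- $S(1,1,1,x,y,z)$, the loopless total $R(1,1,y,z)$, and a balanced total --- by direct counting, and back out $CS_+, CS_-, CS_0$ by taking logarithms. Your two easy counts, $S(1,1,1,x,y,z) = F((1+x)z,(1+y)^2)$ and $R(1,1,y,z) = F(z,(1+y)^2)$, are precisely the paper's, and your final reassembly step is the paper's as well.

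The balanced count, which you rightly flag as the subtle step, is however stated incorrectly. You claim that on a connected graph with $v$ vertices there are $2^{v-1}$ switchings, that $2^{e-v+1}$ of them fix any balanced signature, and also that balanced signed graphs on a fixed underlying graph correspond to $2^c$ signatures up to switching with $c$ the cycle rank. These three claims are jumbled. There are $2^v$ switchings (vertex sign assignments); a switching fixes a signature iff it is constant on each component, so exactly $2$ switchings fix any given signature when the graph is connected; hence the orbit of the all-positive signature --- the set of balanced signatures on a connected graph --- has size $2^v/2 = 2^{v-1}$, and they all lie in a \emph{single} switching class. (Your $2^{e-v+1}$ is the number of switching classes of signatures, not a stabilizer.) The paper packages the correct count via the Kabell--Harary correspondence with \emph{marked graphs} (graphs with vertex signs): each connected balanced signed graph arises from exactly $2$ marked graphs, so the balanced generating function satisfies $B(2t_+,y,z) = M(t_+,y,z)$, and since $M(1,y,z) = \sum_v 2^v (1+y)^{v(v-1)/2} z^v/v! = F(2z,1+y)$ one gets $B(2,y,z)=F(2z,1+y)$ and $CS_+ = \frac12\log F(2z,1+y)$. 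You correctly anticipate this target $F(2z,1+y)^{1/2}$, but the counting argument you sketch for it would not produce it as written.
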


For convenience we record below, for each parameter, the letter we use to count it and the formal variable we use to keep track of it in the generating function.

\medskip

\begin{tabular}{|c|c|l|}
\hline
number & variable & parameter \\
\hline
$c_+$ & $t_+$ & balanced components (with no loops) \\
$c_-$ & $t_-$ & unbalanced components with no loops \\
$c_0$ & $t_0$ & (necessarily unbalanced) components with loops \\
$l$ & $x$ & loops\\ 
$e$ & $y$ & edges\\
$v$ & $z$ & vertices\\
\hline
\end{tabular} 
\medskip

In order to compute the master generating function, we will need to count various auxiliary subfamilies of signed graphs, by computing their generating functions. We record them in the following table:

\begin{definition}\label{def:table}
For each type of graph in the right column, let the symbol on the left column denote the number of graphs with the appropriate parameters, and let the entry of the middle column denote the corresponding generating function.
\medskip

\noindent
\begin{tabular}{|c|c|l|}
\hline
number & gen. fn. & type of graph \\
\hline
$s(c_+,c_-,c_0,l,e,v)$ & $S(t_+,t_-,t_0,x,y,z)$ & signed graphs  \\
$r(c_+,c_-,e,v)$ & $R(t_+,t_-,y,z)$ & signed graphs with no loops \\
$b(c_+,e,v)$ & $B(t_+,y,z)$ & balanced signed graphs \\
$s_+(e,v)$ & $CS_+(y,z)$ &  connected balanced signed graphs \\
$s_-(e,v)$ & $CS_-(y,z)$ &  conn. unbal. signed graphs with no loops\\
$s_0(l,e,v)$ & $ CS_0(x,y,z)$ & conn. unbal. signed graphs with loops\\
\hline
\end{tabular}
\medskip

For example, $b(c_+,e,v)$ is the number of balanced signed graphs with $c_+$ (necessarily balanced) components, $e$ edges (which are necessarily non-loops), and $v$ vertices, and
\[
B(t_+,y,z) = \sum b(c_+,e,v) t_+^{c_+} y^e \frac{z^v}{v!}.
\]
\end{definition}
\noindent When it causes no confusion, we will omit the names of the variables in a generating function, for instance, writing $B$ for $B(t_+,y,z)$.

\begin{proof}[Proof of Theorem \ref{thm:master}]
The compositional formula \cite[Theorem 5.1.4]{EC2} gives the following three equations:
\begin{eqnarray*}
B &=&e^{t_+CS_+} \\
R &=& e^{t_+CS_+ + t_-CS_-} \\
S &=& e^{t_+CS_+ + t_-CS_- + t_0CS_0}
\end{eqnarray*}
so we will know $B,R,$ and $S$ if we can compute $CS_+, CS_-$, and $CS_0$. In turn, the above equations give
\begin{eqnarray*}
CS_+(y,z) &=& \frac12 \log  B(2,y,z) \\
CS_+(y,z)+CS_-(y,z) &=& \log R(1,1,y,z) \\
CS_+(y,z)+CS_-(y,z)+CS_0(x,y,z) &=& \log S(1,1,1,x,y,z)
\end{eqnarray*}
and we will see that the right hand sides of these three equations are not difficult to compute. We proceed in reverse order.

First, note that there are ${v \choose l}{v(v-1) \choose e}$ signed graphs on $v$ vertices having $l$ loops and $e$ edges, so we have
\begin{eqnarray*}
S(1,1,1,x,y,z) &=&  \sum_{l,e,v} {v \choose l} {v(v-1) \choose e} x^l y^e \frac{z^v}{v!} \\
&=& \sum_v \left(1+x\right)^v \left(1+y\right)^{2{v \choose 2}} \frac{z^v}{v!} \\
&=& F((1+x)z, (1+y)^2). 
\end{eqnarray*}

Next, observe that there are ${v(v-1) \choose e}$ signed graphs on $v$ vertices having $e$ edges and {\bf no} loops, so we have
\begin{eqnarray*}
R(1,1,y,z) &=&  \sum_{e,v} {v(v-1) \choose e} y^e \frac{z^v}{v!} \\
&=& \sum_v \left(1+y\right)^{2{v \choose 2}} \frac{z^v}{v!} \\
&=& F(z, (1+y)^2) 
\end{eqnarray*}

Finally, counting balanced signed graphs is more subtle. We extend slightly a computation by Kabell and Harary \cite[Correspondence Theorem]{KH}, which relates them to marked graphs. A \emph{marked graph} is a simple undirected graph, together with an assignment of a sign $+$ or $-$ to each \textbf{vertex}. We will enumerate them according to the number of components, edges, and vertices, adding the following entry to the table of Definition \ref{def:table}:

\medskip

\noindent
\begin{tabular}{|c|c|l|}
\hline
number & gen. fn. & type of graph \\
\hline
$\quad\quad m(c_+,e,v) \quad\quad $ & $\quad\quad M(t_+,y,z)\quad\quad$ & marked graphs \quad\quad\quad\quad\quad\quad\quad\quad\\
\hline
\end{tabular}
\medskip

 From each marked graph $G$, we can obtain a signed graph by assigning to each edge of $G$ the product of the signs on its vertices; the resulting graph $G'$ is clearly balanced.

\begin{figure}[h]
\begin{center}
\includegraphics[height=2in]{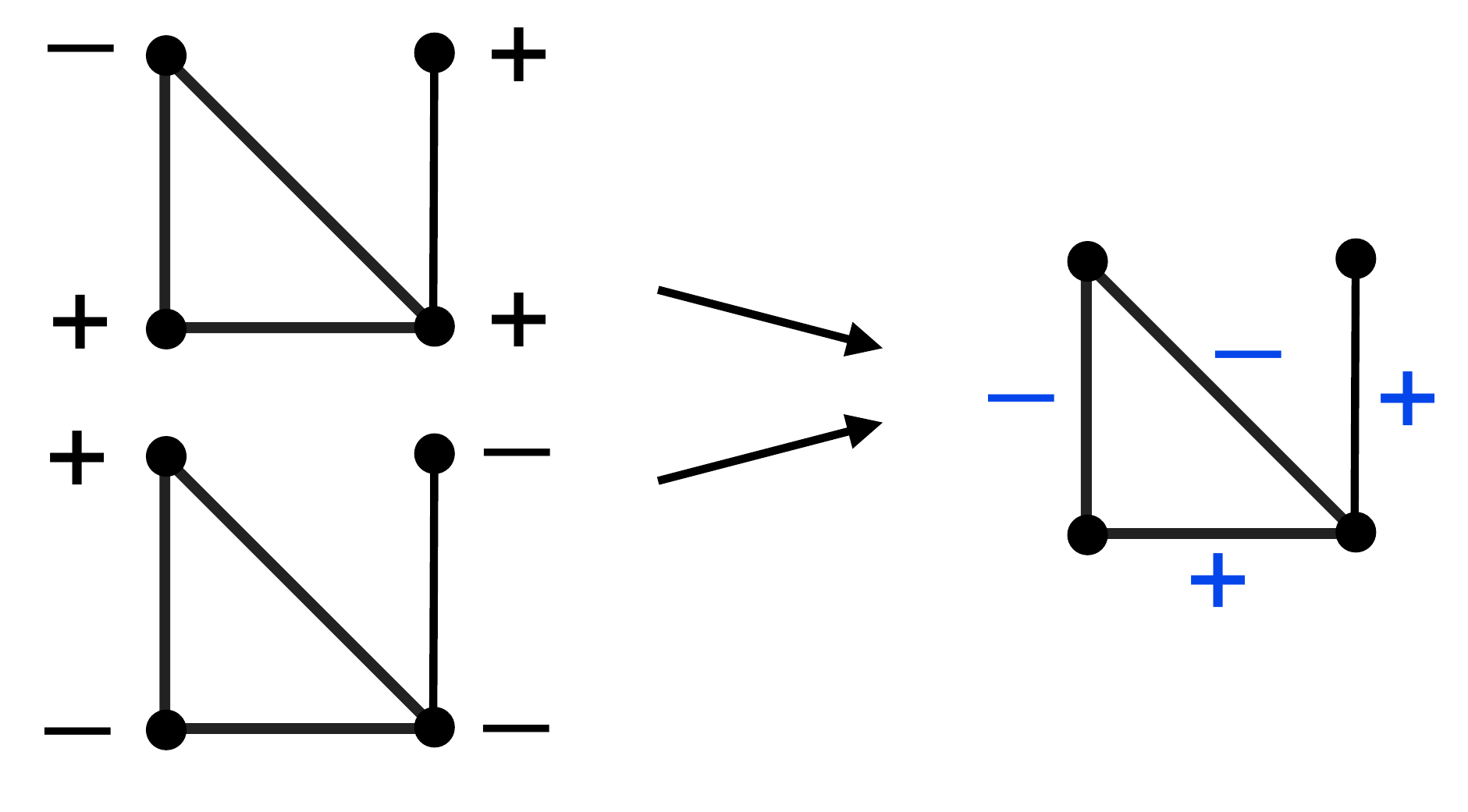} 
\end{center}
\caption{The two marked graphs that give rise to one balanced signed graph.}
\end{figure}

It is not difficult to check that every balanced signed graph $G'$ arises in this way from a graph $G$. Furthermore, if $G'$ has $c_+$ components, then it arises from exactly $2^{c_+}$ marked graphs, obtained from $G$ by choosing some of its components and switching the signs of their vertices. Therefore 
\[
m(c_+,e,v) = 2^{c_+}b(c_+,e,v) \qquad \textrm{and} \qquad M(t_+,y,z) = B(2t_+,y,z).
\]
Now, counting marked graphs is much easier: there are ${v(v-1)/2 \choose e}2^v$ marked graphs on $v$ vertices having $e$ edges. Hence
\[
M(1,y,z) =  \sum_{e, v} {{v(v-1)/2} \choose e} 2^v y^e \frac{z^v}{v!} = \sum_{v \geq 0} \left(1+y\right)^{v \choose 2} \frac{(2z)^v}{v!} = F(2z, 1+y) 
\]
so
\[
B(2,y,z) = F(2z,1+y).
\]

It follows that
\begin{eqnarray*}
CS_+(y,z) &=&  \frac12 \log F(2z, 1+y) \\
CS_-(y,z) &=& \log F(z, (1+y)^2) -  \frac12 \log F(2z, 1+y) \\
CS_0(x,y,z) &=& \log F((1+x)z, (1+y)^2) -  \log F(z, (1+y)^2) 
\end{eqnarray*}
and from this we conclude that
\begin{eqnarray}
B(t_+,y,z) &=& F(2z, 1+y)^{\frac12 t_+} \label{eq:B}\\
R(t_+,t_-,y,z) &=& F(2z, 1+y)^{\frac12(t_+ - t_-)} F(z, (1+y)^2)^{t_-} \notag \\
S(t_+,t_-,t_0,x,y,z) &=& F(2z, 1+y)^{\frac12(t_+ - t_-)} F(z, (1+y)^2)^{t_--t_o} F((1+x)z, (1+y)^2)^{t_0} \notag
\end{eqnarray}
as desired.
\end{proof}

\subsection{\textsf{From classical root systems to signed graphs}}\label{subsec:Tutte}

The theory of signed graphs, developed extensively by Zaslavsky in a series of papers \cite{Z1, Z2, Z3, Z4}, is a very convenient combinatorial model for the classical root systems. To each simple (unsigned) graph $G$ on vertex set $[v]$ we can associate the vector configuration:
\[
A_G = \{e_i - e_j \, : \, ij \textrm{ is an edge of } G, \,\, i<j\} \subseteq A_{v-1}. 
\]
To each signed graph $G$ on $[v]$ we associate the vector configurations\footnote{Zaslavsky follows a different convention, where signed graphs can have \emph{half-edges} and \emph{loops} at a vertex $i$, corresponding to the vectors $2e_i$ and $e_i$ respectively. Since our vector arrangements will never contain $e_i$ and $2e_i$ simultaneously, it will simplify our presentation to consider loops only.}:
\begin{eqnarray*}
B_G &=& 
\{e_i - e_j \, : \, ij \textrm{ is a positive edge of } G, \,\, i<j\} \\
&\cup &
\{e_i + e_j \, : \, ij \textrm{ is a negative edge of } G\} \\
&\cup & 
\{e_i \, : \, i \textrm{ is a loop of } v\}  \subseteq B_v, 
\end{eqnarray*}
\begin{eqnarray*}
C_G &=& 
\{e_i - e_j \, : \, ij \textrm{ is a positive edge of } G, \,\, i<j\} \\
&\cup &
\{e_i + e_j \, : \, ij \textrm{ is a negative edge of } G\} \\
&\cup & 
\{2e_i \, : \, i \textrm{ is a loop of } v\}  \subseteq C_v,
\end{eqnarray*}
and, if $G$ has no loops,
\begin{eqnarray*}
D_G &=& 
\{e_i - e_j \, : \, ij \textrm{ is a positive edge of } G, \,\, i<j\} \\
&\cup &
\{e_i + e_j \, : \, ij \textrm{ is a negative edge of } G\}  \subseteq D_v.
\end{eqnarray*}

These are the subsets of the arrangements $A_v, B_v, C_v$, and $D_v$ (when the signed graph has no loops). Our plan is now to compute the arithmetic Tutte polynomials of these arrangements ``by brute force" directly from the definition:
\[
M_{A}(x,y)= \sum_{B \subseteq {A}} m(B)(x-1)^{r(A)-r(B)}(y-1)^{|B|-r(B)}.
\]
Carrying out such a computation, which is exponential in size, is hopeless for a general arrangement $A$. The structure of these arrangements if very special, however. Here we can use graphs and signed graphs to carry out all the necessary bookkeeping, and their combinatorial properties to compute the desired formulas.

The first step is to the ranks and multiplicities of these vector arrangements in the various lattices. To compute $m(A)=m_\ZZ(A)$ in $\ZZ^V$,  it will be helpful to regard the vectors in $A$ as the columns of a $v \times |A|$ matrix, and recall \cite{MociTutte} that
\[
m(A) =m_\ZZ(A) = \gcd \{ |\det B| \, : \, B \textrm{ full rank submatrix of } A\}.
\]
To compute $m_R(A)$ and $m_W(A)$ will require a bit more care.

Note that when $A$ is $A_G, B_G, C_G$ or $D_G$, the resulting matrix is the adjacency matrix of $G$. Also, dependent subsets of $A_G$ correspond to sets of edges of $G$ containing a cycle, while dependent subsets of $B_G, C_G,$ and $D_G$ correspond to sets of edges containing a balanced cycle. 

\begin{lemma}\label{lemma:A}
$(\mathbf{A_{v-1}})$: 
For any graph $G$ with $v(G)$ vertices and $c(G)$ components,
\[
r(A_G) = v(G) - c(G), \quad m(A_G) = 1
\]
\end{lemma}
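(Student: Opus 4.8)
The plan is to establish the two claims separately, since the rank statement is a classical fact about graphic matroids while the multiplicity statement encodes the unimodularity of the type $A$ root system.

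\textbf{Rank.} First I would recall that $A_G$ is, by definition, the set of vectors $e_i - e_j$ for edges $ij$ of $G$. A subset of these vectors is linearly independent precisely when the corresponding subgraph is a forest: any cycle $i_1 i_2 \cdots i_k i_1$ gives the dependence $(e_{i_1} - e_{i_2}) + (e_{i_2} - e_{i_3}) + \cdots + (e_{i_k} - e_{i_1}) = 0$, and conversely a minimal dependence among the $e_i - e_j$ must use each vertex an even number of times, forcing a cycle. Hence the matroid of $A_G$ is the cycle matroid of $G$, whose rank is well known to be $v(G) - c(G)$: a spanning forest of a graph with $v(G)$ vertices and $c(G)$ components has exactly $v(G) - c(G)$ edges. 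I would state this with a one-line justification rather than a full induction.

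\textbf{Multiplicity.} For $m(A_G) = 1$ I would use the criterion quoted just before the lemma, namely $m(A_G) = \gcd\{|\det B| : B \text{ a full-rank submatrix of } A_G\}$. It therefore suffices to show every square submatrix of the incidence-type matrix whose columns are the $e_i - e_j$ has determinant in $\{0, \pm 1\}$; equivalently, that this matrix is totally unimodular. This is the standard total unimodularity of the (oriented) incidence matrix of a graph, and I would prove it by the usual induction on the size of the submatrix: a square submatrix either has a column with no nonzero entries (determinant $0$), or a column with a single nonzero entry $\pm 1$ (expand along it and induct), or every column has exactly two nonzero entries $+1$ and $-1$, in which case the rows sum to zero and the determinant is $0$. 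Since $\ZZ A_G$ is exactly $\span(A_G) \cap \ZZ^v$ when all maximal minors are $\pm 1$, we get $m(A_G) = 1$.

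\textbf{Main obstacle.} There is no serious obstacle here; both halves are textbook facts about graphic matroids and incidence matrices. The only point requiring a little care is making the reduction from ``$m(A_G)$ is the gcd of maximal minors'' to ``total unimodularity'' cleanly, and noting that $A_G$ lies in the rank-$(v-1)$ sublattice cut out by $\sum x_i = 0$ rather than all of $\ZZ^v$ — but since we only ever need the index of $\ZZ A_G$ inside $\span(A_G) \cap \ZZ^v$, the total unimodularity argument applies verbatim. I expect this proof to be just a few lines, and it sets up the pattern (rank via cycles, multiplicity via minors of the incidence matrix) that the subsequent lemmas for $B_G, C_G, D_G$ will elaborate in the signed setting.
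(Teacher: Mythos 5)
Your proof is correct and follows the same approach as the paper, which simply cites the standard facts — that the matroid of $A_G$ is the cycle matroid of $G$ (hence rank $v(G)-c(G)$), and that the vertex-edge incidence matrix is totally unimodular (hence all maximal minors are $0, \pm 1$, giving $m(A_G)=1$) — rather than spelling out the inductions you give. Incidentally, the paper's proof misnames the incidence matrix as the ``adjacency matrix''; your terminology is the correct one.
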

\begin{proof}
The first statement is standard \cite{Oxley} and the second follows from the fact that the adjacency matrix of a graph is totally unimodular; \emph{i.e.}, all it subdeterminants equal $1, 0,$ or $-1$. \cite[Chapter 19]{Schrijver}.
\end{proof}

\begin{lemma}\label{lemma:B}
$(\mathbf{B_v})$: 
For any signed graph $G$ with $v(G)$ vertices and $c_-(G)$ unbalanced loopless components,
\[
r(B_G) = v(G) - c_+(G), \qquad m(B_G) = 2^{c_-(G)}
\]
\end{lemma}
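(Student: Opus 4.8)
The plan is to compute the rank and the multiplicity of $B_G \subseteq \ZZ^{v}$ separately, in each case reducing to the connected case and then using the combinatorics of balanced versus unbalanced components. For the rank, I would first observe that $r$ is additive over connected components, so it suffices to treat a connected signed graph $H$ on $w$ vertices. If $H$ is balanced, then by Zaslavsky's switching lemma we may switch signs at vertices to make every edge positive; the resulting configuration is (a subset of) $\{e_i - e_j\}$ which spans the rank-$(w-1)$ root lattice of $A_{w-1}$, and switching is a lattice automorphism, so $r(B_H) = w-1$. If $H$ is unbalanced, it contains an unbalanced cycle (or a loop); after switching we may assume it contains either a loop $e_i$ or a positive-plus-negative pair on a single edge, i.e. both $e_i - e_j$ and $e_i + e_j$, which together with the spanning tree edges give $e_i$ and $e_j$ and hence the full rank $w$. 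Summing over components: the balanced components each contribute one to the corank, giving $r(B_G) = v(G) - c_+(G)$, exactly as claimed (note $c_+$ counts only balanced loopless components, which is correct since loopful components are unbalanced and full rank).

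For the multiplicity, I would use the formula $m(B_G) = \gcd\{|\det B| : B \text{ full rank submatrix}\}$ recalled just before the lemma. Again this is multiplicative over connected components, so it suffices to show a connected signed graph $H$ has multiplicity $1$ if balanced and $2$ if unbalanced. For a balanced connected $H$, switching to all-positive edges shows $B_H$ is unimodular (it sits inside the totally unimodular incidence matrix of $A_{w-1}$), so $m(B_H)=1$. For an unbalanced connected $H$ on $w$ vertices, a full-rank submatrix is a set of $w$ columns (edges) that spans; such a column set is, up to switching, a spanning tree together with one extra edge closing an unbalanced cycle. A direct determinant computation (or the standard fact about incidence matrices of signed graphs, cf. Zaslavsky) shows every such maximal minor is $\pm 2$ or $0$, and at least one is $\pm 2$; hence the gcd over all full-rank submatrices is $2$. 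Thus $m(B_H) = 2$ for each unbalanced connected component. Multiplying over the $c_-(G)$ unbalanced loopless components and the loopful components (which are also unbalanced and contribute a factor of $2$ each from the loop column alone)\,---\,wait, here one must be careful: the statement asserts $m(B_G) = 2^{c_-(G)}$, with $c_-$ counting only loopless unbalanced components, so I would double-check that a component containing a loop also contributes exactly one factor of $2$; indeed the loop gives the column $e_i$, which is unimodular by itself, but the component as a whole is unbalanced and so its multiplicity is still $2$ by the same argument, meaning the exponent should really count all unbalanced components. This is the point to reconcile with the paper's bookkeeping, presumably because $m$ here is $m_\ZZ$ and the loopful-component case is handled to match $c_0$ separately, or the statement tacitly folds $c_0$ into the count; I would state the per-component value cleanly and then assemble the exponent to match whichever convention the surrounding text fixes.

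The main obstacle I anticipate is precisely the multiplicity computation for unbalanced components: establishing that \emph{every} maximal minor of the signed incidence matrix of an unbalanced connected graph is $0$ or $\pm 2$ (never $\pm 1$ and never larger), so that the gcd is exactly $2$. The cleanest route is the switching reduction combined with the classical description of which edge subsets form bases (spanning trees plus one unbalanced-cycle edge, or forests plus one loop per component), followed by an explicit Laplace expansion along the loop row or the doubled edge; the factor of $2$ then drops out transparently. Everything else\,---\,additivity over components, the switching lemma, total unimodularity in the balanced case\,---\,is routine, so the write-up should foreground the unbalanced-minor claim and handle the rest briskly.
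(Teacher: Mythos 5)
Your rank argument and your treatment of balanced and loopless-unbalanced components are fine and essentially match the paper's block decomposition and pruning argument. The genuine gap is in the loopful case, and it is not the kind of bookkeeping discrepancy you suspected: the statement $m(B_G)=2^{c_-(G)}$ is correct as written, and the exponent should \emph{not} count loopful components. A connected component $H$ on $w$ vertices that contains a loop at vertex $i$ has multiplicity $m(H)=1$, not $2$. To see this, recall that in the type-$B$ model a loop at $i$ contributes the column $e_i$ (not $2e_i$). Take as basis a spanning tree $T$ of $H$ together with that loop. Expanding the $w\times w$ determinant along the loop column $e_i$ reduces it to the $(w-1)\times(w-1)$ minor of $T$ with row $i$ deleted, and the pruning argument then gives $\pm 1$. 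So this particular basis has determinant $\pm 1$, and $m(H)=\gcd\{1,\dots\}=1$, regardless of whether other bases (tree plus an edge closing a longer unbalanced cycle) have determinant $\pm 2$.

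The step at which your reasoning goes wrong is the sentence ``the component as a whole is unbalanced and so its multiplicity is still $2$ by the same argument.'' That argument for loopless unbalanced components worked because \emph{every} basis there is a spanning tree plus an edge closing an unbalanced cycle of length $\geq 2$, which forces every basis determinant to be $\pm 2$. Once a loop is present, the cycle can be the loop itself, and that basis has determinant $\pm 1$, breaking the uniformity. This is precisely the distinction that separates Lemmas \ref{lemma:B} and \ref{lemma:C}: in type $C$ the loop contributes the column $2e_i$, the analogous basis has determinant $\pm 2$, and loopful components genuinely contribute a factor of $2$, giving $m(C_G)=2^{c_0(G)+c_-(G)}$. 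Your proposal appears to be importing the type-$C$ behavior of loops into type $B$. Correcting this single point (loopful components contribute $1$ in type $B$) makes the rest of your write-up sound.
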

\begin{proof}
The first statement is well-known \cite{Z2} and not difficult to prove. For the second one, notice that the matrix $B_G$ can be split into blocks $B_{G_1}, \ldots, B_{G_c}$ where $G_1, \ldots, G_c$ are the connected components of $G$, and all entries outside of these blocks are $0$. Therefore $m(G) = m(G_1) \cdots m(G_c)$. We claim that $m(G_i)$ is $2$ if $G_i$ is unbalanced loopless and $1$ otherwise.

If $G_i$ is balanced, then $B_{G_i}$ has rank $v(G_i)-1$ and the bases of $G_i$ are given by the spanning trees $T$ of $G_i$. Pruning a leaf $v$ of $T$ does not change $|\det A_T|$, as can be seen by expanding by minors in row $v$. Pruning all leaves one at a time, we are left with a single vertex, which has determinant $1$.

If $G_i$ is unbalanced, each basis of $B_{G_i}$ is given by a connected subgraph with a unique unbalanced cycle $C$; and by pruning leaves, the determinant of that basis equals $|\det A_C|$. 

If $G_i$ has a loop, then we can choose $C$ to be that loop, and $|\det A_C|=1$. Therefore 
$m(G_i) = \gcd\{1, \ldots\} = 1$. 

On the other hand, if $G_i$ is loopless, we claim that $|\det A_C| = 2$ for all unbalanced cycles $C$ of $G_i$. To see this, we check that $|\det A_C|$ does not change when we replace two consecutive edges $uv$ and $vw$ by an edge $uw$, whose sign is the product of their signs. At the matrix level, this is an elementary column operation on columns $uv$ and $vw$, followed by an expansion by minors in row $v$. We can do this subsequently until we are left with two edges, which are necessarily of the form $e_i+e_j$ and $e_i-e_j$, so they have determinant $2$. It follows that $m(G_i)=2.$
\end{proof}

\begin{lemma}\label{lemma:C}
$(\mathbf{C_v})$: 
For any signed graph $G$ with $v(G)$ vertices, $c_-(G)$ unbalanced loopless components, and $c_0(G)$ components with loops, 
\[
r(C_G) = v(G) - c_+(G), \qquad m(C_G) = 2^{c_0(G) + c_-(G)}
\]
\end{lemma}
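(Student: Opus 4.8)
The plan is to mimic the proof of Lemma \ref{lemma:B} essentially verbatim, exploiting that $C_G$ differs from $B_G$ only in that a loop at a vertex $i$ contributes the vector $2e_i$ instead of $e_i$. For the rank statement, rescaling the vector at a loop from $e_i$ to $2e_i$ changes no span, so $\span(C_G)$ and the rank of every edge subset coincide with those of $B_G$; hence $r(C_G) = v(G) - c_+(G)$ follows from Lemma \ref{lemma:B}. For the multiplicity, the matrix $C_G$ is block diagonal with blocks $C_{G_1}, \ldots, C_{G_c}$ indexed by the connected components $G_1, \ldots, G_c$ of $G$, so $m(C_G) = \prod_i m(C_{G_i})$, and it suffices to show that $m(C_{G_i})$ equals $1$ if $G_i$ is balanced and $2$ if $G_i$ is unbalanced (with or without loops).

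The balanced case and the unbalanced loopless case require nothing new: a balanced component has no loops, and in a loopless component $C_{G_i} = B_{G_i}$, so these two cases are exactly the ones handled in Lemma \ref{lemma:B}, yielding $m(C_{G_i}) = 1$ and $m(C_{G_i}) = 2$ respectively. The only genuinely new case is an unbalanced component $G_i$ carrying at least one loop. Here $C_{G_i}$ has full rank $v(G_i)$, and I would first record the structure of a basis: a basis is a connected spanning subgraph with $v(G_i)$ edges, so either it uses one loop (at a vertex $w$) together with a spanning tree of the non-loop edges, or it uses no loop and is a connected spanning subgraph with exactly one cycle $C$, which must be unbalanced. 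In the first case, pruning leaves one at a time (each step leaving $|\det|$ unchanged, as in Lemma \ref{lemma:B}) reduces the basis matrix to the $1\times 1$ matrix $(2)$ coming from $2e_w$, so $|\det| = 2$. In the second case, the same leaf-pruning reduces the basis to $A_C$, which by the edge-contraction argument in the proof of Lemma \ref{lemma:B} has $|\det| = 2$. Thus every basis of $C_{G_i}$ has determinant of absolute value $2$, so $m(C_{G_i}) = \gcd\{2,2,\ldots\} = 2$. Combining the three cases gives $m(C_G) = 1^{c_+(G)}\,2^{c_-(G)}\,2^{c_0(G)} = 2^{c_0(G)+c_-(G)}$, as claimed.

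The main obstacle is the basis classification in the loop-carrying case: one must argue that a maximal independent subset of the signed-graphic vectors is always a connected spanning subgraph whose only dependence-breaking feature is either a single loop or a single unbalanced non-loop cycle (in particular, a basis can contain at most one loop, since two loops would force the non-loop part to span fewer than $v(G_i)-1$ edges and thus disconnect the subgraph). This is where the matroid structure of signed graphs, as developed by Zaslavsky \cite{Z1, Z2}, is invoked; once it is in hand, everything else is the bookkeeping already performed for Lemma \ref{lemma:B}.
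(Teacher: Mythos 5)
Your proposal matches the paper's proof, which is the one-line observation that the argument of Lemma~\ref{lemma:B} applies verbatim once loops contribute determinant $2$ instead of $1$; your write-up simply unpacks that reduction and is in substance identical. One small caution: a basis of the frame matroid need not be connected and can in fact use more than one loop (for instance $\{2e_1, 2e_2\}$ in the signed graph on two vertices with loops at both and an edge joining them), so the assertion that ``every basis has determinant $\pm 2$'' — and the supporting claim that a basis contains at most one loop — is too strong, though the paper's own proof of Lemma~\ref{lemma:B} is informal in the same way. The conclusion survives regardless: a connected basis through a single loop has determinant $\pm 2$, and every column of $C_G$ has even coordinate sum so every full-rank minor is even, which pins the $\gcd$ at exactly $2$.
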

\begin{proof}
The argument used in Lemma \ref{lemma:B} also applies here, but now loops have determinant $2$.
\end{proof}

\begin{lemma}\label{lemma:D}
$(\mathbf{D_v})$: 
For any loopless signed graph $G$ with $v(G)$ vertices and $c_-(G)$ unbalanced loopless components, 
\[
r(C_G) = v(G) - c_+(G), \qquad m(C_G) = 2^{c_-(G)}
\]
\end{lemma}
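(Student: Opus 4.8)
The plan is to derive Lemma \ref{lemma:D} as an immediate specialization of Lemma \ref{lemma:B}, since a loopless signed graph in type $D$ uses exactly the same vectors $e_i - e_j$ and $e_i + e_j$ as in type $B$, with the only difference being that the ambient root system $B_v$ also contains the vectors $e_i$. The key point is that no such vector $e_i$ can appear in $D_G$, so every subset $B \subseteq D_G$ is also a subset of $B_G$, and both the rank and the multiplicity of $B$ are computed from the same matrix. Thus the rank formula $r(D_G) = v(G) - c_+(G)$ is identical to the one in Lemma \ref{lemma:B}, and it remains only to recompute the multiplicity.

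For the multiplicity, I would follow the block decomposition argument of Lemma \ref{lemma:B} verbatim: split the matrix $D_G$ into blocks $D_{G_1}, \ldots, D_{G_c}$ according to the connected components, so $m(D_G) = m(D_{G_1}) \cdots m(D_{G_c})$. For a balanced component $G_i$, the bases are spanning trees and the leaf-pruning argument shows $m(D_{G_i}) = 1$, exactly as before. For an unbalanced component $G_i$ — which is necessarily loopless, since $G$ has no loops — each basis is a connected subgraph with a unique unbalanced cycle $C$, and after pruning leaves we are reduced to computing $|\det A_C|$. The contraction-of-consecutive-edges argument (an elementary column operation followed by expansion by minors) reduces $C$ to a pair of edges $e_i + e_j$ and $e_i - e_j$ with determinant $2$, so $m(D_{G_i}) = 2$. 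Multiplying over components yields $m(D_G) = 2^{c_-(G)}$.

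I do not expect any genuine obstacle here: the only subtlety, compared to Lemma \ref{lemma:C}, is that the ``loop contributes determinant $2$'' case simply does not occur, so the contribution to the exponent is $c_-(G)$ rather than $c_0(G) + c_-(G)$. The cleanest write-up is therefore a two-sentence proof that invokes Lemma \ref{lemma:B} (or its proof) and notes that the loopless hypothesis removes the $c_0$ term; one should also flag the (apparently typographical) point that the statement names the objects $r(C_G)$ and $m(C_G)$ where $r(D_G)$ and $m(D_G)$ are meant.
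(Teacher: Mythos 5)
Your proposal is correct and follows the paper exactly: the paper's proof is the one-line observation that this is a special case of Lemma \ref{lemma:B}, since for loopless $G$ the vector configuration $D_G$ coincides with $B_G$. Your additional replay of the block-decomposition and cycle-contraction argument is accurate but redundant given that Lemma \ref{lemma:B} already covers it; the note about the $C_G$/$D_G$ typo in the statement is a fair observation.
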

\begin{proof}
This is a special case of Lemma \ref{lemma:B}.
\end{proof}


To compute the multiplicity functions $m_R$ and $m_W$ with respect to the root and weight lattices requires a bit more care:

\begin{lemma}\label{lemma:A'}
$(\mathbf{A_{v-1}})$: 
For any graph $G$ with $v(G)$ vertices and $c(G)$ components having $v_1, \ldots, v_{c(G)}$ vertices respectively,
\[
m_R(A_G) = 1
\qquad m_W(A_G) = \gcd(v_1, \ldots, v_{c(G)}).
\]
\end{lemma}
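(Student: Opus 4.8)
\textbf{Proof proposal for Lemma~\ref{lemma:A'}.}
The plan is to compute each multiplicity componentwise and reduce everything to the one-component case. Since the matrix $A_G$ is block-diagonal with one block for each connected component $G_i$ of $G$ (entries outside the blocks being zero, because components share no vertices), the sublattice $\ZZ A_G$ decomposes compatibly, and hence $m_\Lambda(A_G) = \prod_i m_\Lambda(A_{G_i})$ for any of the relevant lattices $\Lambda$. So it suffices to treat a connected graph $G$ on $v$ vertices; I must show $m_R(A_G)=1$ and $m_W(A_G)=v$.

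For the root lattice, recall $\Lambda_R(A_{v-1})$ is generated by the $e_i-e_j$ themselves, and $\span A_G \cap \Lambda_R$ sits inside $\span A_{v-1} \cap \Lambda_R = \Lambda_R(A_{v-1})$. Since $G$ is connected, $A_G$ contains a spanning tree's worth of vectors $e_i-e_j$, and these already generate all of $\Lambda_R(A_{v-1}) = \span A_G \cap \Lambda_R$ (the standard fact that spanning-tree edge vectors form a $\ZZ$-basis of the type-$A$ root lattice, which also underlies the unimodularity in Lemma~\ref{lemma:A}). Hence $\ZZ A_G = \span A_G \cap \Lambda_R$ and $m_R(A_G)=1$.

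For the weight lattice, I would work in the quotient model $\Lambda_W(A_{v-1}) = \ZZ\{e_1,\dots,e_v\}/(\varSigma e_i = 0)$, or equivalently identify $\span A_{v-1}$ with the hyperplane $\{\varSigma a_i = 0\}$ and take $\Lambda_W$ to be the projection of $\ZZ^v$ onto it along $(1,\dots,1)$. The point is that $\span A_G$ (for $G$ connected on all $v$ vertices) is this whole hyperplane, so $\span A_G \cap \Lambda_W = \Lambda_W(A_{v-1})$, and we must compute the index $[\Lambda_W(A_{v-1}) : \ZZ A_G]$. Because $\ZZ A_G = \Lambda_R(A_{v-1})$ from the previous paragraph, this index is exactly $[\Lambda_W(A_{v-1}) : \Lambda_R(A_{v-1})] = \det A_{v-1} = v$ by the Proposition of \cite[Lemma 23.15]{FH} quoted earlier. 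Reassembling over components gives $m_W(A_G) = \prod_i v_i$... wait, this must be corrected: the components do \emph{not} span complementary subspaces of a single copy of $\Lambda_W(A_{v-1})$ unless one is careful. The right statement is that $\span A_G$ decomposes as a direct sum of the $A$-hyperplanes attached to each component, and the same reasoning in each block yields a factor $v_i$ from the balanced part; the claimed answer $\gcd(v_1,\dots,v_{c(G)})$ indicates that the multiplicity is \emph{not} multiplicative here, so I must instead compute $m_W(A_G)$ directly as $[\span A_G \cap \Lambda_W : \ZZ A_G]$ without splitting.

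So the main obstacle — and the step I would spend the most care on — is precisely this: when $G$ has several components, $\span A_G$ is a proper subspace of $\span A_{v-1}$, and $\span A_G \cap \Lambda_W$ can be strictly larger than $\oplus_i(\span A_{G_i}\cap \Lambda_W)$, since an integer combination $\varSigma a_i e_i$ that is nontrivial on each component can still be constant on each component (e.g. $a_i$ equal to $\pm 1$ alternately across components). Concretely, regarding $\Lambda_W$ as $\ZZ^v/(\1)$, the subspace $\span A_G$ is $\{x : \varSigma_{i \in V_k} x_i = 0 \text{ for each component } k\}/(\1 \text{ contribution})$; its intersection with $\Lambda_W$ consists of classes of vectors that are constant $c_k$ on block $k$ with $c_k v_k$ summing appropriately. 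Carrying out the Smith-normal-form / index computation on this configuration is where the $\gcd(v_1,\dots,v_{c(G)})$ emerges: $\ZZ A_G = \oplus_i \Lambda_R(A_{v_i - 1})$ sits inside $\span A_G \cap \Lambda_W$ with index equal to the order of the finite group $\big(\oplus_i \ZZ/v_i\big)\big/\langle(1,1,\dots,1)\rangle$, which is $v_1\cdots v_c / \mathrm{lcm}(v_1,\dots,v_c)$ — and one checks this equals $\gcd(v_1,\dots,v_c)$ only when $c \le 2$, so in fact the correct general statement must be re-examined against small cases before committing to the formula. I would verify the two-component case $A_{v-1}$ with $G = K_{v_1} \sqcup K_{v_2}$ by hand (where the answer should be $\gcd(v_1,v_2)$) to pin down the exact index-group description, then present the general argument via that finite quotient group.
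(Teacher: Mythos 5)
The $m_R$ part of your argument is fine (and essentially the paper's: it follows from Lemma~\ref{lemma:A} since $\Lambda_R = \{a\in\ZZ^v : \sum a_i = 0\}$ modulo $\1$, and spanning forests of the $G_i$ already give a $\ZZ$-basis). Your instinct to abandon componentwise multiplicativity for $m_W$ and compute the index directly is also right, since the quotient does mix the components. However, the quotient group you propose, $\bigl(\oplus_i \ZZ/v_i\bigr)/\langle(1,\dots,1)\rangle$, is wrong, and your resulting doubt about the stated $\gcd$ formula is misplaced: the $\gcd$ formula is correct, but the finite group controlling the index is not the one you wrote. (A quick sanity check: for three disjoint edges in $A_5$, i.e.\ $v_1=v_2=v_3=2$, your group has order $8/2 = 4$, but a direct computation shows the index is $2 = \gcd(2,2,2)$.)

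The error is in identifying which residues of component sums are achievable. Writing a class in $\Lambda_W$ by an integer representative $b\in\ZZ^v$, the condition $[b]\in \span A_G$ is \emph{not} that each $\sum_{u\in G_i} b_u$ is free to take any residue mod $v_i$. Rather, $[b]\in\span A_G$ iff $b - \mu\1 \in \span A_G$ for some $\mu\in\RR$, which forces $\sum_{u\in G_i} b_u = v_i\,\mu$ for \emph{one common scalar} $\mu$ across all components. Equivalently (and this is the paper's cleaner tack): choose the representative $a$ of the class that actually lies in $\span A_G$, so $\sum_{u\in G_i} a_u = 0$ for every $i$; the class lies in $\Lambda_W$ iff $a + \lambda\1 \in \ZZ^v$ for some $\lambda\in\RR$, and summing over $G_i$ gives $v_i\lambda\in\ZZ$ for every $i$, hence $d\lambda\in\ZZ$ with $d=\gcd(v_1,\dots,v_{c(G)})$. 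The assignment $a\mapsto \lambda \bmod \ZZ$ is a well-defined surjection $\span A_G\cap\Lambda_W \to \tfrac1d\ZZ/\ZZ \cong \ZZ/d$ with kernel exactly $\ZZ A_G$ (if $\lambda\in\ZZ$ then $a$ itself is an integer vector with zero component sums, hence in $\ZZ A_G$), so the index is $d$, not $\prod v_i/\mathrm{lcm}(v_i)$. So the missing idea is that there is a single scalar $\lambda$ (or $\mu$) governing all components simultaneously; once you see that, the $\gcd$ is forced.
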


\begin{proof}
The first statement is a consequence of Lemma \ref{lemma:A}. For the second one, let $d = \gcd(v_1, \ldots, v_{c(G)})$. We need to show that $[\span A_G \cap \Lambda_W : \ZZ A_G ] = d$.
Consider a vector $a \in \span A_G \cap \Lambda_W$. Recall that $\Lambda_W = \ZZ^n / \1$ where $\1 = (1, \ldots, 1)$, so $a +\lambda \1 \in \ZZ^n$ for some $\lambda \in \RR$. Now, since $a \in \span A_G$, we have $\sum_{v \in G_i} a_v=0$ for each connected component $G_i$ with $1 \leq i \leq c(G)$, which implies that $v_i \lambda = \sum_{v \in G_i}(a_v+\lambda) \in \ZZ$. It follows that $d \lambda \in \ZZ$, so $a \in \ZZ A_G + \frac{k}d \1$ for some $0 \leq k \leq d-1$. Conversely, we see that such an $a$ is in $\span A_G \cap \Lambda_W$.  
The desired result follows.
\end{proof}

\begin{lemma}\label{lemma:B'}
$(\mathbf{B_v})$: 
For any signed graph $G$ with $c_-(G)$ unbalanced loopless components,
\[
m_R(B_G) = 2^{c_-(G)}
\qquad
m_W(B_G) = 
\begin{cases}
2^{c_-(G)} & \mbox{if $G$ has odd balanced components} \\
2^{c_-(G)+1} & \mbox{otherwise}
\end{cases}
\]
\end{lemma}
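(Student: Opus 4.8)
The plan is to mimic the structure of the proof of Lemma \ref{lemma:A'}, reducing the computation of $m_R(B_G)$ and $m_W(B_G)$ to a per-component analysis, since both lattices split along connected components and so does the matrix $B_G$. For $m_R(B_G)$: recall $\Lambda_R(B_n) = \ZZ^v$, so $m_R(B_G) = m_\ZZ(B_G) = 2^{c_-(G)}$ is immediate from Lemma \ref{lemma:B}. The real content is the weight lattice. Here $\Lambda_W(B_v) = \ZZ\{e_1, \ldots, e_v, (e_1 + \cdots + e_v)/2\}$, i.e. the index-$2$ overlattice of $\ZZ^v$ obtained by adjoining the all-halves vector $h = \frac12(e_1 + \cdots + e_v)$. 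I would compute $m_W(B_G) = [\span B_G \cap \Lambda_W : \ZZ B_G]$ by comparing it with $m_\ZZ(B_G) = [\span B_G \cap \ZZ^v : \ZZ B_G] = 2^{c_-(G)}$ via the multiplicativity of indices: $m_W(B_G) = m_\ZZ(B_G) \cdot [\span B_G \cap \Lambda_W : \span B_G \cap \ZZ^v]$, so everything comes down to deciding whether $\span B_G$ contains a vector of the form $h + (\text{integer vector})$ that is not already in $\ZZ^v$ — equivalently, whether $h$ (mod $\ZZ^v$) lies in the image of $\span B_G$ in $\Lambda_W / \ZZ^v \cong \ZZ/2$.

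Next I would make this concrete using the component structure. By Lemma \ref{lemma:B}, a balanced component $G_i$ on vertex set $V_i$ contributes a full-rank span equal to $\{x : \sum_{v \in V_i} \pm x_v = 0 \text{ with signs determined by the switching class}\}$, while an unbalanced loopless (or looped) component $G_i$ contributes all of $\span\{e_v : v \in V_i\}$. So the only constraints cutting out $\span B_G$ inside $\RR^v$ come from the balanced components: one linear equation $\sum_{v \in V_i} \epsilon_v x_v = 0$ per balanced component. A vector congruent to $h$ modulo $\ZZ^v$ — i.e. a vector with all coordinates in $\frac12 + \ZZ$ — lies in $\span B_G$ iff for every balanced component $G_i$ the corresponding equation is satisfiable with half-integer coordinates, i.e. iff $\sum_{v \in V_i} \epsilon_v \cdot \frac12 \equiv 0 \pmod{1}$ after adjusting by integers, which holds iff $|V_i|$ is even. (For the non-balanced components the relevant coordinates are unconstrained, so they impose nothing.) Therefore $\span B_G \cap \Lambda_W$ strictly contains $\span B_G \cap \ZZ^v$ — doubling the index — precisely when every balanced component has an even number of vertices; if some balanced component has odd order, no half-integer vector survives and the index is unchanged. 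This yields exactly the claimed dichotomy: $m_W(B_G) = 2^{c_-(G)+1}$ when $G$ has no odd balanced component, and $m_W(B_G) = 2^{c_-(G)}$ otherwise.

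The step I expect to be the main obstacle is the careful bookkeeping of \emph{switching classes}: the precise linear equation cut out by a balanced component $G_i$ depends on a choice of signs $\epsilon_v = \pm 1$ on $V_i$ (any two differ by a switching, i.e. by negating a subset of vertices), and I need to check that the parity condition "$\sum \epsilon_v$ is achievable by a half-integer solution" is switching-invariant and reduces cleanly to "$|V_i|$ is even." Switching by a vertex $u$ flips $\epsilon_u$, changing $\sum_{v} \epsilon_v$ by $\pm 2$, hence not affecting its parity — so the condition is well-defined — and for the canonical switching making all edges positive the equation is $\sum_{v \in V_i} x_v = 0$, where a constant half-integer solution exists iff $|V_i|$ is even. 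I would also want to double-check the edge case where $G$ has no balanced components at all (then $\span B_G = \RR^v$, which trivially contains $h$, so the index doubles, consistent with "no odd balanced components" being vacuously true), and confirm that components with loops are handled by Lemma \ref{lemma:B}'s observation that they behave like unbalanced loopless components for span purposes. Everything else is routine index arithmetic.
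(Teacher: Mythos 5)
Your proposal is correct and follows essentially the same route as the paper: reduce to the index $[\span B_G \cap \Lambda_W : \span B_G \cap \ZZ^v]$ (equivalently, ask whether $\span B_G$ meets $\ZZ^v + \tfrac12\mathbf{1}$), identify the defining equations of $\span B_G$ as one equation $\sum_{v\in V_i}\epsilon_v x_v=0$ per balanced component, and observe that a vector with all coordinates in $\tfrac12+\ZZ$ can satisfy such an equation iff $|V_i|$ is even. Your extra checks (switching-invariance of the parity, the vacuous case with no balanced components) are correct, though inessential; one small slip is the phrase ``a constant half-integer solution exists iff $|V_i|$ is even'' --- what is actually needed, and what your earlier computation shows, is a (not necessarily constant) vector in $\tfrac12 + \ZZ^{V_i}$ satisfying the equation.
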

\begin{proof}The first statement follows from Lemma \ref{lemma:B} since $\Lambda_R = \ZZ^v$. 

For the second one, since $\ZZ B_G \subseteq (\span B_G \cap \Lambda_R) \subseteq (\span B_G \cap \Lambda_W)$ and we already computed $[(\span B_G \cap \Lambda_R) : \ZZ B_G] = 2^{c_-(G)}$, we just need to find the index $
[(\span B_G \cap \Lambda_W) : (\span B_G \cap \Lambda_R)]$ and multiply it by $2^{c_-(G)}$. 

Note that the subspace $\span B_G$ has codimension $c_+(G)$, and is cut out by equations of the form $\pm x_{v_1} \pm \cdots \pm x_{v_c}=0$, one for each balanced component $G_i$ with vertices $v_1, \ldots, v_c$. The signs in this equation depend on the signs of the edges of $G_i$. Now consider $a \in (\span B_G \cap \Lambda_W)$. 

If $G$ has an odd balanced component, then the equation $\pm x_{v_1} \pm \cdots \pm x_{v_c}=0$ corresponding to that component cannot be satisfied by a vector in $\ZZ^v + \frac12 \1$, so $a \in \ZZ^v$. Therefore $(\span B_G \cap \Lambda_W) = (\span B_G \cap \Lambda_R)$.

On the other hand, if $G$ has no odd balanced components, then $a$ can be in $\ZZ^v$ or in $\ZZ^v + \frac12 \1$. Therefore in that case $[(\span B_G \cap \Lambda_W) : (\span B_G \cap \Lambda_R)] = 2$.
\end{proof}

\begin{lemma}\label{lemma:C'}
$(\mathbf{C_v})$: 
For any signed graph $G$ with $c_-(G)$ unbalanced loopless components, and $c_0(G)$ components with loops, 
\[
m_R(C_G) = 
\begin{cases}
1 & \mbox{if $G$ is balanced} \\
2^{c_-(G)+c_0(G)-1} & \mbox{if $G$ is unbalanced}
\end{cases}
\qquad
m_W(C_G) = 2^{c_-(G)+c_0(G)}
\]
\end{lemma}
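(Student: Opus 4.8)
The plan is to deduce both equalities from Lemma~\ref{lemma:C}, which already gives $m_\ZZ(C_G) = 2^{c_0(G)+c_-(G)}$. Since the weight lattice of $C_v$ is $\ZZ^v$ itself, I immediately get $m_W(C_G) = m_\ZZ(C_G) = 2^{c_0(G)+c_-(G)}$, with nothing more to prove. For $m_R$ I will use the chain of full-rank sublattices
\[
\ZZ C_G \ \subseteq\ \span C_G \cap \Lambda_R \ \subseteq\ \span C_G \cap \ZZ^v ,
\]
whose outermost index is $m_\ZZ(C_G) = 2^{c_0+c_-}$. By multiplicativity of indices, $m_R(C_G) = 2^{c_0+c_-}/[\span C_G \cap \ZZ^v : \span C_G \cap \Lambda_R]$, and because $\Lambda_R$ has index $2$ inside $\ZZ^v$, the remaining index is $1$ or $2$ --- it is $2$ exactly when $\span C_G$ contains an integer vector of odd coordinate sum, and $1$ otherwise.

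It then remains to settle this dichotomy. If $G$ is balanced then $c_0(G)=c_-(G)=0$, so $m_\ZZ(C_G)=1$ and hence $\span C_G \cap \ZZ^v = \ZZ C_G$; every generator $e_i\pm e_j$ of $C_G$ has even coordinate sum, so this lattice sits inside $\Lambda_R$, the remaining index is $1$, and $m_R(C_G)=1$. If $G$ is unbalanced I will pick an unbalanced connected component $G_i$: by Lemma~\ref{lemma:C} its rank is $r(C_{G_i}) = v(G_i) - c_+(G_i) = v(G_i)$, since a connected unbalanced signed graph has no balanced component, so $\span C_{G_i} = \RR^{V(G_i)}$ is a full coordinate subspace. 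Then for any $w \in V(G_i)$ the standard vector $e_w$ lies in $\span C_G \cap \ZZ^v$ and has coordinate sum $1$, so the remaining index is $2$ and $m_R(C_G) = 2^{c_0+c_--1}$, as claimed.

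I do not expect a real obstacle here; the argument is short given Lemma~\ref{lemma:C}. The one point that needs a line of care is certifying that the extra index $[\span C_G \cap \ZZ^v : \span C_G \cap \Lambda_R]$ is exactly $2$ (not merely at most $2$) in the unbalanced case, for which the explicit odd-sum witness $e_w$ coming from the rank count above is the cleanest device; a more hands-on alternative is to run the edge-contraction reduction from the proof of Lemma~\ref{lemma:B} on an unbalanced cycle of $G_i$ until only a pair $e_i\pm e_j$ remains, which again spans $e_i$. It is also worth stating explicitly that ``$G$ balanced'' forces $G$ to be loopless, matching the bookkeeping $c_0(G)=c_-(G)=0$ in that case.
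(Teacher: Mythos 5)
Your proof is correct, and it follows the same overall strategy as the paper: deduce $m_W$ immediately from $\Lambda_W(C_v)=\ZZ^v$ and Lemma~\ref{lemma:C}, then compute $m_R$ by dividing $m_\ZZ(C_G)$ by the index $[\span C_G\cap\ZZ^v:\span C_G\cap\Lambda_R]\in\{1,2\}$. The one difference is in how that residual index is pinned down. The paper works with the linear equations that cut out $\span C_G$ (one equation $\pm x_{v_1}\pm\cdots\pm x_{v_c}=0$ per balanced component, inherited from the proof of Lemma~\ref{lemma:B'}): in the balanced case these combine to force $\sum a_i$ even for any $a\in\span C_G\cap\ZZ^v$, and in the unbalanced case the coordinates in unbalanced components are unconstrained. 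You instead argue from the generators side: in the balanced case $m_\ZZ(C_G)=1$ forces $\span C_G\cap\ZZ^v=\ZZ C_G\subseteq\Lambda_R$, and in the unbalanced case the rank count $r(C_{G_i})=v(G_i)$ for an unbalanced component $G_i$ produces an explicit odd-sum witness $e_w$. The two arguments are dual to one another (equations versus generators and rank) and equally short; your use of $m_\ZZ(C_G)=1$ in the balanced case is a slightly slicker shortcut than reassembling the defining equations, while the unbalanced case is, up to phrasing, the same observation the paper makes.
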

\begin{proof}
The second statement follows from Lemma \ref{lemma:C} since $\Lambda_W = \ZZ^v$. For the first one, we need to divide 
$2^{c_-(G)+c_0(G)}$ by the index $[(\span B_G \cap \Lambda_W) : \span (B_G \cap \Lambda_R)]$, which we now compute. 
Consider $a \in (\span B_G \cap \Lambda_W)$. 

Recall the equations of $\span B_G$ from the proof of Lemma \ref{lemma:B'}. 
If $G$ is balanced, then all its components are balanced, and combining their equations we get an equation of the form $\pm x_1 \pm \cdots \pm x_v = 0$, which $a$ satisfies. But then $a \in \Lambda_W = \ZZ^v$ implies that $a_1+\cdots+a_v$ is even, which means that  $a \in (\span B_G \cap \Lambda_R)$. It follows that $(\span B_G \cap \Lambda_W) = (\span B_G \cap \Lambda_R)$.

On the other hand, if $G$ is not balanced, then the vertices in the unbalanced components are not involved in the equations of $B_G$. Therefore $a_1+\cdots+a_v$ can be even or odd, and $[(\span B_G \cap \Lambda_W) : (\span B_G \cap \Lambda_R)] = 2$.
\end{proof}

\begin{lemma}\label{lemma:D'}
$(\mathbf{D_v})$: 
For any loopless signed graph $G$ with $c_-(G)$ unbalanced loopless components, 
\begin{eqnarray*}
m_R(C_G) &=& 
\begin{cases}
1 & \mbox{if $G$ is balanced} \\
2^{c_-(G)-1} & \mbox{if $G$ is unbalanced}
\end{cases}
\\
m_W(C_G) &=& 
\begin{cases}
2^{c_-(G)} & \mbox{if $G$ has odd balanced components} \\
2^{c_-(G)+1} & \mbox{otherwise}
\end{cases}
\end{eqnarray*}
\end{lemma}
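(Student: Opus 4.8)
The plan is to deduce this lemma from Lemmas~\ref{lemma:B'} and~\ref{lemma:C'}, in the same way that Lemma~\ref{lemma:D} was deduced from Lemma~\ref{lemma:B}. (Here the relevant vector configuration is of course $D_G$; the ``$C_G$'' in the statement is a slip.) The one ingredient needed is the trivial but essential remark that the multiplicity $m_\Lambda(B) = [\,\span B \cap \Lambda : \ZZ B\,]$ of a list of vectors $B$ depends only on $B$ and on the ambient lattice $\Lambda$, and not on which root system we declare $B$ to be a subset of. So it is enough to match up the vector configurations and the lattices.

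First, since $G$ is loopless, the configuration $D_G$ literally coincides with $B_G$ and with $C_G$: the three differ only in the vectors contributed by loops ($e_i$ for type $B$, $2e_i$ for type $C$, and none for type $D$), and here there are no loops. Second, consulting the tables of root and weight lattices in the preliminaries, $\Lambda_R(D_v) = \Lambda_R(C_v) = \{\sum a_i e_i : a_i \in \ZZ,\ \sum a_i \text{ even}\}$ and $\Lambda_W(D_v) = \Lambda_W(B_v) = \ZZ\{e_1,\dots,e_v,(e_1+\cdots+e_v)/2\}$. Therefore $m_R(D_G) = m_R(C_G)$ and $m_W(D_G) = m_W(B_G)$. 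The first identity together with Lemma~\ref{lemma:C'} specialized to $c_0(G)=0$ gives $m_R(D_G)=1$ when $G$ is balanced and $m_R(D_G)=2^{c_-(G)-1}$ when $G$ is unbalanced; the second identity together with Lemma~\ref{lemma:B'} gives $m_W(D_G)=2^{c_-(G)}$ when $G$ has an odd balanced component and $m_W(D_G)=2^{c_-(G)+1}$ otherwise. This is precisely the claimed formula.

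There is no real obstacle here beyond bookkeeping, and the only alternative — re-running the leaf-pruning and edge-contraction determinant computations of Lemma~\ref{lemma:B}/\ref{lemma:B'} directly for $D_G$ — would simply duplicate work already done. Any concern about degenerate cases (say $v \le 2$, or $G$ with no edges) is inherited word for word from Lemmas~\ref{lemma:B'} and~\ref{lemma:C'} rather than arising from the present reduction.
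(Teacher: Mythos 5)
Your proof is correct and amounts to the same reduction the paper intends: the paper's own argument for Lemma~\ref{lemma:D'} is simply ``Combining the arguments of Lemma~\ref{lemma:B'} and~\ref{lemma:C'} we obtain the desired result,'' and your version merely makes the identification precise — for loopless $G$ the configurations $B_G$, $C_G$, $D_G$ literally coincide, $\Lambda_R(D_v)=\Lambda_R(C_v)$, and $\Lambda_W(D_v)=\Lambda_W(B_v)$, so $m_R(D_G)=m_R(C_G)$ and $m_W(D_G)=m_W(B_G)$ are equalities of numbers, not just analogies of proofs. If anything your write-up is a cleaner way to phrase the paper's intended argument.
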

\begin{proof}
In type $D$, the lattice $\ZZ^v$ is a sublattice of $\Lambda_W$ with index $2$, and contains $\Lambda_R$ as a sublattice of index $2$. Combining the arguments of Lemma \ref{lemma:B'} and \ref{lemma:C'} we obtain the desired result. 
\end{proof}

\subsection{\textsf{Computing the Tutte polynomials by signed graph enumeration}}\label{sec:computebygraphs}

\begin{reptheorem}{thm:Tutte}\cite{Ardila}
The Tutte generating functions of the classical root systems are
\begin{eqnarray*}
\overline{X}_A &=& F(Z,Y)^X\\
\overline{X}_B &=& F(2Z,Y)^{(X-1)/2}F(YZ,Y^2)\\
\overline{X}_C &=& F(2Z,Y)^{(X-1)/2}F(YZ,Y^2)\\
\overline{X}_D &=& F(2Z,Y)^{(X-1)/2}F(Z,Y^2)
\end{eqnarray*}
\end{reptheorem}

Recall from Definition \ref{def:Tuttegen} that the (arithmetic) Tutte generating function is given in terms of the (arithmetic) coboundary polynomials, which are simple transformations of the Tutte polynomial. To write down the exponential generating function $T_\Phi(x,y,z) = \overline{X}_\Phi(X,Y,Z)$ for the actual Tutte polynomials, we simply substitute
\[
X = (x-1)(y-1), \qquad Y=y, \qquad Z=\frac{z}{y-1}.
\]

%

\begin{proof}[Proof of Theorem \ref{thm:Tutte}] In view of Lemmas \ref{lemma:A}, \ref{lemma:B}, and \ref{lemma:C}, each one of these Tutte polynomial computations is a special case of the graph enumeration problems we already solved.

\medskip
\noindent \textsf{\textbf{Type} A:}
By Lemma \ref{lemma:A} we have
\[
T_{A_{v-1}}(x,y)= \sum_{G \textrm{ graphs on } [v]} (x-1)^{c(G)-1}(y-1)^{e(G) - v(G) + c(G)}
\]
for $v \geq 1$, so
\begin{eqnarray*}
1+(x-1)\sum_{v\geq 1}T_{A_{v-1}}(x,y)\frac{z^v}{v!} &=& G\left((x-1)(y-1), y-1, \frac{z}{y-1}\right) \\
&=& G(X,Y-1,Z)
\end{eqnarray*}
where $G$ is the generating function for unsigned graphs, computed in Theorem \ref{thm:masterA}. The result follows.

\medskip
\noindent \textsf{\textbf{Types} B,C:}
 The ordinary Tutte polynomial does not distinguish $B_v$ and $C_v$. Using signed graphs, Lemma \ref{lemma:B} gives
\[
T_{BC_v}(x,y) = \sum_{G \textrm{ signed graphs on } [v]} (x-1)^{c_+(G)}(y-1)^{l(G) + e(G) - v(G) + c_+(G)}
\]
so
\begin{eqnarray*}
\sum_{v\geq 0}T_{BC_v}(x,y)\frac{z^v}{v!} 
&=& S\left((x-1)(y-1),1,1,y-1, y-1, \frac{z}{y-1}\right) \\
&=& S(X,1,1,Y-1,Y-1,Z)
\end{eqnarray*}
where $S$ is the generating function for signed graphs,
and then we can plug in the formula from Theorem \ref{thm:master}. 
 
 \medskip
\noindent \textsf{\textbf{Type} D:}
 Also
\[
T_{D_v}(x,y)= \sum_{\stackrel{G \textrm{ loopless signed}}{\textrm{ graphs on } [v]}} (x-1)^{c_+(G)}(y-1)^{e(G) - v(G) + c_+(G)}.
\]
If we set $t_0=x=0$ in the master generating function for signed graphs, we will obtain the generating function for loopless signed graphs, so
\[
\sum_{v\geq 0}T_{D_v}(x,y)\frac{z^v}{v!} = 
S(X,1,0,0,Y-1,Z)
\]
and the result follows.
\end{proof}

\begin{reptheorem}{thm:TutteZ} The arithmetic Tutte generating functions of the classical root systems \textbf{in their integer lattices} are
\begin{eqnarray*}
\Psi_A &=& F(Z,Y)^X\\
\Psi_B &=& F(2Z,Y)^{\frac{X}2-1}F(Z,Y^2)F(YZ,Y^2)\\
\Psi_C &=& F(2Z,Y)^{\frac{X}2-1}F(YZ,Y^2)^2\\
\Psi_D &=& F(2Z,Y)^{\frac{X}2-1}F(Z,Y^2)^2
\end{eqnarray*}
\end{reptheorem}
%

\begin{proof} We proceed as above.

\medskip
\noindent \textsf{\textbf{Type} A:} In this case $m(A)=1$ for all subsets of $A_{v-1}$, so we still have
\[
1+(x-1)\sum_{v\geq 0}M_{A_{v-1}}(x,y)\frac{z^v}{v!} = 
G(X,Y-1,Z)
\]

\medskip
\noindent \textsf{\textbf{Types} B, C, D:} By Lemmas \ref{lemma:B},
\ref{lemma:C}, and
\ref{lemma:D}, we now have
\begin{eqnarray*}
\sum_{v\geq 0}M_{B_v}(x,y)\frac{z^v}{v!} &=& 
S(X,2,1,Y-1,Y-1,Z) \\
\sum_{v\geq 0}M_{C_v}(x,y)\frac{z^v}{v!} &=& 
S(X,2,2,Y-1,Y-1,Z) \\
\sum_{v\geq 0}M_{D_v}(x,y)\frac{z^v}{v!} &=& 
S(X,2,0,0,Y-1,Z)
\end{eqnarray*}
\noindent  and the desired results follow.
\end{proof}

\begin{reptheorem}{thm:TutteR} The arithmetic Tutte generating functions of the classical root systems \textbf{in their root lattices} are
\begin{eqnarray*}
\Psi^R_A &=& F(Z,Y)^X\\
\Psi^R_B &=& F(2Z,Y)^{\frac{X}2-1}F(Z,Y^2)F(YZ,Y^2)\\
\Psi^R_C &=& \frac12 F(2Z,Y)^{\frac{X}2-1} \left[F(2Z,Y)+F(YZ,Y^2)^2\right]\\
\Psi^R_D &=& \frac12 F(2Z,Y)^{\frac{X}2-1}\left[F(2Z,Y) + F(Z,Y^2)^2\right]\\
\end{eqnarray*}
\end{reptheorem}

%

\begin{proof}
\noindent \textsf{\textbf{Types} A,B:}
Here we have $M_{\Phi}^R = M_{\Phi}$, so the result matches  Theorem \ref{thm:TutteZ}. 

\medskip
\noindent \textsf{\textbf{Types} C, D:}
Lemma \ref{lemma:C'} tells us that
\[
M^R_{C_v} =
\sum_{G \textrm{ unbalanced}} 2^{c_-+c_0-1}(x-1)^{c_+}(y-1)^{l+e-v+c_+} 
+ \sum_{G \textrm{ balanced}} (x-1)^{c_+}(y-1)^{l+e-v+c_+}. 
\]
Since balanced graphs have no loops and satisfy $2^{c_-+c_0}=1$,
\[
2M^R_{C_v} = 
\sum_{G} 2^{c_-+c_0}(x-1)^{c_+}(y-1)^{l+e-v+c_+} 
+\sum_{G \textrm{ balanced}} (x-1)^{c_+}(y-1)^{e-v+c_+}. 
\]
Therefore $\displaystyle 2\sum_{v\geq 0}M^R_{C_v}(x,y)\frac{z^v}{v!}$ equals
\[
S(X,2,2,Y-1,Y-1,Z)+B(X,Y-1,Z)
\]
which gives the desired formula for type $C$, in view of (\ref{eq:B}) in the proof of Theorem \ref{thm:master}. A similar argument works for type $D$.
\end{proof}

\begin{reptheorem}{thm:TutteW} The arithmetic Tutte generating functions of the classical root systems \textbf{in their weight lattices} are
\[
\Psi^W_A = 
\sum_{n \in \NN} \varphi(n)\left(\left[F(Z,Y)F(\omega_n Z, Y) F(\omega_n^2 Z, Y) \cdots F(\omega_n^{n-1} Z, Y)\right]^{X/n}-1\right)
\]
where $\varphi$ is Euler's totient function and $\omega_n$ is a primitive $n$th root of unity, 
\begin{eqnarray*}
\Psi^W_B &=& F(2Z,Y)^{\frac{X}4-1}F(Z,Y^2)F(YZ,Y^2)\left[F(2Z,Y)^{\frac{X}4} + F(-2Z,Y)^{\frac{X}4}\right]
\\
\Psi^W_C &=& F(2Z,Y)^{\frac{X}2-1}F(YZ,Y^2)^2\\
\Psi^W_D &=& F(2Z,Y)^{\frac{X}4-1}F(Z,Y^2)^2\left[F(2Z,Y)^{\frac{X}4} + F(-2Z,Y)^{\frac{X}4}\right]
\end{eqnarray*}
\end{reptheorem}

%

\begin{proof}
\noindent \textsf{\textbf{Type} A:} Let
\[
CG_d(y,z) = \sum_{\stackrel{G \textrm { conn. graph}}{d | v(G)}} y^e \frac{z^v}{v!}, \qquad
G_d(t,y,z)= \sum_{\stackrel{G \textrm{ graph}}{\gcd(v_1, v_2, \ldots v_c)=d}}  t^c y^e \frac{z^v}{v!}
\]
where $v_1, \ldots, v_c$ denote the sizes of the connected components of $G$.
By Lemma \ref{lemma:A'}, $\displaystyle 1+(x-1)\sum_{v\geq 0}M^W_{A_{v-1}}(x,y)\frac{z^v}{v!}=H(X,Y-1,Z)$, where 
\[
H(t,y,z) := \sum_{G \textrm{ graph}} \gcd(v_1, v_2, \ldots v_c) \,  t^c y^e \frac{z^v}{v!} 
= \sum_{d \in \NN} d \, G_d(t,y,z).
\]
Now, the compositional formula gives
\[
e^{tCG_d(y,z)}  = \sum_{\stackrel{G \textrm{ graph}}{d | \gcd(v_1, \ldots, v_c)}}  t^c y^e \frac{z^v}{v!} = 1+\sum_{n \in \NN \, : \, d | n} G_n(t,y,z)
\]
The dual M\"obius inversion formula\footnote{A bit of care is required in applying dual M\"obius inversion, since we are dealing with infinite sums.
We can resolve this by proving the equality separately for each $z$-degree $z^N$, since there are finitely many summands contributing to this degree, namely, those corresponding to divisors of $N$.} \cite[Proposition 3.7.2]{EC1} then gives
\[
G_d(t,y,z) =  \sum_{n \in \NN \, : \, d | n} \mu\left(\frac{n}{d}\right) \left(e^{tCG_n(y,z)}-1 \right) 
\]
where $\mu: \NN \rightarrow \ZZ$ is the M\"obius function. It follows that
\begin{eqnarray}
H(t,y,z) &=& \sum_d \left(d  \sum_{n \in \NN \, : \, d | n}  \mu\left(\frac{n}{d}\right) \left(e^{tCG_n(y,z)}-1\right) \right)  \notag \\
&=& \sum_{n \in \NN} \varphi(n) \, \left(e^{tCG_n(y,z)}-1\right) \label{eq:An}
\end{eqnarray}
where we are using that $\displaystyle \varphi(n) = \sum_{d | n} d \, \mu\left(\frac nd\right)$. \cite{NZ} 

Finally, to write this expression in the desired form, notice that if $\omega$ is a primitive $n$th root of unity, then 
\[
1 + \omega^v + \omega^{2v} + \cdots + \omega^{(n-1)v} = 
\begin{cases} 
n & \mbox{ if } n|v, \\
0 & \mbox{ otherwise.}
\end{cases}
\]
Therefore
\begin{eqnarray*}
CG_n(y,z) &=&  \sum_{\stackrel{G \textrm { conn. graph}}{n | v(G)}} y^e \frac{z^v}{v!} \\
&=& \frac1n \left(CG(y,z)+CG(y, \omega z) + CG(y, \omega^2 z) + \cdots + +CG(y, \omega^{n-1} z) \right) \\
&=&\frac1n  \log\left(F(z,1+y) F(\omega z, 1+y) \cdots F(\omega^{n-1} z, 1+y) \right). 
\end{eqnarray*}
The result follows.

\noindent \textsf{\textbf{Type} B:}
Let a \emph{nobc graph} be a signed graph with no odd balanced components. By Lemma \ref{lemma:B'} we have
\[
M^W_{B_v}(x,y) =
\sum_{G \textrm{ signed}} 2^{c_-}(x-1)^{c_+}(y-1)^{l+e-v+c_+} 
+ \sum_{G \textrm{ nobc}} 2^{c_-}(x-1)^{c_+}(y-1)^{l+e-v+c_+} 
\]
so $\displaystyle \sum_{v\geq 0}M^W_{B_v}(x,y)\frac{z^v}{v!}$ equals
\[
S(X,2,1,Y-1,Y-1,Z) + Q(X,2,1,Y-1,Y-1,Z)
\]
where $Q$ enumerates nobc graphs, using the notation of Definition \ref{def:table}. Let $CQ_+$ enumerate even connected balanced signed graphs.

\medskip
\noindent
\begin{tabular}{|c|c|l|}
\hline
number & gen. fn. & type of graph \\
\hline
$q(c_+,c_-,c_0,l,e,v)$ & $Q(t_+,t_-,t_0,x,y,z)$ & nobc graphs  \\
$q_+(e,v)$ & $CQ_+(y,z)$ &  even conn. bal. signed graphs \qquad\qquad  \\
\hline
\end{tabular}
\medskip

It remains to notice that 
\[
2CQ_+(y,z) = CS_+(y,z) + CS_+(y,-z) 
\]
and by the compositional formula
\[
Q = e^{t_+CQ_+ + t_-CS_-+t_0CS_0}
\]
so
\[
Q= S\left(\frac{t_+}2,t_-,t_0,x,y,z\right) B\left(\frac{t_+}2,y,-z\right).
\]
Plugging in the formulas for $S$ and $B$ we obtain the desired result. 

\noindent \textsf{\textbf{Type} C:}
Here we have $M_{\Phi}^R = M_{\Phi}$ in type $C$.

\noindent \textsf{\textbf{Type} D:}
This case is very similar to type $B$. We omit the details.
\end{proof}

\begin{remark}
As we mentioned in the introduction, our formula for $\Psi_A^W$ seems less tractable than the other ones. However, we have an alternative formulation which is quite efficient for computations.
We can rewrite the expression (\ref{eq:An}) as
\begin{equation}
\Psi_A^W(X,Y,Z) = \sum_{n \in \NN} \varphi(n) \left(e^{X \cdot  CG_n(Y-1,Z)} - 1 \right). \label{eq:An'}
\end{equation}
Notice that the coefficient of $z^N$ in $\Psi_A^W$ only receives contributions from the terms in the right hand side where $n | N$. To compute those contributions, we begin by writing down the first $N$ terms of the generating function $CG(Y-1,Z) = \log F(Z,Y)$. Now computing the  terms of $CG_n(Y-1,Z)$ up to order $z^N$ is trivial: simply keep every $n$-th term of $CG(Y-1,Z)$. Then we plug these into (\ref{eq:An'}) and extract the coefficient of $z^N$.
\end{remark}

\section{\textsf{Arithmetic characteristic polynomials}}\label{sec:1var}

As a corollary, we obtain formulas for the arithmetic characteristic polynomials of the classical root systems, which are given by
\[
\chi^\Lambda_A(q)= (-1)^r q^{n-r} M^\Lambda_A(1-q,0).
\]
For the integer lattices, we have:

\begin{theorem}\label{thm:charZ} The arithmetic characteristic polynomials of the classical root systems \textbf{in their integer lattices} are
\begin{eqnarray*}
\chi^\ZZ_{A_{n-1}}(q) &=& q(q-1)\cdots (q-n+1)\\
\chi^\ZZ_{B_n}(q) &=& (q-2)(q-4)(q-6)\cdots (q-2n+4)(q-2n+2)(q-2n) \\
\chi^\ZZ_{C_n}(q) &=& (q-2)(q-4)(q-6)\cdots (q-2n+4)(q-2n+2) (q-n) \\
\chi^\ZZ_{D_n}(q) &=& (q-2)(q-4)(q-6)\cdots (q-2n+4) (q^2 - 2(n-1)q + n(n-1))
\end{eqnarray*}
\end{theorem}

\begin{proof}
Since $\chi_A(q) = q^{n-r} \psi_A(q,0)$, we can compute the generating function for the characteristic polynomial from the Tutte generating function by substituting $X=q, Y=0, Z=z$. The generating function in type $A$ (where $q^{n-r}=q$) is 
\[
1+ \sum_{n \geq 0} \chi_{A_{n-1}}(q)\frac{z^n}{n!} = 
F(z,0)^q = (1+z)^q = \sum_{n \geq 0} q(q-1)\cdots (q-n+1) \frac{z^n}{n!}.
\]
In type $C$ it is
\[
F(2z,0)^{\frac{q}2-1}F(0,0)^2 = (1+2z)^{\frac{q}2-1}= \sum_{n \geq 0} \left(\frac q2 -1\right)\left(\frac q2 - 2\right)\cdots \left(\frac q2 - n\right) \frac{2^nz^n}{n!} 
\]
from which $\chi_{C_n}(q) = (q-2)(q-4)\cdots (q-2n)$. In type $B$ it is
\[
F(2z,0)^{\frac{q}2-1}F(z,0)F(0,0) = (1+2z)^{\frac{q}2-1}(1+z)
\]
and in type $C$ it is
\[
F(2z,0)^{\frac{q}2-1}F(z,0)^2 = (1+2z)^{\frac{q}2-1}(1+z)^2
\]
from which the formulas follow.
\end{proof}

We obtain similar results for the characteristic polynomials in the other lattices; we omit the details. The most interesting formula that arises is the following:

\begin{reptheorem}{thm:charWA} The arithmetic characteristic polynomials of the root systems $A_{n-1}$ \textbf{in their weight lattices} are given by
\[
\chi^W_{A_{n-1}}(q) =  \frac{n!}q \sum_{m | n} (-1)^{n-\frac nm} \varphi(m) {q/m \choose n/m}
\]
In particular, when $n\geq 3$ is prime, 
\[
\chi^W_{A_{n-1}}(q) = (q-1)(q-2)\cdots (q-n+1) + (n-1)(n-1)!.
\]
\end{reptheorem}

\begin{proof}
Since $F(z,0) = 1+z$, we have
\[
CG(-1,z) = \log(1+z) = -\sum_{k \geq 1}\frac{(-z)^k}{k}
\]
and
\[
CG_m(-1,z) = -\sum_{k \geq 1}\frac{(-z)^{km}}{km} = \frac1m \log(1-(-z)^m).
\]
Substituting $X=q, Y=0, Z=z$ into (\ref{eq:An'}) (where we now have $q^{n-r}=q$) we get
\begin{eqnarray}
1+q\sum_{n \geq 1} \chi^W_{A_{n-1}}(q)\frac{z^n}{n!} &=& \sum_{m \geq 1} \varphi(m) \left(e^{qCG_m(-1,z)}-1\right) \label{sumachi} \\
&=& \sum_{m \geq 1} \varphi(m)\left(\left[1-(-z)^m\right]^{q/m} -1\right) \notag \\
&=& \sum_{m \geq 1} \varphi(m) \left( \sum_{i \geq 1} {q/m \choose i} (-1)^i(-z)^{mi}\right) \notag
\end{eqnarray}
so the coefficient of $z^n$ is 
\[
q \frac{\chi^W_{A_{n-1}}(q)}{n!} = \sum_{m | n} (-1)^{n-\frac nm} \varphi(m) {q/m \choose n/m}
\]
In particular, if $n\geq 3$ is prime we get
\[
q \frac{\chi^W_{A_{n-1}}(q)}{n!} = {q \choose n} + \varphi(n) \frac{q}{n} 
\]
which gives the desired formula.
\end{proof}

A surprising special case is:

\begin{reptheorem}{cor:charW}
If $n,q$ are integers with $n$ odd and $n|q$, then $\chi^W_{A_{n-1}}(q)/n!$ equals the number of cyclic necklaces with $n$ black beads and $q-n$ white beads.
\end{reptheorem}

\begin{proof}
Let $X$ be the set of ``rooted" necklaces consisting of $n$ black beads and $q-n$ white beads around a circle, with one distinguished location called the ``root". There are ${n \choose q}$ of them. The cyclic group $C_q$ acts on $X$ by rotating the necklaces, while keeping the location of the root fixed. Burnside's lemma \cite{Burnside} (which is not Burnside's\cite{Neumann}) then gives us the number of cyclic necklaces: 
\[
\# \textrm{ of $C_q$-orbits of $X$} = \frac1q \sum_{g \in C_q} |X^g|
\]
where $X^g$ is the number of elements of $X$ fixed by the action of $g$. If $(g,q)=d$, then $gX = dX$. 
Let $m=q/d$. A necklace fixed by $g$ consists of $m$ repetitions of the initial string of $d$ beads, starting from the root. Out of these $d=q/m$ beads, $n/m$ must be black. Therefore $|X^g| =  {q/m \choose n/m}$, and we must have $m|q$ and $m|n$.

Now, since there are $\varphi(m)$ elements $g \in C_q$ with $(g,q)=d$, we get
\[
 \# \textrm{ of cyclic necklaces} 
= \frac1q \sum_{m|q, \, m|n} \varphi(m) {q/m \choose n/m}
\]
which matches the expression of Theorem \ref{thm:charWA} when $n$ is odd and $n|q$.
\end{proof}

This result is similar and related to one of Odlyzko and Stanley; see \cite{OdS} and \cite[Problem 1.27]{EC1}. Our proof is  indirect; it would be interesting to give a bijective proof. 


We conclude with a positive combinatorial formula for the coefficients of $\chi^W_{A_{n-1}}(q)$.

\begin{theorem} We have $\displaystyle \chi^W_{A_{n-1}}(q) = \sum_{k=1}^n (-1)^{n-k} c_k q^{k-1}$ for
\[
c_k = \sum_{\pi \in S_{n,k}} gcd(\pi), 
\]
where $S_{n,k}$ is the set of permutations of $n$ with $k$ cycles, and $gcd(\pi)$ denotes the greatest common divisor of the lengths of the cycles of $\pi$.
\end{theorem}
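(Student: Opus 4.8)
The plan is to read the coefficients $c_k$ straight off the graph enumeration that already lies behind Theorem~\ref{thm:charWA}, rather than off its closed form. Recall from the proof of Theorem~\ref{thm:TutteW} (Type~$A$) the identity
\[
\Psi_A^W(X,Y,Z)\;=\;H(X,Y-1,Z),\qquad H(t,y,z)=\sum_{G}\gcd\big(v(G_1),\dots,v(G_c)\big)\,t^{\,c(G)}\,y^{\,e(G)}\,\frac{z^{\,v(G)}}{v(G)!},
\]
the sum ranging over all simple graphs $G$ on vertex set $\{1,\dots,v(G)\}$ with connected components $G_1,\dots,G_c$. Specializing by $X=q$, $Y=0$, $Z=z$, exactly as in the proof of Theorem~\ref{thm:charZ}, and using the generating-function identity (\ref{sumachi}) from the proof of Theorem~\ref{thm:charWA}, one gets for every $n\ge 1$
\[
q\,\chi^W_{A_{n-1}}(q)\;=\;n!\,[z^n]\,H(q,-1,z)\;=\;\sum_{G\text{ on }[n]}\gcd\big(v(G_1),\dots,v(G_c)\big)\,q^{\,c(G)}\,(-1)^{e(G)}.
\]
Everything then reduces to evaluating this signed sum over graphs on $[n]$.

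The first ingredient is the elementary identity $\sum_{G\text{ connected on }[m]}(-1)^{e(G)}=(-1)^{m-1}(m-1)!$, which follows from the exponential formula: $\sum_{G\text{ on }[m]}(-1)^{e(G)}=(1-1)^{\binom m2}$ equals $1$ for $m\le1$ and $0$ for $m\ge2$, so the exponential generating function for connected graphs weighted by $(-1)^{e}$ is $\log(1+z)=\sum_{m\ge1}(-1)^{m-1}(m-1)!\,z^m/m!$. The second ingredient is to group $\sum_{G\text{ on }[n]}$ according to the set partition $\Pi=\{B_1,\dots,B_k\}$ of $[n]$ cut out by the components of $G$: such a $G$ is an independent choice of a connected graph on each block $B_i$, with $c(G)=k$, and $\gcd$ of the component sizes is $\gcd(|B_1|,\dots,|B_k|)$, so the contribution of $\Pi$ is
\[
q^{k}\,\gcd(|B_1|,\dots,|B_k|)\prod_{i=1}^{k}(-1)^{|B_i|-1}(|B_i|-1)!\;=\;(-1)^{\,n-k}\,q^{k}\,\gcd(|B_1|,\dots,|B_k|)\prod_{i=1}^{k}(|B_i|-1)!,
\]
using $\sum_i(|B_i|-1)=n-k$.

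The last ingredient is the standard bijection: a set partition of $[n]$ into $k$ blocks together with a cyclic order on each block is precisely a permutation of $[n]$ with $k$ cycles, there are $\prod_i(|B_i|-1)!$ cyclic orderings of the blocks of a fixed $\Pi$, and under this bijection $\gcd(|B_1|,\dots,|B_k|)$ is $\gcd(\pi)$. Hence summing the displayed contribution over all $\Pi$ with $k$ blocks gives $(-1)^{n-k}q^{k}\sum_{\pi\in S_{n,k}}\gcd(\pi)=(-1)^{n-k}c_k q^{k}$, and summing over $k$ yields $q\,\chi^W_{A_{n-1}}(q)=\sum_{k=1}^{n}(-1)^{n-k}c_k q^{k}$; dividing by $q$ gives the theorem.

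None of these steps is deep; the one deserving care is the first, where one must track the factor $q^{\,n-r}$ and the constant-term normalization correctly in passing between $\Psi_A^W$, the characteristic polynomial, and $H(q,-1,z)$, so that the equality $q\,\chi^W_{A_{n-1}}(q)=n!\,[z^n]H(q,-1,z)$ comes out exactly as used in the proof of Theorem~\ref{thm:charWA}. If one prefers to avoid graphs, the same coefficients can be extracted algebraically from the closed formula of Theorem~\ref{thm:charWA} by writing $\gcd(\pi)=\sum_{m\mid\gcd(\pi)}\varphi(m)$ and counting permutations of $[n]$ with $k$ cycles all of length divisible by $m$ via the generating function $(1-z^m)^{-x/m}$, but that route is less transparent.
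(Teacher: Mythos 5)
Your proof is correct, and it takes a genuinely different route from the paper. The paper's proof starts from the closed generating-function identity (Equation~(\ref{sumachi})), expands $e^{qCG_m(-1,z)}$ via the permutation form of the exponential formula to get a sum over permutations whose cycle lengths are divisible by $m$, and then swaps the order of summation, using $\sum_{m \mid d}\varphi(m)=d$ to reassemble $\gcd(\pi)$ at the end. You instead go back one step to the raw combinatorial expansion $\Psi_A^W=H(X,Y-1,Z)$, where $H$ is explicitly a sum over graphs weighted by the $\gcd$ of component sizes, so that $q\,\chi^W_{A_{n-1}}(q)=\sum_{G \text{ on }[n]}\gcd(v_1,\dots,v_c)\,q^{c(G)}(-1)^{e(G)}$. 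From there the only tools needed are the signed count of connected graphs, $\sum_{\text{conn. }G \text{ on }[m]}(-1)^{e(G)}=(-1)^{m-1}(m-1)!$, and the standard bijection between set partitions with cyclically ordered blocks and permutations. The payoff of your route is that the $\gcd$ appears at once with no Euler-$\varphi$ or M\"obius inversion at this stage; you also correctly flag the one bookkeeping step that deserves care (the factor $q$ and the $n-r=0$ normalization for $A_{n-1}$ in the weight lattice), and your handling of it matches the paper. The cost is that you must invoke the graph-theoretic meaning of $H$, which the paper establishes earlier, rather than treating $H$ as a black-box generating function; since the paper has already done that work, this is a fair trade and arguably the more transparent argument.
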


%

\begin{proof}
\noindent
Since 
$$ CG_m(-1,z) = -\sum_{m|k}\frac{(-z)^{k}}{k}$$
the Exponential Formula in its permutation version \cite[Corollary 5.1.9]{EC2} gives 
$$
e^{qCG_m(-1,z)} = 1+ \sum_{n \geq 1} \left((-1)^n\sum_{\pi \in S_n(m)} (-q)^{c(\pi)}\right)\frac {x^n}{n!}
$$
%
where $S_n(m)$ is the set of permutations of $[n]$ whose cycle lengths are multiples of $m$, and $c(\pi)$ denotes the number of cycles of the permutation $\pi$.

Therefore, by (\ref{sumachi}), the coefficient of  $q^k \frac{x^n}{n!}$ for $n\geq 1$ in $1+q\sum_{n \geq 1} \chi^W_{A_{n-1}}(q) {z^n}/{n!}$
is
$$
 \sum_{m \geq 0} \varphi(m) \sum_{\pi \in S_{n,k}(m)} (-1)^{n-k} = 
(-1)^{n-k}  \sum_{\pi \in S_{n,k}} \sum_{m | \gcd(\pi)}  \varphi(m) = 
(-1)^{n-k} \sum_{\pi \in S_{n,k}} \gcd(\pi)
$$
where $S_{n,k}(m)$ is the set of permutations in $S_n(m)$ with $k$ cycles, and we are using that $\sum_{m|d} \varphi(m) = d$.
%
\end{proof}

\section{\textsf{Computations}}\label{sec:computations}

With the aid of Mathematica, we used our formulas to compute the arithmetic Tutte polynomials for the classical root systems in their three lattices in small dimensions. We show the first few polynomials for \textbf{the weight lattice}, which plays the most important role in geometric applications.

%

\bigskip

\medskip

\noindent \begin{tabular}{|l|l|}
\hline
$\Psi$ & Arithmetic Tutte polynomial in the weight lattice \\ \hline
$A_2$				&$1+x$\\		\hline
$A_3$				&$4+x+x^2+3 y$	\\	\hline
$A_4$				&$15+5 x+3 x^2+x^3+20 y+4 x y+12 y^2+4 y^3$\\	\hline
$A_5$				&$96+6 x+11 x^2+6 x^3+x^4+150 y+20 x y+10 x^2 y+135 y^2+15 x y^2$\\
  					&$+95 y^3+5 x y^3+50 y^4+20 y^5+5 y^6 $\\ \hline
$B_2$				&$3+4 x+x^2+4 y+2 y^2$\\ \hline
$B_3$				&$24+17 x+6 x^2+x^3+38 y+10 x y+33 y^2+3 x y^2+22 y^3+12 y^4+6 y^5+2 y^6$\\ \hline
$B_4$				&$153+156 x+62 x^2+12 x^3+x^4+348 y+200 x y+28 x^2 y+438 y^2+132 x y^2$\\
					&$+6 x^2 y^2+420 y^3+60 x y^3+344 y^4+24 x y^4+260 y^5+12 x y^5+184 y^6$\\
					&$+4 x y^6+120 y^7+72 y^8+40 y^9+20 y^10+8 y^11+2 y^12	$\\ \hline
$B_5$				&$1680+1409 x+580 x^2+150 x^3+20 x^4+x^5+4604 y+2436 x y+580 x^2 y$\\
					&$+60 x^3 y+6910 y^2+2350 x y^2+330 x^2 y^2+10 x^3 y^2+7830y^3+1780 x y^3$\\
					&$+150 x^2 y^3+7620 y^4+1200 x y^4+60 x^2 y^4+6846 y^5+804 x y^5+30 x^2 y^5$\\
					&$+5844 y^6+506 x y^6+10 x^2 y^6+4780 y^7+300 x y^7+3780 y^8+180x y^8$\\
					&$+2900 y^9+100 x y^9+2154 y^{10}+50 x y^{10}+1540 y^{11}+20 x y^{11}+1055y^{12}$\\
					&$+5 x y^{12}+690 y^{13}+430 y^{14}+254 y^{15}+140 y^{16}+70 y^{17}+30$\\ \hline
$C_2$				&$3+4 x+x^2+4 y+2 y^2$	\\	\hline
$C_3$				&$15+23 x+9 x^2+x^3+32 y+16 x y+30 y^2+6 x y^2+20 y^3+12 y^4+6 y^5+2 y^6$	\\ \hline
$C_4$				&$105+176 x+86 x^2+16 x^3+x^4+296 y+240 x y+40 x^2 y+396 y^2+168 x y^2$\\
					&$+12 x^2 y^2+376 y^3+88 x y^3+304 y^4+48 x y^4+232 y^5+24 x y^5+168 y^6+8 x y^6$\\
					&$+112 y^7+70 y^8+40 y^9+20 y^{10}+8 y^{11}+2 y^{12}$	\\ \hline
$C_5$				&$945+1689 x+950 x^2+230 x^3+25 x^4+x^5+3264 y+3376 x y+960 x^2 y+80 x^3 y$\\
					&$+5540 y^2+3500 x y^2+540 x^2 y^2+20 x^3 y^2+6640 y^3+2720 x y^3+240 x^2 y^3$\\
					&$+6600 y^4+1920 x y^4+120 x^2 y^4+5956 y^5+1344 x y^5+60 x^2 y^5+5084 y^6$\\
					&$+896 x y^6+20 x^2 y^6+4160 y^7+560 x y^7+3310 y^8+350 x y^8+2580 y^9+200 x y^9$\\
					&$+1952 y^{10}+100 x y^{10}+1420 y^{11}+40 x y^{11}+990 y^{12}+10 x y^{12}+660 y^{13}+420 y^{14}$\\
					&$+252 y^15+140 y^16+70 y^17+30 y^18+10 y^19+2 y^20$\\ \hline
$D_2$				&$1+2x+x^2$\\ \hline
$D_3$				&$15+5 x+3 x^2+x^3+20 y+4 x y+12 y^2+4 y^3$\\ \hline
$D_4$				&$57+88 x+38 x^2+8 x^3+x^4+160 y+112 x y+16 x^2 y+216 y^2+72 x y^2+200 y^3$\\
					&$+24 x y^3+140 y^4+80 y^5+40 y^6+16 y^7+4 y^8$ \\ \hline
$D_5$				&$915+629 x+270 x^2+90 x^3+15 x^4+x^5+2384 y+1096 x y+320 x^2 y+40 x^3 y$\\
					&$+3540 y^2+1080 x y^2+180 x^2 y^2+4060 y^3+840 x y^3+60 x^2 y^3+3930 y^4$\\
					&$+510 x y^4+3376 y^5+264 x y^5+2644 y^6+116 x y^6+1920 y^7+40 x y^7+1310 y^8$\\
					&$+10 x y^8+840 y^9+504 y^{10}+280 y^{11}+140 y^{12}+60 y^{13}+20 y^{14}+4 y^{15}$ \\
\hline
\end{tabular}

\newpage

This also gives us the arithmetic characteristic polynomial of $A$ and the Ehrhart polynomial of the zonotope $Z(A)$:
\[
\chi_A(q)=(-1)^r M_A(1-q,0), \qquad E_{Z(A)}(t) = t^rM_A(1+1/t,1)
\]
As explained in the introduction, the arithmetic characteristic polynomial carries much information about the complement of the toric arrangement of $A$, over the compact torus $(\SS_1)^n$, the complex torus $(\CC^*)^n$, or the finite torus $(\FF_q^*)^n$. When $A$ is a finite root system, Moci showed \cite{Mociroot} that $|\chi_A(0)|$ equals the size of the Weyl group $W$, as can be verified below.

The Ehrhart polynomial enumerates the lattice points of the zonotope $Z(A)$. As explained in the introduction, it also gives us the dimensions of the Dahmen--Micchelli space and the De Concini--Procesi--Vergne space.

\medskip

\noindent \begin{tabular}{|l|l|l|}
\hline
$\Psi$ & Characteristic Polynomial & Ehrhart Polynomial \\ \hline
$A_2$	&$-2+q$											& $1+2t$	\\
$A_3$	&$6 - 3 q + q^2$									& $1+3 t+9 t^2$\\
$A_4$	&$-24 + 14 q - 6 q^2 + q^3$						& $1+6 t+18 t^2+64 t^3$\\
$A_5$	&$120 - 50 q + 35 q^2 - 10 q^3 + q^4$			& $1+10 t+45 t^2+110 t^3+625 t^4$\\
\hline
$B_2$	&$8-6 q+q^2$										&$1+6 t+14 t^2$\\
$B_3$	&$-48+32 q-9 q^2+q^3$							&$1+9 t+45 t^2+174 t^3$\\
$B_4$	&$384-320 q+104 q^2-16 q^3+q^4$					&$1+16 t+138 t^2+820 t^3+3106 t^4$\\
\multirow{2}{*}{$B_5$}	&$-3840+3104 q-1160 q^2+240 q^3$	&$1+25 t+310 t^2+2530 t^3+15365 t^4$\\
 		&$\qquad-25 q^4+q^5$									&$\qquad+72290 t^5$\\
\hline
$C_2$	&$8-6 q+q^2$										&$1+6 t+14 t^2$\\
$C_3$	&$-48+44 q-12 q^2+q^3$							&$1+12 t+66 t^2+172 t^3$\\
$C_4$	&$384-400 q+140 q^2-20 q^3+q^4$					&$1+20 t+192 t^2+1080 t^3+3036 t^4$\\
\multirow{2}{*}{$C_5$}	&$-3840+4384 q-1800 q^2+340 q^3$	&$1+30 t+440 t^2+4040 t^3+23580 t^4$\\
		&$\quad-30 q^4+q^5$									&$\quad +69976 t^5$\\
\hline
$D_2$	&$4-4 q+q^2$										&$1+4 t+4 t^2$\\
$D_3$	&$-24+14 q-6 q^2+q^3$							&$1+6 t+18 t^2+64 t^3$\\
$D_4$	&$192-192 q+68 q^2-12 q^3+q^4$					&$1+12 t+84 t^2+432 t^3+1272 t^4$\\
\multirow{2}{*}{$D_5$}	&$-1920+1504 q-640 q^2+160 q^3$	&$1+20 t+200 t^2+1320 t^3+6700 t^4$\\
		&$\quad-20 q^4+q^5$									&$\quad+31488 t^5$\\
\hline
\end{tabular}

\medskip

Recall that the leading coefficient of the Ehrhart polynomial $E_{Z(A)}(t)$ is the lattice volume of the zonotope $Z(A)$. \cite{EC1} In type $A$ it is well known \cite{Stanleyzonotope} that the $\ZZ^n$ lattice volume of $Z(A_n)$ is $n^{n-2}$, so its weight lattice volume is $n^{n-1}$, as the examples show. 

%
%
%

\section{\textsf{Acknowledgments}}
The authors would like to thank Petter Br\"anden, Luca Moci, and Monica Vazirani for enlightening discussions on this subject.
Part of this work was completed while the first author was on sabbatical leave at the University of California, Berkeley; he would like to thank Lauren Williams, Bernd Sturmfels, and the mathematics department for their hospitality.

\bibliographystyle{plain}

\small

\end{document}